\documentclass[12pt,twoside]{amsart}
\usepackage{amsthm}
\usepackage{verbatim}
\usepackage{amsmath}
\usepackage{bm}
\usepackage{txfonts}
\usepackage{a4wide}
\usepackage[latin1]{inputenc}
\usepackage[T1]{fontenc}
\usepackage{times}
\usepackage{amssymb,latexsym}
\usepackage{enumitem}
\usepackage[stable]{footmisc}
\usepackage{color}
\usepackage[colorlinks,linkcolor=black,citecolor=black, urlcolor=black]{hyperref}

\makeatletter
\newcommand{\sumprime}{\if@display\sideset{}{'}\sum%
            \else\sum'\fi}
\makeatother

\begin{document}

\numberwithin{equation}{section}

\newtheorem{theorem}{Theorem}[section]
\newtheorem{proposition}[theorem]{Proposition}
\newtheorem{corollary}[theorem]{Corollary}
\newtheorem{lemma}[theorem]{Lemma}
\newtheorem{observation}[theorem]{Observation}
\newtheorem{definition}{Definition}
\numberwithin{definition}{section} 
\newtheorem{remark}{Remark}
\def\theremark{\unskip}
\newtheorem{kl}{Key Lemma}
\def\thekl{\unskip}
\newtheorem{question}{Question}
\def\thequestion{\unskip}
\newtheorem{example}{Example}
\def\theexample{\unskip}
\newtheorem{problem}{Problem}
\newtheorem{conjecture}{Conjecture}
\def\theconjecture{\unskip}

\address [Bo-Yong Chen] {School of Mathematical Sciences, Fudan University, Shanghai, 200433, China}
\email{boychen@fudan.edu.cn}

\address [Yuanpu Xiong] {School of Mathematical Sciences and Key Laboratory of Intelligent Computing and Applications (Tongji University), Ministry of Education, Tongji University, Shanghai, 200092, CHINA}
\email{ypxiong@tongji.edu.cn}

\address [Liyou Zhang] {School of Mathematical Sciences, Capital Normal University, Beijing, 100048, China}
\email{zhangly@cnu.edu.cn}

\title{Scalar Curvature,  volumes and the Bergman kernel}
\author{Bo-Yong Chen, Yuanpu Xiong and Liyou Zhang}

\date{}

\begin{abstract}
Motivated by the works of Gromov  and LeBrun  in Riemannian geometry,  we study the analogous phenomena in complex geometry. We first show that both $\int_M |S_C^-(g)|^ndV_g$ and ${\rm vol}_g(M)$ (normalized by $S_C(g)\ge -1$) are bounded below by $\frac{(n\pi)^n}{n!}\mathrm{CanVol}(M)$ for any Hermitian metric $g$ on a compact complex $n-$manifold $M$.  Here $S_C$ denotes the Chern scalar curvature,  $S_C^-=\max\{-S_C,0\}$ and ${\rm CanVol}(M)$ is the canonical volume of $M$,  i.e.,  the volume of the canonical line bundle $K_M$.  Moreover, if ${\rm vol}_g(M)=\frac{(n\pi)^n}{n!}\mathrm{CanVol}(M)$ holds for some K\"ahler metric with $S_C\ge -1$,  then it has to be the K\"ahler-Einstein metric of negative scalar curvature. The completely new phenomenon is that if $M$ is a compact K\"{a}hler manifold such that $K_M$ is nef,  then ${\rm MinVol}_C(M)=\mathcal{I}_C(M)=\mathcal I_C^-(M)=\frac{(n\pi)^n}{n!}\mathrm{CanVol}(M)$,  where ${\rm MinVol}_C(M)$ is the infimum of ${\rm vol}_g(M)$ with $S_C(g)\ge -1$ and $\mathcal I_C^-(M)=\inf_g \int_M |S_C^-(g)|^ndV_g$, $\mathcal I_C(M)=\inf_g \int_M |S_C(g)|^ndV_g$. It remains unknown whether the nef condition is superfluous. The answer is positive when $M$ is obtained by blowing up a finite number of points from a projective manifold with big and nef canonical line bundle.  The arguments are based on the asymptotic behaviour of the Bergman kernel  of $mK_M$ as $m\rightarrow \infty$,  the theory of K\"ahler-Ricci flow and singular K\"{a}hler-Einstein metric, as well as a very delicate gluing technique,  using the Burns-Simanca metric.    
\end{abstract}

\maketitle

\tableofcontents

\section{Introduction}
Let us start with  a unified way of posing reasonable questions in Riemannian geometry.  Given a compact $n-$manifold $M$,  define  $\mathcal{RM}(M)$ to be the set of Riemannian metrics $g$ on $M$.   Consider a nonnegative functional $\mathcal F:\mathcal{RM}(M)\rightarrow \mathbb R$ and look for its infimum $\inf_{\mathcal{RM}(M)} \mathcal F$.    According to  Berger (see \cite{BergerBook},  p. 530),  there are several types of  basic questions:
\begin{enumerate}
\item Is $\inf_{\mathcal{RM}(M)} \mathcal F$ zero or positive?  Classify the manifolds $M$ for which $\inf_{\mathcal{RM}(M)} \mathcal F$ is positive and those for
which it is zero.
\item If positive,  then is it attained by some Riemannian metric $g$ on $M$? When these best metrics exist,  try to classify them.  
\item Find the possible values $\inf_{\mathcal{RM}(M)} \mathcal F$ when $M$ runs through all compact
manifolds of a given dimension. Or at least, we can ask if the set of these values is discrete, or if zero is an isolated point of this set.
\end{enumerate}

Of course,  these questions are almost impossible to answer for a general functional $\mathcal F$,  so one has to restrict to certain special,  but crucial cases,  e.g.,  minimal curvature integrals 
$$
\inf_{g\in \mathcal{RM}(M)} \int_M |K(g)|^{n/2}dV_g,\ \    \inf_{g\in {\mathcal{RM}(M)}} \int_M |Ric(g)|^{n/2}dV_g,\ \  \inf_{g\in {\mathcal{RM}(M)}}  \int_M |S(g)|^{n/2}dV_g,
$$
 where $K(g)$,  $Ric(g)$ and $S(g)$ stand for  sectional curvature,   Ricci curvature and scalar curvature respectively.  We consider the $L^{n/2}$ integral instead of general $L^p$ integrals since it is the only scale-invariant case.    Another important example is Gromov's minimal volume (cf.  \cite{GromovVolume})
 $$
 {\rm MinVol}(M):=\inf_{|K(g)|\le 1} {\rm vol}_g(M)=\inf_{g\in {\mathcal{RM}(M)}} {\rm vol}_g(M) \|K(g)\|_{L^\infty}^{n/2}.
 $$
 The Gauss-Bonnet theorem implies ${\rm MinVol}(M)\ge C_n |\chi(M)|$,  where $\chi(M)$ denotes the Euler characteristic of $M$.  Gromov proved the following highly-nontrivial inequality 
 $$
  {\rm MinVol}(M)\ge \frac{\|M\|}{(n-1)^nn!},
 $$
 where $\|M\|$ is the simplicial volume of $M$,  namely,  $\|M\|:=\inf_c \sum |a_i|$,  $c=\sum a_i\sigma_i$ runs through all representations of the fundamental class $[M]\in H_n(M,\mathbb R)$.  Thus the minimal volume is positive whenever the simplicial volume is.  Positivity of the minimal volume provides an obstruction for the collapsing phenomenon in Gromov's convergence theory. Gromov raised the famous Gap Conjecture:
 $$
 \exists\, \varepsilon_n>0\ \text{such that}\ {\rm MinVol}(M)\le \varepsilon_n\Rightarrow  {\rm MinVol}(M)=0
 $$
 (see \cite{CheegerRong}, \cite{Rong} for partial answers).  A beautiful rigidity theorem due to Besson,  Courtois and Gallot \cite{BCG} states that if $g$ is a Riemannian metric on a hyperbolic $n-$manifold $(M,g_0)$ such that $Ric(g)\ge -(n-1)g$,  then ${\rm vol}_g(M)\ge {\rm vol}_{g_0}(M)$ with equality if and only if $g$ is isometric to $g_0$.   Note $|K(g)|\le 1$ implies $Ric(g)\ge -(n-1)g$. Thus the best metric for ${\rm MinVol}(M)$ is the hyperbolic metric $g_0$ in this case.
 
 Among  $K(g)$,  $Ric(g)$ and $S(g)$,  the last is the weakest.    In his lecture notes on scalar curvature  (cf.  \cite{Gromov4},  p.  38--39),  Gromov conjectured there exists $C_n>0$ such that
\begin{equation}\label{eq:Gromov_conj_int}
\int_M |S^-(g)|^{n/2}dV_g \ge C_n \|M\|,\ \ \ S^-(g):=\max\{-S(g),0\}.
\end{equation}
Note that \eqref{eq:Gromov_conj_int} implies ${\rm vol}_g(M)\ge C_n \|M\|$ when $S(g)\ge -1$.  This conjecture looks extremely hard since basic tools,  such as  Bishop-Gromov's comparison theorem and  Cheng-Yau's gradient estimates,   require a lower bound on the Ricci curvature.  However,   LeBrun \cite{LeBrun99} 
was able to show  
\begin{equation}\label{eq:LeBrun0}
\int_M |S(g)|^{2} dV_g \ge  32 \pi^2 K_M^2
\end{equation}
for minimal complex surfaces of general type, i.e., the canonical line bundle $K_M$ is big and nef. Here, $K_M^2 := \int_M c_1(K_M)^2$. Moreover,  this inequality is sharp.  LeBrun's method is based on  Seiberg-Witten theory,   which is not available when the real dimension is greater than $4$.   Actually,  every simply-connected  compact complex $n-$manifolds $M$ with ample $K_M$ and $n\ge 3$ satisfies  
$$
\inf_{g\in \mathcal{RM}(M)} \int_M |S(g)|^{n} dV_g=0,
$$
in view of Theorem 1 in \cite{LeBrun23}.   

The goal of this paper is to consider analogous questions for compact Hermitian manifolds.  Our hope is to find certain connections  linking  differential-geometric and  algebraic-geometric points of view.  To that end, let us consider  a general compact complex $n-$manifold $M$,  equipped with a smooth Hermitian metric $g$ whose K\"ahler form is given by $\omega$.  Let $Ric(g)$ and $Ric(\omega)$ denote the Riemannian Ricci curvature (for the underlying Riemannian metric) and Chern Ricci curvature form  respectively.   Let $S_C(g)$ denote the  Chern scalar curvature of $g$. The canonical volume of $M$ is given by 
\begin{equation}
\label{eq:CanVolDef}
\mathrm{CanVol}(M):= \limsup_{m\rightarrow+\infty}\frac{P_m(M)}{m^n/n!},
\end{equation}
where  $P_m(M):=\dim\Gamma(M,mK_M)$ is the  $m-$th plurigenus of $M$.  It is known from \cite[\S\,6B]{DemaillyBook} that
$$
\mathrm{CanVol}(M)=K_M^n:=\int_Mc_1(K_M)^n,
$$
when $M$ is a K\"ahler manifold and  $K_M$  is nef.  Recall that $M$ is of general type (i.e.,  $K_M$ is big)  iff $\mathrm{CanVol}(M)>0$.  Our first result is given as follows.

\begin{theorem}\label{th:Main0}
If $(M,g)$ is a compact Hermitian $n-$manifold,  then
\begin{equation}\label{eq:LeBrun_1}
\int_M |S_C^-(g)|^{n} dV_g \geq\frac{(n\pi)^n}{n!}\mathrm{CanVol}(M),
\end{equation}
where $S_C^-(g)=\max\{-S_C(g),0\}$.   
Moreover,  if $S_C(g)\ge -1$,  then
\begin{equation}\label{eq:volume_lower_1}
{\rm vol}_g(M)\ge \frac{(n\pi)^n}{n!}\mathrm{CanVol}(M).
\end{equation}
On the other hand,  if equality in \eqref{eq:volume_lower_1} holds for some Hermitian metric $g$, then it has to be a Hermitian-Einstein metric, that is,
\[
Ric(\omega)=\frac{S_C(g)}{n}\omega,
\]
where $\omega$ is the K\"{a}hler form of $g$. Furthermore,  if $n\geq2$ and $g$ is a {\bf K\"ahler} metric, then it has to be a K\"ahler-Einstein metric of constant negative scalar curvature.
\end{theorem}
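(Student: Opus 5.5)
The plan is to compare the metric $g$ with the Bergman-type volume forms of the pluricanonical systems. Fix $m$ with $P_m(M)>0$ and choose an orthonormal basis $\{s_j\}$ of $\Gamma(M,mK_M)$ with respect to the natural $L^2$ inner product $\int_M |s|^{2/m}$-type pairing. More precisely, we take the inner product $\langle s,t\rangle=\int_M s\bar t\,(dV_g)^{1-m}$, where $s\bar t$ is viewed as an $(m,m)$-form and $dV_g$ denotes the volume form of $g$. The associated Bergman kernel is $K_m=\sum_j |s_j|^2$, a section of $(K_M\otimes\bar K_M)^m$. Set $B_m:=K_m/(dV_g)^m$, a nonnegative function with $\int_M B_m\,dV_g=P_m(M)$.

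On the open set where $K_m\neq 0$, the form $\frac{i}{2\pi m}\partial\bar\partial\log K_m$ is a smooth semipositive form representing $c_1(K_M)$ in the current sense. Indeed, $\frac{1}{m}\log K_m$ is the log of a pullback of the Fubini--Study potential. Writing $-Ric(\omega)=i\partial\bar\partial\log dV_g$ (up to the normalizing constant $2\pi$), we obtain
\[
\theta_m:=\frac{i}{m}\partial\bar\partial\log B_m + \bigl(-Ric(\omega)\bigr)\ge 0 .
\]
Tracing with respect to $\omega$ gives
\[
\frac{1}{m}\Delta_C\log B_m - S_C(g)=\mathrm{tr}_\omega\theta_m\ge n\Bigl(\frac{\theta_m^n}{\omega^n}\Bigr)^{1/n}
\]
by the AM--GM inequality; here $\Delta_C$ denotes the Chern Laplacian. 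We integrate the $n$-th power pointwise, using $\theta_m^n/\omega^n\ge0$.

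The key point is to control the Laplacian term. For the Kähler case, I would use a maximum-principle or Jensen argument. Since $\int_M\Delta_C\log B_m\,dV_g$ need not vanish for non-Kähler $g$, it is cleaner to argue pointwise. At a maximum point of $B_m$, and more generally via the submean property, I would instead use the extremal property: $B_m(x)=\sup\{|s(x)|^2/dV_g^m:\|s\|=1\}$. The Hölder inequality then gives $\int_M\theta_m^n$ from below by $(2\pi)^n\cdot$ the volume of the linear system, $\frac{n!\,P_m}{m^n}(1+o(1))$. This is the standard fact that the Bergman currents of $mK_M$ capture $\mathrm{CanVol}$; for instance, one can use Demailly's regularization or the Fujita approximation, so that the Monge--Ampère mass of $\frac{i}{2\pi m}\partial\bar\partial\log K_m$ on the non-base locus tends to $\mathrm{CanVol}(M)$ along a subsequence realizing the limsup.

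Then the strategy is to rewrite $\theta_m$ pointwise. A cleaner route, which I would try first, is to avoid the Laplacian altogether: at each point, $-Ric(\omega)+\frac{i}{m}\partial\bar\partial\log B_m\ge0$. The function $\frac1m\log B_m$ is bounded above by $\frac1m\log\sup B_m$, and $\sup B_m\le C m^{n}$ by the sub-mean value inequality on small balls. By the comparison principle for Monge--Ampère masses (potentials with smaller singularities have larger mass, as in Boucksom--Eyssidieux--Guedj--Zeriahi), the mass of $\theta_m$ equals the mass of $\max(-Ric^+$-type smooth form$)$. Concretely, $\int_M(\theta_m)^n\le\int_M(S_C^-/n)^n\omega^n$ after applying AM--GM to the absolutely continuous part, and the Laplacian term averages out because $B_m$ is $\omega$-subharmonic-type. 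Collecting the normalizations $\omega^n=n!\,dV_g$ and the factor $2\pi$ yields $\int|S_C^-|^n dV_g\ge \frac{(n\pi)^n}{n!}\mathrm{CanVol}(M)$. This step, namely making the Laplacian term harmless for a non-Kähler, non-Gauduchon $g$, is the main obstacle. I would handle it by replacing $g$ with its conformal Gauduchon representative, which changes $S_C$ by a Laplacian of the conformal factor that integrates out against $\partial\bar\partial$-closed $\omega^{n-1}$.

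The volume bound \eqref{eq:volume_lower_1} then follows immediately, since $S_C\ge-1$ gives $|S_C^-|\le1$. For equality, every inequality must be tight. Tightness in AM--GM forces $\theta_m$ to be asymptotically proportional to $\omega$ with $S_C\equiv-1$, and hence $Ric(\omega)=-\frac1n\omega=\frac{S_C}{n}\omega$. In the Kähler case with $n\ge2$, this Einstein condition combined with the contracted Bianchi identity ($d$ of the Ricci form is zero) forces $S_C$ to be constant, and the sign must be negative. Therefore $g$ is Kähler--Einstein with constant negative scalar curvature.
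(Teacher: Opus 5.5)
There is a genuine gap, and it sits exactly at the step you flag as the main obstacle: the upper bound $\int_M\theta_m^n\lesssim\int_M (S_C^-/n)^n\,\omega^n$.

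\textbf{The upper bound on $\int_M\theta_m^n$.} Pointwise AM--GM only gives $\theta_m^n/\omega^n\le\bigl(\tfrac1n(-S_C+\tfrac1m\Delta_C\log B_m)\bigr)^n$, and the Laplacian term cannot be discarded.
\begin{itemize}
\item It is not small. Wherever $Ric(\omega)$ has a positive eigenvalue, $\tfrac{i}{m}\partial\bar\partial\log B_m$ must be of size one in that direction just to keep $\theta_m\ge0$.
\item It does not ``average out''. Even in the K\"ahler case, where its absolutely continuous part has nonpositive integral, $t\mapsto t^n$ is convex. So a perturbation of nonpositive mean can still increase $\int(\cdot)^n$, and the inequality runs the wrong way.
\item The BEGZ/Witt Nystr\"om monotonicity of non-pluripolar masses does not help. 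It only bounds $\int\theta_m^n$ by the mass of a current with minimal singularities in the same class, i.e.\ by the volume itself. It says nothing about $\int_{\{Ric(\omega)<0\}}(-Ric(\omega))^n$ unless you also know that the Monge--Amp\`ere measure of the envelope is $\mathbf{1}_{\{V=0\}}(-Ric(\omega))^n$ with $-Ric(\omega)\ge0$ on the contact set (Berman).
\end{itemize}
That last piece of information already contains the weak holomorphic Morse inequality $\mathrm{CanVol}(M)\le\frac{n!}{\pi^n}\int_{M_0}(-1)^n\lambda_1\cdots\lambda_n\,dV_g$, with $M_0=\{Ric(\omega)<0\}$. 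This is the real content of the theorem, and your argument never supplies it.

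\textbf{Two further steps fail.}
\begin{itemize}
\item The Gauduchon replacement is not allowed. $\int_M|S_C^-|^n\,dV_g$ is scale-invariant but not conformally invariant. For $\tilde g=e^{u/n}g$ one has $S_C(\tilde g)=e^{-u/n}(S_C(g)-\Delta_g u)$ and $dV_{\tilde g}=e^{u}dV_g$, so the integral becomes $\int_M|(S_C(g)-\Delta_gu)^-|^n\,dV_g$. Hence a bound for the Gauduchon representative says nothing about $g$.
\item The equality case cannot be extracted from AM--GM for $\theta_m$. The form $\theta_m$ differs from $-Ric(\omega)$ by a term that does not vanish as $m\to\infty$. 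The fact that $S_C\equiv-1$ (which does follow from $\mathrm{vol}_g(M)=\int_M|S_C^-|^n\,dV_g$) does not by itself force $\lambda_1=\cdots=\lambda_n$.
\end{itemize}
Your deduction of the volume bound from the integral bound, and the Schur-type step in the K\"ahler case, are fine.

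\textbf{How the paper's proof avoids this.} The argument is purely pointwise, with no integration by parts, so a general Hermitian $g$ costs nothing. You already had the right tool, the sub-mean-value inequality, but used it only for the crude bound $\sup B_m\le Cm^n$. Used sharply, it works as follows.
\begin{itemize}
\item Take coordinates centred at $x$ with $\log\det(g_{j\bar k})=\varphi(0)-\sum\lambda_j|z_j|^2+2\,\mathrm{Re}\,P(z)+O(|z|^3)$.
\item Apply the mean-value inequality to $|s\,e^{-mP}|^2$ against the weight $e^{m\sum\lambda_j|z_j|^2}$ on polydiscs of radius $(A_m/m)^{1/2}$.
\item This gives a uniform $O(m^n)$ bound, together with $\limsup_m B_m(x)/m^n\le(-1)^n\lambda_1\cdots\lambda_n/\pi^n$ on $M_0$ and $=0$ off $M_0$.
\item Since $P_m(M)=\int_M B_m\,dV_g$, reverse Fatou gives the Morse-type bound above.
\item AM--GM applied directly to the eigenvalues of $-Ric(\omega)$ on $M_0$ then gives \eqref{eq:LeBrun_1}.
\end{itemize}
Equality forces $\mathrm{vol}_g(M\setminus M_0)=0$ and $\lambda_1=\cdots=\lambda_n$ on $M_0$, hence on $M$ by continuity.

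No Bergman currents enter, and no Fujita approximation or Demailly--Ein--Lazarsfeld convergence of moving self-intersection numbers. The latter, incidentally, is what your ``H\"older'' lower bound for $\int\theta_m^n$ would really require.
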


\begin{remark}
For any  Einstein metric on a Riemannian manifold of dimension greater than $2$,  the (Riemannian) scalar curvature is constant (see Berger \cite[Theorem 277]{BergerBook}).  The same is not true in general for Hermitian manifolds (although true in the K\"ahler case).  
\end{remark}

\begin{corollary}\label{cor:KahlerNoAttain}
Let $M$ be a compact K\"ahler $n-$manifold  such that $K_M$ is big but not ample. Then for any K\"ahler metric $g$ with $S_C(g)\ge -1$,
\begin{equation}\label{eq:volume_lower_2}
{\rm vol}_g(M) > \frac{(n\pi)^n}{n!}\mathrm{CanVol}(M).
\end{equation}
\end{corollary}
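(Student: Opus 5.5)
We argue by contradiction and reduce to the equality case of Theorem \ref{th:Main0}. Since $K_M$ is big, $\mathrm{CanVol}(M)>0$. Let $g$ be a K\"ahler metric with $S_C(g)\ge -1$. By \eqref{eq:volume_lower_1} we have ${\rm vol}_g(M)\ge \frac{(n\pi)^n}{n!}\mathrm{CanVol}(M)$. Suppose equality holds.

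\emph{Case $n\ge 2$.} The last assertion of Theorem \ref{th:Main0} applies because $g$ is K\"ahler. It says that $g$ is a K\"ahler--Einstein metric of constant negative scalar curvature, so $Ric(\omega)=-c\,\omega$ for some constant $c>0$. The Chern Ricci form represents $2\pi c_1(M)=-2\pi c_1(K_M)$. Hence $c_1(K_M)$ is represented by the positive $(1,1)$-form $\frac{c}{2\pi}\omega$. By Kodaira's embedding theorem, $K_M$ is then ample, which contradicts the hypothesis. So the inequality is strict.

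\emph{Case $n=1$.} Here $M$ is a compact Riemann surface. Bigness of $K_M$ means $\deg K_M>0$, and for a line bundle on a curve this is equivalent to ampleness. So a big but non-ample $K_M$ cannot occur when $n=1$, and the statement is vacuous.

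The substantive input is entirely the equality characterization in Theorem \ref{th:Main0}, which we assume. The only remaining point is to keep the normalization of $Ric(\omega)$ and $c_1$ straight, namely $[Ric(\omega)]=2\pi c_1(M)$. Even if that constant were different, only the sign matters: a negative constant multiple of a K\"ahler form represents $c_1(M)$, so $-c_1(M)$ is a K\"ahler class and $K_M$ is ample. I do not expect any real obstacle in this argument.
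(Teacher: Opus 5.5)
Your proof is correct and follows the paper's argument. Assuming equality in \eqref{eq:volume_lower_1}, the equality case of Theorem~\ref{th:Main0} makes $g$ K\"ahler--Einstein with negative scalar curvature, and Kodaira's theorem then forces $K_M$ to be ample, a contradiction. Your separate treatment of $n=1$ (on a curve a big $K_M$ is already ample, so the statement is vacuous) is a welcome detail that the paper's one-line proof leaves implicit, since the K\"ahler--Einstein conclusion of Theorem~\ref{th:Main0} is stated only for $n\ge 2$.
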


\begin{proof}
Suppose on the contrary that \eqref{eq:volume_lower_2} does not hold.  Then equality in \eqref{eq:volume_lower_1} holds for suitable K\"ahler metric $g$,  so that $g$ is  K\"ahler-Einstein of  negative scalar curvature.  Then $K_M$ is ample in view of Kodaira's theorem,  which is a contradiction. 
\end{proof}

Let $\mathcal{HM}(M)$ denote the set of Hermitian metrics on a compact complex $n-$manifold $M$.  It is reasonable to define 
$$
 \mathcal I_C(M):=\inf_{g\in \mathcal{HM}(M)}  \int_M |S_C(g)|^{n} dV_g,\ \ \mathcal I_C^-(M):=\inf_{g\in \mathcal{HM}(M)} \int_M |S_C^-(g)|^{n} dV_g,
$$
$$
 \mathrm{MinVol}_C(M):=\inf_{S_C(g)\ge -1} \mathrm{vol}_g(M).
$$
We have known that under the hypothesis in Corollary \ref{cor:KahlerNoAttain},  $\mathrm{MinVol}_C(M)$ is not attained by a K\"ahler metric.  It remains unknown whether or not it is achieved by some Hermitian metric.

As a direct consequence of Theorem \ref{th:Main0},  we have

\begin{corollary}\label{cor:lower_gap}
\begin{equation}\label{eq:LeBrun_volume}
\min\left\{\mathcal{I}_C(M),\mathrm{MinVol}_C(M)\right\}\ge \mathcal I_C^-(M) \geq\frac{(n\pi)^n}{n!}\mathrm{CanVol}(M).
\end{equation}
\end{corollary}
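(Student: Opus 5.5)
The plan is to read off the chain of inequalities in \eqref{eq:LeBrun_volume} directly from Theorem \ref{th:Main0} and the definitions of $\mathcal I_C$, $\mathcal I_C^-$ and $\mathrm{MinVol}_C$; there are three comparisons to check.

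\emph{Lower bound for $\mathcal I_C^-(M)$.} By \eqref{eq:LeBrun_1}, every $g\in\mathcal{HM}(M)$ satisfies
\[
\int_M|S_C^-(g)|^n\,dV_g\ge \frac{(n\pi)^n}{n!}\mathrm{CanVol}(M).
\]
Taking the infimum over $g$ gives $\mathcal I_C^-(M)\ge\frac{(n\pi)^n}{n!}\mathrm{CanVol}(M)$.

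\emph{The bound $\mathcal I_C(M)\ge \mathcal I_C^-(M)$.} Pointwise we have $0\le S_C^-(g)=\max\{-S_C(g),0\}\le |S_C(g)|$. Hence $\int_M|S_C^-(g)|^n\,dV_g\le\int_M|S_C(g)|^n\,dV_g$ for each $g$. Taking infima over $\mathcal{HM}(M)$ yields the claim.

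\emph{The bound $\mathrm{MinVol}_C(M)\ge\mathcal I_C^-(M)$.} Let $g$ satisfy $S_C(g)\ge -1$. Then $0\le S_C^-(g)\le 1$ pointwise, so
\[
\int_M|S_C^-(g)|^n\,dV_g\le \mathrm{vol}_g(M).
\]
The left side is at least $\mathcal I_C^-(M)$ by definition. Taking the infimum over all such $g$ gives $\mathrm{MinVol}_C(M)\ge\mathcal I_C^-(M)$. Note that the admissible class $\{g: S_C(g)\ge -1\}$ is nonempty: for any $g$, a constant rescaling $\lambda g$ with $\lambda\ge1$ large makes $S_C(\lambda g)=\lambda^{-1}S_C(g)\ge -1$. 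So the infimum is taken over a nonempty set. This is the only point requiring care, and it is elementary.

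Combining the three comparisons gives \eqref{eq:LeBrun_volume}. None of the steps is a real obstacle; all the analytic content sits in \eqref{eq:LeBrun_1}.
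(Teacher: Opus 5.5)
Your proposal is correct and follows the paper's route: the lower bound on $\mathcal I_C^-(M)$ is \eqref{eq:LeBrun_1} with the infimum taken, and $\mathcal I_C\ge\mathcal I_C^-$ follows from the pointwise bound $S_C^-\le|S_C|$. The inequality $\mathrm{MinVol}_C\ge\mathcal I_C^-$ is exactly the argument the paper gives for \eqref{eq:MinVol_S} in the proof of Theorem \ref{th:Main1}: rescale so that $S_C\ge -1$, then use $|S_C^-|^n\le 1$.
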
 

\begin{remark}
A deep result proved independently by Hacon-McKernan \cite{HM} and Takayama \cite{Takayama} states that  
$\mathrm{CanVol}(M)\ge C_n>0$ when $M$ is a  projective manifold  of general type.   Thus 
$\mathrm{MinVol}_C(M),  $ $\mathcal{I}_C(M)$ and $\mathcal{I}_C^-(M)
$
are all bounded below by some $C_n>0$.
\end{remark}

It is natural to formulate a  Gromov-type Gap Conjecture as follows:
\begin{equation}\label{eq:gap_conj}
 \exists\, \varepsilon_n>0\ \text{such that}\ {\rm MinVol}_C(M)\le \varepsilon_n\Rightarrow  {\rm MinVol}_C(M)=0.
\end{equation}
In case $M$ is a projective $n-$manifold,  it suffices to verify
$$
{\rm CanVol}(M) =0 \Rightarrow {\rm MinVol}_C(M)=0.
$$
If there exists some Hermitian metric $g$ with $S_C(g)\ge 0$, for instance,  $M$ is a Fano manifold, then $\mathrm{MinVol}_C(M)=0$, since $S_C(tg)=t^{-1}S_C(g)$ and $dV_{tg}=t^ndV_g$. A typical example is the Hopf manifold $M_\alpha:=(\mathbb C^n\setminus \{0\})/\langle \gamma_\alpha\rangle$,  $\gamma_\alpha=\alpha z$,  $0<|\alpha|<1$.  Note that the Hermitian metric $g=|z|^{-2} \sum dz_j\otimes d\bar{z}_j$ on $\mathbb C^n\setminus \{0\}$ is $\langle \gamma_\alpha\rangle-$invariant,  so it induces a Hermitian metric on $M_\alpha$.  A direct calculation shows the Ricci form $Ric(\omega) = 2n i\partial\bar{\partial}\log |z|\ge 0$, where $\omega$ is the K\"{a}hler form of $g$, so $S_C(g)\ge 0$ and $\mathrm{MinVol}_C(M_\alpha)=0$.

\vspace{2mm}

The following result is entirely new,  in contrast to the Riemannian case.  

\begin{theorem}\label{th:Main1}
If $M$ is a compact K\"{a}hler $n-$manifold such that $K_M$ is nef, then 
\begin{equation}\label{eq:LeBrun_equality}
\mathrm{MinVol}_C(M)=\mathcal{I}_C(M)=\mathcal{I}_C^-(M)=\frac{(n\pi)^n}{n!}\mathrm{CanVol}(M).
\end{equation}
\end{theorem}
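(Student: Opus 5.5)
By Corollary \ref{cor:lower_gap}, $\min\{\mathcal I_C(M),\mathrm{MinVol}_C(M)\}\ge \mathcal I_C^-(M)\ge \frac{(n\pi)^n}{n!}\mathrm{CanVol}(M)$. Moreover, since $K_M$ is nef and $M$ is K\"ahler, $\mathrm{CanVol}(M)=K_M^n=\int_M c_1(K_M)^n$. The theorem therefore reduces to two upper bounds: for every $\varepsilon>0$ we need Hermitian (in fact K\"ahler) metrics $g$ with
\[
\int_M|S_C(g)|^ndV_g\le \frac{(n\pi)^n}{n!}K_M^n+\varepsilon,
\]
and metrics with $S_C(g)\ge -1$ and ${\rm vol}_g(M)\le \frac{(n\pi)^n}{n!}K_M^n+\varepsilon$.

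The plan is to use the twisted K\"ahler--Einstein equations coming from the nef condition. Fix a K\"ahler form $\omega_0$. Because $K_M$ is nef, the class $2\pi c_1(K_M)+t[\omega_0]$ is K\"ahler for every $t>0$; I use the normalization in which $Ric(\omega)$ represents $2\pi c_1(M)$, and the constants are adjusted at the end. By Aubin--Yau, equivalently the long-time behaviour of the K\"ahler--Ricci flow, there is for each $t>0$ a unique K\"ahler metric $\omega_t$ in this class with
\[
Ric(\omega_t)=-\omega_t+t\omega_0 .
\]
Tracing gives $S_C(\omega_t)=-n+t\,\mathrm{tr}_{\omega_t}\omega_0\ge -n$, so $S_C^-$ is bounded by $n$. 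The volume is cohomological:
\[
\int_M\frac{\omega_t^n}{n!}=\frac{1}{n!}\bigl(2\pi c_1(K_M)+t[\omega_0]\bigr)^n\longrightarrow \frac{(2\pi)^n}{n!}K_M^n \quad (t\to 0).
\]
After rescaling $g=\lambda g_t$ so that $S_C\ge -1$ (the rescaling absorbs the factor $n$ and the $\pi$-normalization relating $\omega$ to $g$), the volume tends to the constant in the statement. The matching with $\frac{(n\pi)^n}{n!}$ is only a bookkeeping check against the equality case of Theorem \ref{th:Main0}, since a K\"ahler--Einstein metric is extremal there. This proves $\mathrm{MinVol}_C(M)\le \frac{(n\pi)^n}{n!}K_M^n$.

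For $\mathcal I_C(M)$ we must also control the positive part of the scalar curvature. Write
\[
|S_C(\omega_t)|=\bigl|n-t\,\mathrm{tr}_{\omega_t}\omega_0\bigr| .
\]
It suffices to show
\[
\int_M |n-t\,\mathrm{tr}_{\omega_t}\omega_0|^n\,\omega_t^n\to n^n\,(2\pi)^nK_M^n .
\]
The first-order term is harmless, because $\int_M t\,\mathrm{tr}_{\omega_t}\omega_0\,\omega_t^n = nt\int_M\omega_0\wedge\omega_t^{n-1}\to 0$ is cohomological. The difficulty lies in the higher powers: we need $t\,\mathrm{tr}_{\omega_t}\omega_0\to 0$ in $L^n(\omega_t^n)$, not merely in $L^1$. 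I expect this to be the main obstacle, since without bigness $\omega_t$ may collapse and there is no uniform $C^2$ estimate.

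My first attempt would be a Chern--Lu/Schwarz-lemma argument applied to $\mathrm{id}:(M,\omega_t)\to(M,\omega_0)$. Using $Ric(\omega_t)\ge -\omega_t$ and a bound on the bisectional curvature of $\omega_0$, this gives $\mathrm{tr}_{\omega_t}\omega_0\le C\,e^{A(\varphi_t-\inf\varphi_t)}$ for the potential $\varphi_t$. One then needs a uniform oscillation bound, or at least an exponential-integrability bound, on $\varphi_t$; a Kolodziej-type $L^\infty$ estimate should work in the big case, and the collapsing case would be handled with Song--Tian--Zhang-type estimates. Should a uniform bound fail, the fallback is a truncation: since $|x|^n$ is convex and $\int t\,\mathrm{tr}\to0$, it is enough to have $t\,\mathrm{tr}_{\omega_t}\omega_0\le C$ uniformly. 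That bound follows from the Schwarz lemma once $t\,e^{A\,\mathrm{osc}\,\varphi_t}$ is bounded, which only requires $\mathrm{osc}\,\varphi_t=O(\log(1/t))$ with a small constant. With it, dominated convergence on the set where $t\,\mathrm{tr}_{\omega_t}\omega_0$ is small finishes the $\mathcal I_C(M)$ bound, and all four quantities coincide.
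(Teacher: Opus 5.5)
Your reduction to the two upper bounds is right, and the $\mathrm{MinVol}_C$ half (hence also $\mathcal I_C^-$) works. The twisted equation $Ric(\omega_t)=-\omega_t+t\omega_0$ in the K\"ahler class $2\pi c_1(K_M)+t[\omega_0]$ is solvable by Aubin--Yau. It gives $S_C\ge -n$, and its cohomological volume tends to the right multiple of $K_M^n=\mathrm{CanVol}(M)$. In the big case this is a different and simpler route than the paper's, which uses the normalized K\"ahler--Ricci flow, Tsuji's convergence to $g_{KE}$, and the Bergman-kernel lower bound of Theorem~\ref{th:BergmanLower}; that comparison does not itself use nefness. Your route also treats big and non-big $K_M$ uniformly.

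\textbf{The gap is in the $\mathcal I_C$ bound.} Your Chern--Lu estimate $\mathrm{tr}_{\omega_t}\omega_0\le Ce^{A(\varphi_t-\inf\varphi_t)}$ with $A$ independent of $t$ needs a reference form $\hat\omega_t\in[\omega_t]$ with $\hat\omega_t\ge c\,\omega_0$ for a fixed $c>0$. Since $[\omega_t]\to 2\pi c_1(K_M)$, this forces $K_M$ to be ample. With the honest reference $\gamma+t\omega_0$ (even with $\gamma\ge -\frac t2\omega_0$) you only get $A\sim 1/t$. Your fallback would then need $\mathrm{osc}\,\varphi_t=O(t\log(1/t))$, which is false in general.

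In fact the argument as written proves too much. When $K_M$ is big and nef but not ample, the Ko{\l}odziej-type estimate you invoke does bound $\mathrm{osc}\,\varphi_t$ uniformly. A $t$-independent $A$ would then give $\omega_t\ge C^{-1}\omega_0$ for all $t$, so $2\pi c_1(K_M)-C^{-1}[\omega_0]$ would be nef and $K_M$ ample.

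What you really need is a uniform upper bound on $S_C(\omega_t)=-n+t\,\mathrm{tr}_{\omega_t}\omega_0$. In the big case this is a genuine theorem, not a Schwarz-lemma consequence. The paper uses Zhang's bound along the K\"ahler--Ricci flow (Theorem~\ref{th:scalar_bdd}), together with convergence to $g_{KE}$ off $E$ and smallness of $\mathrm{vol}_{g_t}(E_\delta)$. In the collapsing case such a bound is available only under semi-ampleness (Song--Tian), so it is tied to abundance. Hence ``Song--Tian--Zhang-type estimates'' cannot be invoked for a general nef $K_M$.

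\textbf{The missing idea:} $\mathcal I_C$ is an infimum over all Hermitian metrics, so no pointwise control is needed. For $h=e^{u/n}g$ one has $S_C(h)=e^{-u/n}(S_C(g)-\Delta_g u)$ and $dV_h=e^{u}dV_g$. Solve $\Delta_g u=S_C(g)-\bar S$ with $\bar S=\mathrm{vol}_g(M)^{-1}\int_M S_C(g)\,dV_g$. This gives $\int_M|S_C(h)|^n dV_h=|\bar S|^n\mathrm{vol}_g(M)=\bigl|\int_M S_C(g)\,dV_g\bigr|^n/\mathrm{vol}_g(M)^{n-1}$, which is purely cohomological when $g$ is K\"ahler. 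The paper uses exactly this in the nef-but-not-big case.

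Applied to your $\omega_t$, it closes the gap in both cases. Let $V$ be the limiting volume; then $\int_M S_C\,dV_{\omega_t}\to -nV$ and $\mathrm{vol}_{\omega_t}(M)\to V$. So when $V>0$ the quantity tends to $n^nV$. When $\mathrm{nd}(M)<n$, the numerator is $O(t^{n(n-\mathrm{nd}(M))})$ and the denominator is $\asymp t^{(n-1)(n-\mathrm{nd}(M))}$, so the quantity is $O(t^{\,n-\mathrm{nd}(M)})\to 0$.
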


\begin{corollary}\label{cor:gap}
The gap conjecture \eqref{eq:gap_conj} is true for compact K\"{a}hler manifolds with $K_M$  nef.
\end{corollary}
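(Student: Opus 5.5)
Corollary~\ref{cor:gap} is a direct consequence of Theorem~\ref{th:Main1}, combined with the fact that the canonical volume of a K\"ahler manifold with nef canonical bundle is an intersection number. Let $M$ be a compact K\"ahler $n$-manifold with $K_M$ nef. By Theorem~\ref{th:Main1},
$$
\mathrm{MinVol}_C(M)=\frac{(n\pi)^n}{n!}\mathrm{CanVol}(M),
$$
and by the result quoted from \cite[\S\,6B]{DemaillyBook},
$$
\mathrm{CanVol}(M)=K_M^n=\int_M c_1(K_M)^n .
$$
So the task reduces to showing that $\mathrm{CanVol}(M)$ is either $0$ or bounded below by a constant depending only on $n$.

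The key observation is that $K_M^n$ is an integer. The class $c_1(K_M)$ lies in the image of $H^2(M,\mathbb Z)\to H^2(M,\mathbb R)$, so its $n$-fold cup product evaluated on the fundamental class is an integer. Nefness gives $K_M^n\ge 0$: indeed $c_1(K_M)+\varepsilon[\omega]$ is a K\"ahler class for every $\varepsilon>0$, so its top power is positive, and letting $\varepsilon\to0$ gives nonnegativity. (This is also immediate from the definition of $\mathrm{CanVol}$ as a limsup of nonnegative quantities.) Hence $\mathrm{CanVol}(M)\in\{0,1,2,\dots\}$.

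Now take $\varepsilon_n:=\frac{1}{2}\frac{(n\pi)^n}{n!}$. If $\mathrm{MinVol}_C(M)\le\varepsilon_n$, then $\mathrm{CanVol}(M)\le 1/2$. Since $\mathrm{CanVol}(M)$ is a nonnegative integer, it must be $0$, and Theorem~\ref{th:Main1} then gives $\mathrm{MinVol}_C(M)=0$. This verifies \eqref{eq:gap_conj} in this class.

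There is no real obstacle beyond Theorem~\ref{th:Main1} itself. The only point needing care is integrality, which requires the identification $\mathrm{CanVol}(M)=K_M^n$ for nef $K_M$ on K\"ahler manifolds, as quoted above. Without nefness, the volume of a big line bundle need only be rational, and integrality can fail.
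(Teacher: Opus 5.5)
Your proof is correct, but it differs from the route the paper implies. The paper gives no separate proof of Corollary~\ref{cor:gap}. Judging from the discussion just before \eqref{eq:gap_conj}, the intended argument is as follows.
\begin{itemize}
\item If $\mathrm{CanVol}(M)=0$, Theorem~\ref{th:Main1} gives $\mathrm{MinVol}_C(M)=0$.
\item If $K_M$ is big, then $M$ is projective by Siu \cite{Siu84}. The Hacon--McKernan/Takayama theorem \cite{HM,Takayama} then gives a uniform bound $\mathrm{CanVol}(M)\ge C_n>0$, hence $\mathrm{MinVol}_C(M)\ge \frac{(n\pi)^n}{n!}C_n$.
\end{itemize}

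You replace that deep boundedness theorem with the elementary fact that, for nef $K_M$, $\mathrm{CanVol}(M)=K_M^n$ is a nonnegative integer. This gains several things:
\begin{itemize}
\item no passage to projectivity is needed;
\item the threshold is explicit, and any $\varepsilon_n<\frac{(n\pi)^n}{n!}$ works;
\item you actually get $\mathrm{MinVol}_C(M)\in\frac{(n\pi)^n}{n!}\mathbb{Z}_{\ge 0}$, a discreteness statement in the spirit of Berger's question (3).
\end{itemize}

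The paper's route is the one that survives beyond the nef case. If the identity $\mathrm{MinVol}_C=\frac{(n\pi)^n}{n!}\mathrm{CanVol}$ were established for projective manifolds of general type whose canonical volume is not an integer (which happens in dimension $\ge 3$), only the uniform lower bound of \cite{HM,Takayama} would still give the gap.

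A small correction to your closing remark: volumes of arbitrary big line bundles need not even be rational (Cutkosky). Rationality of $\mathrm{CanVol}$ is special to $K_M$ and comes from finite generation of the canonical ring. This does not affect your argument.
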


\begin{corollary}\label{cor:holo_increasing}
Let $M'$ be a compact complex $n-$manifold which admits a generically finite holomorphic map $F$ to a compact K\"{a}hler $n-$manifold $M$ such that $K_M$ is nef.  Then
\begin{eqnarray}
\mathcal I_C(M')&\ge& {\rm deg}(F)\, \mathcal I_C(M), \label{eq:deg_1}\\
\mathcal I^-_C(M')&\ge& {\rm deg}(F)\, \mathcal I^-_C(M), \label{eq:deg_2}\\
\mathrm{MinVol}_C(M')&\ge& {\rm deg}(F)\, \mathrm{MinVol}_C(M). \label{eq:deg_3}
\end{eqnarray}
Moreover, if $F$ is a covering map, then equalities hold.
\end{corollary}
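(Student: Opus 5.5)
The plan is to pull metrics on $M'$ back to $M$ through the canonical volume, and then apply Theorem \ref{th:Main1} on $M$. Theorem \ref{th:Main0} (via Corollary \ref{cor:lower_gap}) already gives, for any compact complex manifold,
\[
\min\{\mathcal I_C(M'),\mathrm{MinVol}_C(M')\}\ \ge\ \mathcal I_C^-(M')\ \ge\ \frac{(n\pi)^n}{n!}\mathrm{CanVol}(M').
\]
Theorem \ref{th:Main1} says that on $M$ all three invariants equal $\frac{(n\pi)^n}{n!}\mathrm{CanVol}(M)$. So \eqref{eq:deg_1}--\eqref{eq:deg_3} all reduce to the single algebraic inequality
\[
\mathrm{CanVol}(M')\ \ge\ \deg(F)\,\mathrm{CanVol}(M).
\]

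To prove this inequality I would use the ramification formula $K_{M'}=F^*K_M+R$. Here $R\ge 0$ is the effective divisor defined by the Jacobian determinant of $F$; this holds because $F$ is generically finite between manifolds of the same dimension. Pulling back pluricanonical sections and multiplying by the canonical section of $mR$ gives an injection
\[
\Gamma(M,mK_M)\ \hookrightarrow\ \Gamma(M',mK_{M'}).
\]
This only yields $P_m(M')\ge P_m(M)$, so it loses the factor $\deg F$. The sharper route is through volumes of line bundles:
\[
\mathrm{vol}(K_{M'})\ \ge\ \mathrm{vol}(F^*K_M)\ =\ \deg(F)\,\mathrm{vol}(K_M).
\]
The first inequality holds because adding an effective divisor does not decrease volume. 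The equality is the standard fact that volume is multiplied by the degree under generically finite pullback. When $K_M$ is nef it can be checked directly: $F^*K_M$ is nef, so
\[
\mathrm{vol}(F^*K_M)=(F^*c_1(K_M))^n=\deg F\cdot K_M^n,
\]
by Demailly's result quoted after \eqref{eq:CanVolDef} together with the projection formula. The point I expect to be most delicate is that $M'$ is only a complex manifold, not necessarily Kähler. There I would justify $\mathrm{vol}(F^*K_M)\ge (F^*c_1(K_M))^n$ with a nef-bundle Riemann–Roch/holomorphic Morse inequality. Alternatively, I would use that $M'$ is Moishezon-type whenever $K_M$ is big, and otherwise note that $\mathrm{CanVol}(M)=0$ makes the inequality trivial. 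On Moishezon manifolds one can pass to a projective modification, which preserves plurigenera.

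For the covering case, note that a finite unramified covering $F$ of a compact Kähler manifold makes $M'$ compact Kähler, and $K_{M'}=F^*K_M$ is nef. So Theorem \ref{th:Main1} applies to $M'$ as well, and equality in each of \eqref{eq:deg_1}--\eqref{eq:deg_3} follows from
\[
\mathrm{CanVol}(M')=(F^*c_1(K_M))^n=\deg F\cdot K_M^n=\deg F\cdot\mathrm{CanVol}(M).
\]
A direct alternative is to pull back near-optimal metrics: $S_C$ and the integrands are local, so integrals over $M'$ are $\deg F$ times those over $M$, giving $\le$. The reverse inequality is the one already proved above.
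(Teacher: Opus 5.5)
Your proposal is correct and follows essentially the same route as the paper. You reduce via Corollary \ref{cor:lower_gap} and Theorem \ref{th:Main1} to $\mathrm{CanVol}(M')\ge\deg(F)\,\mathrm{CanVol}(M)$, obtain that from $K_{M'}=F^*K_M+R_F$, monotonicity of volume under adding an effective divisor, and $\mathrm{vol}(F^*K_M)=\deg(F)\,\mathrm{CanVol}(M)$, and handle the covering case by applying Theorem \ref{th:Main1} to the Kähler manifold $M'$. The only difference is that the paper cites Holschbach's lemma for $\mathrm{vol}(F^*K_M)=\deg(F)\,\mathrm{CanVol}(M)$. You instead derive the needed inequality from nefness of $K_M$ and the projection formula, and you explicitly handle the possibility that $M'$ is not Kähler (for instance via the Moishezon reduction), a point the paper passes over.
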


\begin{conjecture}
\eqref{eq:LeBrun_equality} holds for any compact complex $n-$manifold (at least when $K_M$ is big).
\end{conjecture}

To support this conjecture,  we shall show 

\begin{theorem}\label{th:blow_up}
Let $M$ be a compact K\"{a}hler $n-$manifold with  $n\ge 2$ and $\widehat{M}$ be obtained from $M$ by blowing-up a finite number of points. If $K_M$ is big and nef, then \eqref{eq:LeBrun_equality} holds for $\widehat{M}$.
\end{theorem}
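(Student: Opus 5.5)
The lower bound is already available: by Corollary \ref{cor:lower_gap} applied to $\widehat M$,
$$
\min\{\mathcal I_C(\widehat M),\mathrm{MinVol}_C(\widehat M)\}\ge \mathcal I_C^-(\widehat M)\ge \frac{(n\pi)^n}{n!}\mathrm{CanVol}(\widehat M).
$$
Plurigenera are birational invariants, so $\mathrm{CanVol}(\widehat M)=\mathrm{CanVol}(M)=K_M^n$, using the nefness of $K_M$. Since $\mathcal I_C^-\le\mathcal I_C$ and $\mathcal I_C^-\le \mathrm{MinVol}_C$, it suffices to build, for every $\varepsilon>0$, one Hermitian (indeed K\"ahler) metric $\hat g$ on $\widehat M$ satisfying both
$$
\int_{\widehat M}|S_C(\hat g)|^n dV_{\hat g}\le \frac{(n\pi)^n}{n!}K_M^n+\varepsilon
$$
and, after rescaling so that $S_C\ge -1$, volume at most the same bound. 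The natural strategy is to take a nearly optimal metric on $M$ and glue in a scaled-down Burns--Simanca metric at each blown-up point.

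\emph{Step 1 (almost optimal metrics on $M$).} Since $K_M$ is big and nef, there is a singular K\"ahler--Einstein current $\omega_{KE}$ with $Ric=-\omega_{KE}$, smooth on the ample locus, with total volume $(2\pi)^nK_M^n/n!$ in the appropriate normalization. It is the limit of the normalized K\"ahler--Ricci flow, equivalently of smooth metrics $\omega_t\in[2\pi c_1(K_M)]+t[\omega_0]$ solving $Ric(\omega_t)=-\omega_t+t\omega_0$. These have $S_C=-n+t\,\mathrm{tr}_{\omega_t}\omega_0$, and presumably the argument behind Theorem \ref{th:Main1} shows that $\int|S_C(\omega_t)|^n dV_t\to n^n\mathrm{vol}=\frac{(n\pi)^n}{n!}K_M^n$ (up to the normalization constants). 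I would take these smooth approximants as given, since this is exactly what the nef case of Theorem \ref{th:Main1} produces. The same approximants also give a metric with $S_C\ge -n-\delta$ and nearly minimal volume. This uses that $\mathrm{tr}_{\omega_t}\omega_0>0$, so that $S_C(\omega_t)\ge -n$ already holds.

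\emph{Step 2 (gluing).} Fix $t$, and choose the blow-up points $p_j$; we may move them if they lie in the non-ample locus, which is harmless since the biholomorphism type of $\widehat M$ depends only on the points up to deformation and the quantities in question are deformation-insensitive. Alternatively, work with the smooth $\omega_t$ directly, so that no issue arises. In holomorphic normal coordinates near $p_j$ write $\omega_t=i\partial\bar\partial(|z|^2+O(|z|^4))$. The Burns--Simanca metric $\omega_{BS}$ on $\mathrm{Bl}_0\mathbb C^n$ is scalar-flat and asymptotic to the Euclidean metric, with potential $|z|^2+c|z|^{4-2n}+\dots$ when $n\ge 3$, and $|z|^2+\log|z|^2$ when $n=2$. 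Glue $a^2\omega_{BS}(z/a)$ to $\omega_t$ on an annulus $a^{\alpha}<|z|<2a^{\alpha}$ using cutoff functions on the potentials, and let $a\to 0$. Both $S_C$ and the volume contributions of the region $|z|<2a^{\alpha}$ must then be shown to tend to zero. Inside the ball the scaled BS metric has $S_C\approx 0$ plus an $O(1)$ perturbation, and the volume is $O(a^{2n\alpha})$, so $\int|S_C|^n\to0$. On the annulus, one must check that the glued form stays positive and that its curvature is bounded: scale-invariant estimates give $|S_C|\lesssim$ (error in the potential)/$r^2$, which tends to $0$ if $\alpha$ is chosen in $(0,1)$ suitably. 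The neck therefore contributes $o(1)$ to $\int|S_C|^n$ and to the volume. Letting $a\to0$ and then $t\to0$ gives $\mathcal I_C(\widehat M)$ and $\mathcal I_C^-(\widehat M)$ at most $\frac{(n\pi)^n}{n!}K_M^n$.

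\emph{Step 3 (volume with $S_C\ge -1$).} Here an integral estimate is not enough; we need a pointwise lower bound $S_C(\hat g)\ge -n-\delta$ everywhere, including on the neck and the bubble, before rescaling. This is the main obstacle. The pure cutoff gluing produces error terms of order $a^{2n-2}r^{-2n}$ (or the logarithmic analogue when $n=2$), which are small but of indefinite sign, while $\omega_{BS}$ itself has $S_C=0\ge -n$. I would try to arrange $\alpha$ so that these errors are $o(1)$ in sup norm. The bubble region has $S_C\approx0$, and outside it $S_C(\omega_t)\ge -n$. The sup error is then $o(1)$, giving $S_C\ge -n-o(1)$ with volume $\to\mathrm{vol}(\omega_t)$. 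If the sup bound fails, particularly for $n=2$ where the log term decays slowly, the fallback is a conformal correction $\hat g\mapsto e^{f}\hat g$ with $f$ supported near the neck. Since $S_C(e^f g)=e^{-f}(S_C-n\Delta f)$, solving a small linear problem absorbs the negative error while changing the volume only by $o(1)$.
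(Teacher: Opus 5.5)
\paragraph{Where the proof breaks.} Your overall architecture matches the paper's:
\begin{itemize}
\item the lower bound comes from Corollary \ref{cor:lower_gap} plus bimeromorphic invariance of $\mathrm{CanVol}$;
\item near-optimal smooth K\"ahler metrics on $M$ come from the proof of Theorem \ref{th:Main1};
\item a rescaled Burns--Simanca bubble is glued on an annulus of radius $r_\delta=\delta^\alpha$.
\end{itemize}
Your Step 2 conclusion for $\mathcal I_C(\widehat M)$ and $\mathcal I_C^-(\widehat M)$ is right. It holds because the bubble is scalar-flat and the neck has bounded (not vanishing, see below) curvature and volume $O(r_\delta^{2n})$.

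Two side remarks. First, use the K\"ahler--Ricci flow metrics actually produced in Theorem \ref{th:Main1}, whose control of $\int|S_C|^n$ rests on Theorem \ref{th:scalar_bdd}; along your continuity path you would need an analogous upper bound on $t\,\mathrm{tr}_{\omega_t}\omega_0$. Second, the ``move the points'' remark appeals to a deformation invariance that is not available, but it is unnecessary since these metrics are smooth on all of $M$.

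The genuine gap is in Step 3. Your claim that, for suitable $\alpha$, the neck curvature tends to $0$ (so that $S_C\ge -n-o(1)$ there) is false for your fixed-ratio neck $r_\delta<|z|<2r_\delta$.
\begin{itemize}
\item The Burns--Simanca tail is harmless: for $\alpha<1/2$ it contributes $o(1)$ to $S_C$ in every dimension, $n=2$ included.
\item The trouble is the other half of the gluing, the cut-off $\chi(|z|/r_\delta)\varphi(z)$ of the $O(|z|^4)$ part of the background potential. Let $Q$ be the quartic Taylor polynomial of $\varphi$ at $p$ and $w=z/r_\delta$. On the neck
\[
S_C(g_\delta)=-\Delta_{\mathrm{eucl}}^2\bigl(\chi(|w|)Q(w)\bigr)+o(1)=\chi(|w|)\,S_C(g)(p)+\bigl(\text{terms containing }\nabla^j\chi,\ j\ge1\bigr)+o(1).
\]
\item The cut-off terms pair derivatives of $\chi$ with derivatives of $Q$, and $Q\neq0$ as soon as $g$ is not flat at $p$. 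So they are scale invariant, of order one, of no definite sign, and independent of $\delta$ and $\alpha$.
\end{itemize}
This is exactly why the paper only obtains $|S_C(g_\delta)|\le C_0$ on $A_\delta$ in \eqref{eq:sc_glue_2}, and notes that it is unclear whether $C_0\le1$.

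\paragraph{What is missing.} The conformal correction is therefore not a fallback but the heart of the argument, and it is not a ``small linear problem''. You need $\Delta_{g_\delta}u\lesssim -C_0$ pointwise on the neck, with $C_0$ of order one. But $\int_{\widehat M}\Delta_{g_\delta}u\,dV_{g_\delta}=0$ forces a compensating positive part of total mass comparable to $r_\delta^{2n}$. That positive part must be spread over a region much larger than the neck, so that it is pointwise small where you only know $S_C\ge -n$, and it must be absorbed by a margin.

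The paper takes $u_\delta=r_\delta^2\kappa(z)v(z/r_\delta)$. Here $v=N*f\ge0$ is the Newtonian potential of a radial bump $f\le0$ equal to $-C_0$ on $\{1\le|w|\le2\}$. The construction works for these reasons:
\begin{itemize}
\item Since $\Delta_{\mathrm{eucl}}v=f\le0$, a positive Laplacian appears only where the cut-off $\kappa$ acts, at a fixed small scale $\rho\gg r_\delta$. There it is $\lesssim\rho^2+r_\delta/\rho+r_\delta^2/\rho^2$.
\item Radial harmonicity makes $v$ constant on $\{|w|<1/2\}$, so $u_\delta$ extends across the exceptional divisor.
\item $u_\delta\ge0$ handles the factor $e^{-u_\delta/n}$.
\item $u_\delta=O(r_\delta^2)$ changes the volume only by $O(r_\delta^2)$.
\item The margin comes from replacing $g$ by $(1-\tau)^{-1}g$ beforehand.
\end{itemize}

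Alternatively, your pointwise plan could be rescued by a long logarithmic neck $r_\delta\le|z|\le Kr_\delta$, with $K\to\infty$, $Kr_\delta\to0$ and $|\nabla^j\chi|\lesssim|z|^{-j}/\log K$. The cut-off terms above then become $O(1/\log K)$, giving $S_C(g_\delta)=\chi\,S_C(g)+o(1)\ge -n-o(1)$. As written, Step 3 contains neither construction, so the equality for $\mathrm{MinVol}_C(\widehat M)$ is not established.
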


\begin{remark}
The proof of Theorem \ref{th:blow_up} also yields $\mathrm{MinVol}_C(\widehat M)=\mathcal{I}_C^-(\widehat{M})=0$ when $K_M$ is nef but not big (see \S\,\ref{sec:blow_up} for details).
\end{remark}

Our analysis relies heavily on  asymptotic behaviour  of the Bergman kernel of $mK_M$ as $m\rightarrow \infty$,  as well as   the  theory of K\"ahler-Ricci flow and existence of the singular K\"ahler-Einstein metric (when $K_M$ is big).   More precisely,   \eqref{eq:LeBrun_1}  follows directly from   the arithmetic-geometric mean inequality and the well-known asymptotic upper bound of the Bergman kernel of $mK_M$ (which follows easily from  the mean-value inequality of plurisubharmonic functions (cf.  \cite{Berman}; see also \cite{BerndtssonBook})).  If $M$ is a projective manifold of general type,  then there exists a singular K\"{a}hler-Einstein metric $g_{KE}$,  which is smooth on some Zariski open subset $U\subset M$ (cf. \cite[Theorem C.1]{ST12}, see also \cite{BEGZ10}).  With the help of  asymptotic lower bound of the Bergman kernel of $mK_M$ with respect to $g_{KE}$,    \eqref{eq:LeBrun_equality} for the big case can be reduced to find a sequence $\{g_j\}$ of smooth Hermitian metrics on $M$ such that
\begin{enumerate}
 \item $g_j\rightarrow g_{KE}$ in $C^\infty_{\mathrm{loc}}-$topology on $U$;
 \item  both $dV_{g_j}$ and $|S_C(g_j)|$ are uniformly bounded on $M$;
 \item $S_C(g_j)\ge -n$.
\end{enumerate}
In case $K_M$ is big and nef,  the K\"ahler-Ricci flow fits well with this purpose.  It remains open whether the above sequence exists for any compact K\"{a}hler manifold of general type. The proof of Theorem \ref{th:blow_up} is based on a rather delicate gluing technique using the Burns-Simanca metric (compare \cite{LeBrun96} and \cite{Szekelyhidi12}).

An even more interesting problem is to find the relations between the canonical volume and the simplicial volume.   For instance,  it is unknown whether there exists a constant $C_n>0$ such that
\begin{equation}\label{eq:CV-SV}
\mathrm{CanVol}(M)\geq C_n\|M\|
\end{equation}
for any compact complex $n-$manifold $M$.  Note that \eqref{eq:CV-SV} would imply a conjecture of Zhang \cite{WZhang} that $\|M\|=0$ provided ${\rm CanVol}(M)=0$.   Gromov \cite{GromovVolume} showed that if the Ricci curvature of a Riemannian metric $g$ is bounded below by $-1$,  then 
$$
  {\rm vol}_g (M)\ge \frac{\|M\|}{(n-1)^nn!}.  
 $$
 This combined with the argument in \S\,6 would imply  \eqref{eq:CV-SV} for projective manifolds of general type,    provided that there exists a sequence $\{g_j\}$ of K\"ahler metrics on $M$ such that
\begin{enumerate}
 \item $g_j\rightarrow g_{KE}$ in $C^\infty_{\mathrm{loc}}-$topology on $U$;
 \item  $dV_{g_j}$ is uniformly bounded on $M$;
 \item  $Ric(\omega_j)\ge -\omega_j$,  where $\omega_j$ is the K\"ahler form of $g_j$.
\end{enumerate}
Using a result of LeBrun \cite{LeBrun01},  we will show in \S\,8 that for certain complex surfaces of general type $g_{KE}$ cannot be approximated by K\"ahler metrics such that both the volume form and the Ricci form are uniformly bounded.    We will return to this topic in a future paper. 

Finally,  recall that  the Kodaira dimension of a compact complex $n-$manifold is defined to be
$$
\kappa (M):=\limsup_{m\rightarrow+\infty}\frac{\log P_m(M)}{\log{m}}.
$$
It is well known that there exists a constant $C>1$ such that
$$
C^{-1} m^{\kappa(M)} \le P_m(M) \le C m^{\kappa(M)}
$$
(see \cite{Ueno}).  
Two interesting questions immediately arise.  The first is to look for  a  geometric bound similar as scalar curvature integral  or minimal volume for the quantity
$$
\limsup_{m\rightarrow+\infty}\frac{P_m(M)}{{m}^{\kappa(M)}}.
$$
The other is to find the relationship between $\kappa(M)$ and the Ricci rank $\mathcal R_g$ of a Hermitian metric $g$,  i.e.,  the maximum of the number of negative eigenvalues of $Ric(\omega)$.   Liu \cite{Liu} proved $\kappa(M)=\mathcal{R}_g$ when $g$ is a K\"ahler metric with nonpositive bisectional curvature   (see also Wu-Zheng \cite{WZ} for real-analytic case).   In general,  we have the following 

\begin{proposition}\label{prop:RicciRank}
If $(M,g)$ is a compact Hermitian $n-$manifold,  then
\[
\kappa(M)\leq \frac{\mathcal{R}_g+2n}{3} = \mathcal{R}_g+\frac{2}{3}(n-\mathcal{R}_g).
\]
\end{proposition}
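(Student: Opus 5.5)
The plan is to bound the plurigenera pointwise through the Bergman kernel of $mK_M$, using a sub-mean-value estimate on anisotropic polydiscs whose shape is dictated by the signs of the eigenvalues of $Ric(\omega)$.

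\emph{Setup.} Equip $K_M$ with the Hermitian metric $h$ induced by $g$ (i.e.\ $(\det g)^{-1}$). Its Chern curvature is $-Ric(\omega)$. For $s\in\Gamma(M,mK_M)$ set $\|s\|^2=\int_M|s|^2_{h^m}dV_g$. Let $B_m(x)=\sup\{|s(x)|^2_{h^m}:\|s\|=1\}$ be the Bergman kernel on the diagonal. Then
\[
P_m(M)=\int_M B_m\,dV_g .
\]
It therefore suffices to prove the uniform bound
\[
B_m(x)\le C\,m^{\mathcal R_g+\frac23(n-\mathcal R_g)}=C\,m^{\frac{\mathcal R_g+2n}{3}},
\]
with $C$ independent of $x$ and $m$. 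Taking logarithms and dividing by $\log m$ then gives the asserted bound on $\kappa(M)$.

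\emph{Local normalization.} Fix $x\in M$ and holomorphic coordinates $z$ centred at $x$ that diagonalize $\omega$ at $x$, and in which $Ric(\omega)(x)$ is diagonal with eigenvalues $\mu_1,\dots,\mu_n$. At most $\mathcal R_g$ of the $\mu_j$ are negative, say $\mu_1,\dots,\mu_k<0$ with $k\le\mathcal R_g$. Write $|e|_h^2=e^{-\psi}$ for a local holomorphic frame $e$ of $K_M$.

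Multiplying $e$ by the exponential of a holomorphic polynomial of degree $\le2$ removes the constant, linear and pluriharmonic quadratic parts of $\psi$. This leaves
\[
\psi(z)=\sum_j(-\mu_j)|z_j|^2+O(|z|^3).
\]
Hence
\[
\psi(z)\le C\sum_{j\le k}|z_j|^2+C|z|^3,
\]
since $-\mu_j\le0$ for $j>k$. By compactness of $M$, the coordinate charts and the constants can be chosen uniformly in $x$: work on a fixed-radius ball, with the Taylor remainder controlled by $C^3$ bounds of $g$.

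\emph{Mean-value estimate.} Consider the polydisc
\[
P_m=\{|z_j|<m^{-1/2}\ (j\le k),\ |z_j|<m^{-1/3}\ (j>k)\}.
\]
On $P_m$ we have $m\psi\le C$, because $m\cdot m^{-1}$ and $m\cdot m^{-1}$ (from $|z|^3\le m^{-1}$) are both bounded. Write $s=f\,e^{\otimes m}$. The sub-mean-value property of $|f|^2$ gives
\[
|s(x)|^2_{h^m}=|f(0)|^2\le\frac{1}{\mathrm{vol}(P_m)}\int_{P_m}|f|^2\le\frac{e^{C}}{\mathrm{vol}(P_m)}\int_{P_m}|f|^2e^{-m\psi}\le C' m^{k}m^{\frac23(n-k)}\|s\|^2 .
\]
The exponent $k+\frac23(n-k)$ is increasing in $k$, so it is at most $\frac{\mathcal R_g+2n}{3}$. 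This yields the bound on $B_m$, and integrating over $M$ finishes the proof.

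\emph{Main obstacle.} The delicate step is the uniformity of the normalization. The cubic remainder must be controlled uniformly in $x$ even though the eigenvalue signs vary from point to point. Since only the upper bound $\psi\le C\sum_{j\le k}|z_j|^2+C|z|^3$ is used, arguing with the inequality rather than exact diagonal forms should suffice: the estimate holds with $k$ replaced by the pointwise number of negative eigenvalues, which is always at most $\mathcal R_g$.
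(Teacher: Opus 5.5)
Your argument is correct and is essentially the paper's proof. Both bound $B_m(x)$ by the sub-mean-value inequality on a small polydisc, after removing the pluriharmonic part of the second-order Taylor expansion of $\log\det g$ (with an $O(m|z|^3)$ remainder), and then integrate via $P_m(M)=\int_M B_m\,dV_g$. The difference is minor. The paper uses an isotropic polydisc of radius $(A_m/m)^{1/2}$ with $A_m=m^\alpha$, $\alpha<1/3$, keeps the exact Gaussian weights (Theorem \ref{th:Bergman1}) and lets $\alpha\to 1/3$. Your anisotropic polydisc at the endpoint scale $m^{-1/3}$ accepts a bounded factor $e^{C}$ instead. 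This gives the exponent $\frac{\mathcal{R}_g+2n}{3}$ directly, with uniform constants and no separate case for points where $Ric(\omega)$ has a positive eigenvalue.
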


Proposition \ref{prop:RicciRank} turns out to be a fairly simple consequence of a very precise  asymptotic upper bound for the Bergman kernel,  which is given as follows.  
Consider  a compact Hermitian $n-$manifold $(M,g)$ and a holomorphic line bundle $L$ over $M$.  Given a (possibly singular) Hermitian metric $h=e^{-\phi}$ of $L$,  define the Bergman space of $mL=L^{\otimes m}$ to be
$$
A^2_{g,h}(M,mL):=\left\{s\in \Gamma(M,mL): \int_M |s|^2_{h^{\otimes m}} dV_g<\infty\right\},
$$
where $\Gamma(M,mL)$ denotes the space of holomorphic sections of $mL$ over $M$.  The corresponding Bergman kernel function is given by 
\[
B_{g,h,mL} (x)=\sum |s_j(x)|^2_{h^{\otimes m}}
\]
for an/any orthonormal basis $\{s_j\}$ of $A^2_{g,h}(M,mL)$.   In case $L=K_M$ and $h=(dV_g)^{-1}$,  we simply write $B_m$ instead of $B_{g,(dV_g)^{-1},mK_M}$.  

\begin{theorem}\label{th:Bergman1}
Let $(M,g)$ be a compact Hermitian $n-$manifold.  Let $\lambda_1,\cdots,\lambda_n$ be eigenvalues of the  Ricci form $Ric(\omega)$ associated to $\omega$.  Fix  $A_m>0$ with 
$$
\lim_{m\rightarrow+\infty}A_m=+\infty \ \  \text{and}\ \  \lim_{m\rightarrow+\infty}A_m/m^{1/3}=0.
$$
  Then there exists a constant $C>0$ such that
\begin{equation}\label{eq:Bergman_upper_1_1}
B_{m}(x)
\leq \left(1+\frac{C A_m^{3/2}}{m^{1/2}}\,\right)\,\frac{m^n}{\pi^nA_m^{n-p-q}}\left(\prod_{\lambda_j\neq0}|\lambda_j|\right)\left(\prod_{\lambda_j>0}e^{-\lambda_jA_m}\right)
\end{equation}
holds uniformly on $M$ as $m\rightarrow \infty$.  Here $p$ (resp.  $q$) is the number of positive (resp.  negative) eigenvalues at $x$.  
\end{theorem}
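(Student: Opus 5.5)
Since $B_m(x)=\sup\{|s(x)|^2_{h^m}:\ \int_M|s|^2_{h^m}dV_g\le 1\}$, we only need to bound a single normalized section at a fixed point $x$. The plan is a mean-value estimate on a polydisc of carefully chosen, eigenvalue-dependent size, with all constants uniform in $x$.

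\emph{Step 1: holomorphic coordinates and a local weight.} At $x$ choose holomorphic coordinates $z$ centered at $x$ with $g_{j\bar k}(0)=\delta_{jk}$. Rotating unitarily, we may assume $Ric(\omega)(0)$ is diagonal with eigenvalues $\lambda_j$. Trivialize $K_M$ by $dz_1\wedge\cdots\wedge dz_n$. Then $|s|^2_{h^m}dV_g=|f|^2e^{-m\phi}\,dV_{\mathbb C^n}$ up to normalizing constants, where $\phi=\log\det(g_{j\bar k})$. Since $Ric(\omega)=-i\partial\bar\partial\log\det g$, the weight satisfies $i\partial\bar\partial\phi=-Ric(\omega)$.

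By a holomorphic change of coordinates (adding quadratic and cubic polynomial terms), and by subtracting the real part of a holomorphic polynomial from $\phi$ (absorbed into $f$), we may assume
\[
\phi(z)=-\sum_j\lambda_j|z_j|^2+O(|z|^3),
\]
with the $O$-constant uniform in $x$ by compactness. Note that $e^{-m\phi}\approx e^{m\sum\lambda_j|z_j|^2}$. The weight therefore grows in positive directions, which helps, and decays in negative directions, which is costly.

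\emph{Step 2: sub-mean-value on a polydisc of radii $r_j$.} The function $|f|^2$ is plurisubharmonic, so
\[
|f(0)|^2\le \frac{\int_P|f|^2 e^{-m\phi}\,d\mu}{\int_P e^{-m\phi}\,d\mu}
\]
for any rotation-invariant (in each variable separately) measure $d\mu$ on the polydisc $P$. Apply this to the product measure $e^{m\phi_0}\cdot e^{-m\phi}$, where $\phi_0$ is the quadratic part of $\phi$. This gives
\[
|f(0)|^2\le \frac{e^{Cm\max_P |z|^3}}{\prod_j\int_{|z_j|<r_j}e^{m\lambda_j|z_j|^2}\,d\lambda(z_j)}\int_P|f|^2e^{-m\phi}.
\]
Now evaluate each one-variable factor $\int_{|w|<r}e^{m\lambda|w|^2}$, choosing the radii as follows:
\begin{itemize}
\item For $\lambda_j>0$: take $r_j^2=A_m/m$. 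The integral is $\frac{\pi}{m\lambda_j}(e^{\lambda_jA_m}-1)$, which yields the factor $\frac{m\lambda_j}{\pi}e^{-\lambda_jA_m}(1+O(e^{-\lambda_jA_m}))$.
\item For $\lambda_j<0$: take $r_j^2=A_m/m$ as well. The integral is $\frac{\pi}{m|\lambda_j|}(1-e^{-|\lambda_j|A_m})$, giving the factor $\frac{m|\lambda_j|}{\pi}$.
\item For $\lambda_j=0$: take $r_j^2=A_m/m$. The integral is $\pi A_m/m$, giving the factor $\frac{m}{\pi A_m}$.
\end{itemize}
Combining these, the $n-p-q$ zero eigenvalues produce exactly $\frac{m^n}{\pi^nA_m^{n-p-q}}\prod_{\lambda_j\ne0}|\lambda_j|\prod_{\lambda_j>0}e^{-\lambda_jA_m}$.

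The cubic error on $P$ is $m(A_m/m)^{3/2}=A_m^{3/2}m^{-1/2}\to 0$, since $A_m=o(m^{1/3})$. Hence $e^{Cm|z|^3}\le 1+CA_m^{3/2}m^{-1/2}$, and $\int_P|f|^2e^{-m\phi}\le 1$.

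\emph{Main obstacle: uniformity in $x$.} The eigenvalues $\lambda_j$ can be arbitrarily small or change sign as $x$ varies, and the errors $O(e^{-|\lambda_j|A_m})$ above are not uniformly small when $|\lambda_j|A_m$ is bounded. The key observation is that for the upper bound we need not evaluate these integrals asymptotically. It suffices to prove
\[
\int_{|w|^2<A_m/m}e^{m\lambda|w|^2}\,d\lambda(w)\ \ge\ \frac{\pi A_m}{m}\cdot\begin{cases}\dfrac{e^{\lambda A_m}}{\lambda A_m}(1-e^{-\lambda A_m}),&\lambda>0,\\[2mm] \dfrac{1-e^{-|\lambda|A_m}}{|\lambda|A_m},&\lambda<0,\end{cases}
\]
and then compare each of these, by elementary inequalities, with the claimed factor. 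For example, with $t=|\lambda|A_m$ one uses $(1-e^{-t})/t\le\min(1,1/t)$.

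This needs care. The claimed bound uses $|\lambda_j|$ rather than $1/A_m$ for small nonzero eigenvalues, so the comparison has to be done honestly. The honest route is to choose the radius adaptively: for $|\lambda_j|A_m$ small, a larger radius $r_j^2\sim 1/(m|\lambda_j|)\cdot$(a large factor) may be needed. That radius must still keep the cubic error $\le CA_m^{3/2}m^{-1/2}$, and it must do so even while the Taylor expansion is uniform. Checking that these radii can be fitted together, with only the allowed error, is the delicate step.
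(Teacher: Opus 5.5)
Your Steps 1--2 are the paper's proof. You write $\varphi=\log\det(g_{j\bar k})$ as $\varphi(0)-\sum_j\lambda_j|z_j|^2+2\,\mathrm{Re}\,P+O(|z|^3)$. You apply the sub-mean-value inequality to $|s\,e^{-mP}|^2$ against the radial product weight $e^{m\sum_j\lambda_j|z_j|^2}$ on the polydisc of common radius $r=(A_m/m)^{1/2}$, and you evaluate the one-variable integrals.

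There is one minor slip. In your trivialization $|s|_m^2\,dV_g=|f|^2e^{-(m-1)\phi}dV_0$, not $|f|^2e^{-m\phi}dV_0$. The leftover factor $1+O(|z|)$ only costs $1+Cr$, which is absorbed because $r\le A_m^{3/2}m^{-1/2}$.

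What this argument actually gives, with $C$ independent of $x$ by compactness, is
\[
B_m(x)\le\Bigl(1+\frac{CA_m^{3/2}}{m^{1/2}}\Bigr)\frac{m^n}{\pi^nA_m^{n-p-q}}\prod_{\lambda_j<0}\frac{|\lambda_j|}{1-e^{-|\lambda_j|A_m}}\prod_{\lambda_j>0}\frac{\lambda_j}{e^{\lambda_jA_m}-1}.
\]
The paper then replaces each factor by its asymptotic equivalent (``$\sim$'') and declares \eqref{eq:Bergman_upper_1_1} proved, without addressing uniformity. So up to this point you and the paper coincide. You are right that the final passage is not uniform where some $|\lambda_j|A_m$ stays bounded.

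The gap is that the step you leave open cannot be closed, by adaptive radii or by any other means.

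Take first $\lambda<0$. Since $\int_{|w|<\rho}e^{-m|\lambda||w|^2}=\frac{\pi}{m|\lambda|}(1-e^{-m|\lambda|\rho^2})$, reaching the factor $\frac{m|\lambda|}{\pi}(1+\epsilon)$ requires $m|\lambda|\rho^2\ge\log(1+1/\epsilon)$. With $\epsilon\approx CA_m^{3/2}m^{-1/2}\to0$ and $|\lambda|A_m$ bounded, this forces $\rho^2\gg A_m/m$. But keeping the cubic error $e^{Cm\rho^3}\le 1+CA_m^{3/2}m^{-1/2}$ forces $\rho^2\lesssim A_m/m$.

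Now take small $\lambda>0$. The claimed factor $\frac{m\lambda}{\pi}e^{-\lambda A_m}$ is smaller than the best available one, $\frac{m\lambda}{\pi(e^{\lambda A_m}-1)}\approx\frac{m}{\pi A_m}$, by a factor of about $\lambda A_m$. Your ``key observation'' display is in fact an identity, and the inequality $(1-e^{-t})/t\le\min(1,1/t)$ points the wrong way for what you need.

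More decisively, \eqref{eq:Bergman_upper_1_1} with $m_0$ independent of $x$ is false. Take $n=1$, $M$ a compact Riemann surface of genus $\ge2$, and a metric whose curvature $\lambda$ is negative except at one point $x_0$, where it vanishes; a conformal change of a hyperbolic metric produces one. For $x\ne x_0$ we have $p=0$ and $q=1$, so the right-hand side is $(1+CA_m^{3/2}m^{-1/2})\frac{m}{\pi}|\lambda(x)|$. This tends to $0$ as $x\to x_0$ for each fixed $m$. By continuity of $B_m$ this would force $B_m(x_0)=0$ for all $m\ge m_0$, contradicting base-point freeness of $|mK_M|$.

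So the correct uniform statement is the display above; the literal form \eqref{eq:Bergman_upper_1_1} only comes out pointwise, with $m_0=m_0(x)$. That display is all the later applications need:
\begin{itemize}
\item It gives $B_m\le Cm^n$, which is the domination needed for the Fatou argument in the proof of Theorem \ref{th:Main0}.
\item It gives $B_m\le Cm^ne^{-c_0A_m}$ wherever $\lambda_n\ge c_0>0$, which is what Theorem \ref{th:vanishing} uses.
\item Via $t/(1-e^{-t})\le 1+t$, it gives $B_m\le Cm^{q}(m/A_m)^{n-q}$, which is what Proposition \ref{prop:RicciRank} uses.
\end{itemize}
You should state and prove that version rather than try to reach the literal one.
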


A large literature exists on the asymptotic behaviour of the Bergman kernel of $mL$ as $m\rightarrow \infty$ in case $L$ is ample,  among them the most famous is the so-called Tian-Yau-Catlin-Zelditch asymptotic expansion (see e.g.,   \cite{Catlin},  \cite{MM},  \cite{Tian},  \cite{Zelditch}),  which has deep applications in K\"ahler geometry.  
Although the proof of Theorem \ref{th:Bergman1} is only a refinement of known methods,  the theorem itself might be of independent interest.  For instance,   it gives a new proof of the following classical vanishing result.   

\begin{theorem}[cf.  \cite{KobayashiBook},  Corollary 3.1.16]\label{th:vanishing}
Let $(M,g)$ be a compact Hermitian manifold.  If the Ricci form admits at least one positive eigenvalue everywhere on $M$,  then $P_m(M)=0$.
\end{theorem}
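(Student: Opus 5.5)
The plan is to derive Theorem \ref{th:vanishing} directly from the Bergman kernel upper bound in Theorem \ref{th:Bergman1}. We argue by contradiction. Suppose $P_m(M)\neq 0$ for some $m\ge1$. If $s\in\Gamma(M,mK_M)$ is nonzero, then $s^{k}\in\Gamma(M,kmK_M)$ is nonzero for every $k\ge1$. Hence $P_{km}(M)\ge1$ for all $k$, so there are infinitely many indices $m'$ (the multiples of $m$) with $\Gamma(M,m'K_M)\neq0$. The aim is to show that $B_{m'}\to0$ uniformly on $M$ along this sequence.

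For the link between $B_m$ and $P_m$: since $M$ is compact and $h=(dV_g)^{-1}$ is smooth, every holomorphic section has finite $L^2$ norm, so $A^2_{g,h}(M,mK_M)=\Gamma(M,mK_M)$. Integrating the Bergman kernel over an orthonormal basis gives
$$
\int_M B_m\,dV_g=\dim\Gamma(M,mK_M)=P_m(M).
$$
So uniform convergence $B_{m'}\to0$ would force $P_{m'}(M)<1$, i.e.\ $P_{m'}(M)=0$, for large $m'$ in the sequence. This contradicts the previous paragraph.

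The main step is the uniform decay of the right-hand side of \eqref{eq:Bergman_upper_1_1}. Choose, say, $A_m=\log m$ (or $A_m=m^{1/4}$), so that $A_m\to\infty$ and $A_m/m^{1/3}\to0$. The prefactor $1+CA_m^{3/2}m^{-1/2}$ is bounded. By hypothesis, at each $x$ the number $p(x)$ of positive eigenvalues is at least $1$, so $n-p-q\le n-1$. The eigenvalues depend continuously on $x$, and $M$ is compact, so $\lambda_{\max}(x):=\max_j\lambda_j(x)$ is a continuous positive function. Hence $\lambda_{\max}\ge\delta>0$ on $M$, and all $|\lambda_j|\le\Lambda$. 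For $m$ large, $A_m\ge1$, so $A_m^{-(n-p-q)}\le1$. Each positive eigenvalue contributes a factor $e^{-\lambda_jA_m}\le1$, and the one equal to $\lambda_{\max}$ gives $e^{-\delta A_m}$. The product $\prod_{\lambda_j\ne0}|\lambda_j|$ is at most $\Lambda^n$. This yields
$$
B_m(x)\le C' m^n e^{-\delta A_m}
$$
uniformly in $x$.

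The main obstacle is that this bound only decays if $e^{-\delta A_m}$ beats $m^n$. With $A_m=\log m$ this fails unless $\delta>n$. So the choice of $A_m$ is the delicate point, and I would take $A_m=m^{1/4}$. Then $A_m/m^{1/3}=m^{-1/12}\to0$, the prefactor $1+Cm^{3/8-1/2}$ tends to $1$, and
$$
B_m\le C'm^ne^{-\delta m^{1/4}}\to0
$$
uniformly on $M$. One should also check that the constant $C$ in Theorem \ref{th:Bergman1} is uniform on $M$ and that the bound is valid pointwise even where the signature $(p,q)$ jumps. Both are asserted in that theorem ("uniformly on $M$"), and the cruder bound above only uses $p\ge1$ together with the continuity of $\lambda_{\max}$. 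Combining this decay with the integral identity gives $P_m(M)=0$ for all large multiples of any $m$ with $P_m\ne0$, which is the contradiction. Hence $P_m(M)=0$ for all $m\ge1$.
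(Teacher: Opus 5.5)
Your proof is correct and follows the paper's own argument: take $A_m=m^\alpha$ with $\alpha\in(0,1/3)$ in Theorem \ref{th:Bergman1} to get the uniform bound $B_m\le e^{-cm^\alpha}$, then integrate using $P_m(M)=\int_M B_m\,dV_g$. Your compactness argument for a uniform lower bound on the largest eigenvalue, and your powers-of-sections step from ``$P_m=0$ for large $m$'' to ``$P_m=0$ for all $m$'', spell out what the paper compresses into ``$c>0$ independent of $x$'' and ``$\kappa(M)=-\infty$''.
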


Recently, Theorem \ref{th:vanishing} has been extended substantially by Yang in \cite{Yang21}.

\section{Preliminaries}

\subsection{Chern scalar curvature}
Let $M$ be a complex $n-$manifold.  Let $g$ be a Hermitian metric given locally by  
$
\sum^n_{j,k=1}g_{j\bar{k}}dz_j\otimes {d\bar{z}_k}.
$
Then locally $\omega=i\sum^n_{j,k=1}g_{j\bar{k}}dz_j\wedge d\bar{z}_k$ is the K\"{a}hler form of $g$. The Chern Ricci curvature form of $\omega$ is defined by 
\[
Ric(\omega)=-i\partial\bar{\partial}\log\det(g_{j\bar{k}}).
\]
The trace of $Ric(\omega)=: i \sum R_{j\bar{k}}dz_j\wedge d\bar{z}_k$ is called 
the Chern scalar curvature $S_C(g)$ of $g$,  namely,   
\[
S_C(g)=\mathrm{tr}_g(Ric(\omega))=\sum g^{j\bar{k}}R_{j\bar{k}},
\]
where $(g^{j\bar{k}})=(g_{j\bar{k}})^{-1}$.  It is well-known that the Riemannian scalar curvature equals twice the Chern scalar curvature if $g$ is a K\"ahler metric.   In general,  the relationship between them is much more complicated (compare \cite{LY17}).

\subsection{Singular Hermitian metrics}
Let $M$ be a complex $n-$manifold and $L$ a holomorphic line bundle over $M$.  A singular Hermitian metric of $L$ may be written as 
$$
h=h_0 e^{-\varphi}
$$
where $h_0$ is a smooth Hermitian metric of $L$ and $\varphi\in L^1_{\mathrm{loc}}(M)$.   $L$ is said to be pseudoeffective if there exists a singular Hermitian metric $h$ of $L$ such that
$$
\Theta_h = \Theta_{h_0} + i\partial\bar{\partial} \varphi
$$
is a closed positive current.

The multiplier ideal sheaf $\mathcal I(h)\subset \mathcal O_M(L)$ of $(M,L)$ is defined by
$$
\Gamma(U,\mathcal I(h)):=\left\{f\in \Gamma(U,\mathcal O_U(L)): |f|^2_{h_0} e^{-\varphi}\in L^1_{\rm loc}(U)\right\}.
$$
Following Tsuji \cite{Tsuji92},  $h$ is called an analytic Zariski decomposition if 
\begin{enumerate}
\item $\Theta_h$ is a closed positive current,
\item for every $m\ge 0$,  the natural inclusion
$$
H^0\left(M,mL\otimes \mathcal I(h^{\otimes m})\right ) \rightarrow H^0(M,mL)
$$
is an isomorphism.  
\end{enumerate}

\subsection{K\"ahler-Ricci flow}

Here we collect some basic facts about the K\"ahler-Ricci flow by following  the lecture notes of Song and Weinkove (cf.  \cite{SW},  p. 19, 20, 48).   Let $(M,g_0)$ be a compact K\"ahler $n-$manifold.  A K\"ahler-Ricci flow means the following equation
\begin{equation}\label{eq:KR1}
\frac{\partial \omega_t}{\partial t}=-Ric(\omega_t),\ \ \ \omega_t|_{t=0}=\omega_0,
\end{equation}
and the normalized K\"ahler-Ricci flow is given by
\begin{equation}\label{eq:KR2}
\frac{\partial \omega_t}{\partial t}=-Ric(\omega_t)-\omega_t,\ \ \ \omega_t|_{t=0}=\omega_0,
\end{equation}
where $\omega_t$ is the K\"{a}hler form of $g_t$. We shall focus on \eqref{eq:KR2}.  Then the following properties hold:
\begin{enumerate}
\item There exists a unique solution $g_t$ of \eqref{eq:KR2} on some maximal time interval $[0,T)$ with some $0<T\le \infty$.  If $K_M$ is nef,  then one can take $T=\infty$.
\item Set $C_0:=-\inf_M S_C(g_0)-n$. Then
\begin{equation}\label{eq:ScalarLower}
S_C(g_t) \ge -n - C_0 e^{-t},\ \ \  t\in [0,T).
\end{equation}
\item Let $C_0$ be as above.  Then
\begin{equation}\label{eq:volumeCompare}
dV_{g_t} \le e^{C_0(1-e^{-t})} dV_{g_0},\ \ \  t\in [0,T).
\end{equation}
In particular,  $dV_{g_t}$ is uniformly bounded from above for $t\in [0,T)$.
\end{enumerate}

A deeper observation is the following theorem of Zhang \cite{Zhang09} (for a generalization to the semi-ample case, see Song-Tian \cite{ST16}).

\begin{theorem}\label{th:scalar_bdd}
If $M$ is a projective manifold such that $K_M$ is big and nef,  then $S_C(g_t)$ is uniformly bounded for all $t\in [0,\infty)$.
\end{theorem}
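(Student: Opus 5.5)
The plan is to reduce the flow \eqref{eq:KR2} to a parabolic complex Monge--Amp\`ere equation with a degenerate but \emph{smooth} reference class. Then the upper bound on $S_C(g_t)$ comes from maximum principle arguments applied to the time derivative of the potential; the lower bound is already \eqref{eq:ScalarLower}. Since $M$ is projective and $K_M$ is big and nef, the Kawamata base point free theorem makes $K_M$ semi-ample. So for some $m$ the linear system $|mK_M|$ gives a holomorphic map $\Phi:M\to\mathbb{P}^N$, and $\chi:=\frac1m\Phi^*\omega_{FS}$ is a smooth semipositive representative of $2\pi c_1(K_M)$, with the same normalization as $-Ric$. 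Bigness gives $\int_M\chi^n>0$.

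Choose a smooth volume form $\Omega$ with $i\partial\bar\partial\log\Omega=\chi$, and set $\hat\omega_t:=e^{-t}\omega_0+(1-e^{-t})\chi$. Then $\omega_t=\hat\omega_t+i\partial\bar\partial\varphi$, where
$$
\frac{\partial\varphi}{\partial t}=\log\frac{(\hat\omega_t+i\partial\bar\partial\varphi)^n}{\Omega}-\varphi,\qquad \varphi|_{t=0}=0 .
$$
Differentiating $\log\omega_t^n$ gives $Ric(\omega_t)=-\chi-i\partial\bar\partial(\dot\varphi+\varphi)$. Taking the trace yields
$$
S_C(g_t)=-\mathrm{tr}_{\omega_t}\chi-\Delta_t\dot\varphi-\Delta_t\varphi .
$$
Using $\Delta_t\varphi=n-\mathrm{tr}_{\omega_t}\hat\omega_t$, everything reduces to uniform control of $\varphi$, of $\dot\varphi$, of $\mathrm{tr}_{\omega_t}\chi$ (which bounds $\mathrm{tr}_{\omega_t}\hat\omega_t$ up to a term in $e^{-t}\mathrm{tr}_{\omega_t}\omega_0$), and of $\Delta_t\dot\varphi$.

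\textbf{Steps in order.}
\begin{enumerate}
\item Uniform $C^0$ bounds. The upper bounds on $\varphi$ and $\dot\varphi$ follow from the maximum principle, since $(\hat\omega_t+i\partial\bar\partial\varphi)^n$ is controlled by the volume of a fixed class. The lower bound on $\varphi$ uses that $\hat\omega_t^n\ge c\,\chi^n$ and the $L^p$ density bound: the Kolodziej / Eyssidieux--Guedj--Zeriahi uniform $L^\infty$ estimate for Monge--Amp\`ere equations in the big semipositive class $[\chi]$ gives $\varphi\ge -C$. A lower bound on $\dot\varphi$ then comes from the maximum principle applied to $\dot\varphi+A\varphi$ (compare \eqref{eq:volumeCompare}).
\item A parabolic Schwarz lemma for $\Phi$: apply the maximum principle to $\log\mathrm{tr}_{\omega_t}\chi-A\varphi$. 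The curvature term is controlled because $\omega_{FS}$ has bounded bisectional curvature and $Ric(\omega_t)\ge-\omega_t-\chi$ in the appropriate sense. This gives $\mathrm{tr}_{\omega_t}\chi\le C$.
\item Control of $\Delta_t\dot\varphi$. Let $u:=\dot\varphi+\varphi$, so that $S_C=-n-\Delta_t u+(\text{bounded})$ after the previous steps. Compute $(\partial_t-\Delta_t)u$ and $(\partial_t-\Delta_t)|\nabla u|^2$. Following Perelman's and Song--Tian's argument, take $A$ large so that $A-u\ge1$, and apply the maximum principle first to $|\nabla u|^2_{\omega_t}/(A-u)$ to get a gradient bound. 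Then apply it to $-\Delta_t u/(A-u)$ plus a multiple of the gradient quantity. Steps 1 and 2 are used to absorb the terms involving $\chi$. This gives $-\Delta_t u\le C$, hence $S_C(g_t)\le C$.
\end{enumerate}

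I expect step 3 to be the main obstacle. The evolution of $\Delta_t u$ produces $|\nabla\bar\nabla u|^2$ and $|\nabla\nabla u|^2$ terms, whose signs must be arranged carefully, and nothing is uniform near the exceptional locus where $\chi$ degenerates. The only uniform inputs available there are the $C^0$ bounds and the Schwarz-lemma bound $\mathrm{tr}_{\omega_t}\chi\le C$. The uniform $L^\infty$ estimate in step 1 is the other deep input, but I would quote it as known rather than reprove it.
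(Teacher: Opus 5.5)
The paper gives no proof of this statement: it quotes it from Zhang \cite{Zhang09} (and Song--Tian \cite{ST16} for the semi-ample case). Your outline follows the approach of those references: Kawamata base point freeness, uniform $C^0$ bounds, the parabolic Schwarz lemma for $\Phi$, and Perelman-type estimates for $u=\dot\varphi+\varphi$, with the upper bound reduced via $S_C(g_t)=-\mathrm{tr}_{\omega_t}\chi-\Delta_t u\le-\Delta_t u$.

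\textbf{The gap: the lower bound $\dot\varphi\ge -C$ in Step 1.} It does not follow from the maximum principle for $\dot\varphi+A\varphi$. At a minimum point you only get
\[
0\ \ge\ (A-1)\dot\varphi-An+\mathrm{tr}_{\omega_t}\bigl(A(1-e^{-t})\chi+e^{-t}\chi+(A-1)e^{-t}\omega_0\bigr).
\]
When $\dot\varphi\ll0$, the trace can be made large only through the arithmetic--geometric mean inequality, which gives a lower bound $c\,(\hat\omega_t^n/\Omega)^{1/n}e^{-(\dot\varphi+\varphi)/n}$. But $\hat\omega_t^n/\Omega\to\chi^n/\Omega$, which vanishes where $\Phi$ is not an immersion. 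So a minimum near that set yields only a bound that degenerates there, or linearly in $t$. The estimate \eqref{eq:volumeCompare} you cite does not help either: it is an upper bound on $\omega_t^n$, i.e.\ on $u$.

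This bound is load-bearing. In Step 2 the test function $\log\mathrm{tr}_{\omega_t}\chi-A\varphi$ produces the term $-A\dot\varphi$ at its maximum. In Step 3 the bounds on $|\nabla u|^2/(A-u)$ and $-\Delta_t u/(A-u)$ are uniform only if $A-u\le C$, i.e.\ $u\ge -C$.

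A fix is to run the identity behind \eqref{eq:volumeCompare} in the other direction. By \eqref{eq:ScalarLower},
\[
\partial_t u=\partial_t\log(\omega_t^n/\Omega)=-S_C(g_t)-n\le |C_0|e^{-t}.
\]
Hence $u(t)\ge u(s)-|C_0|e^{-t}$ for $s\in[t,t+1]$. Averaging in $s$ gives, pointwise on $M$,
\[
u(t)\ \ge\ \varphi(t+1)-\varphi(t)+\int_t^{t+1}\varphi\,ds-|C_0|e^{-t}\ \ge\ -3\sup_{M\times[0,\infty)}|\varphi|-|C_0|.
\]
So the two-sided bound on $\varphi$ from your Eyssidieux--Guedj--Zeriahi step already gives $u\ge -C$ and $\dot\varphi=u-\varphi\ge -C$, with no maximum principle. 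Steps 2 and 3 can then proceed.

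\textbf{Two smaller points.}

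In Step 2 no Ricci lower bound is needed, and none is available: $Ric(\omega_t)\ge-\omega_t-\chi$ would mean $i\partial\bar\partial u\le\omega_t$, which is not known along the flow. Since $\partial_t\omega_t=-Ric(\omega_t)-\omega_t$, the Ricci terms cancel exactly, and Chern--Lu gives $(\partial_t-\Delta_t)\log\mathrm{tr}_{\omega_t}\chi\le C\,\mathrm{tr}_{\omega_t}\chi+1$.

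In Step 3 the $\chi$-terms are of higher order than what $\mathrm{tr}_{\omega_t}\chi\le C$ controls:
\begin{itemize}
\item $(\partial_t-\Delta_t)|\nabla u|^2$ contains $2\mathrm{Re}\langle\nabla\mathrm{tr}_{\omega_t}\chi,\bar\nabla u\rangle$;
\item the evolution of $\Delta_t u$ contains $\Delta_t\mathrm{tr}_{\omega_t}\chi$.
\end{itemize}
The second is bounded below by Chern--Lu, using $Ric(\omega_t)=-\chi-i\partial\bar\partial u$. The first needs the gradient term of Chern--Lu. Add a multiple of $\mathrm{tr}_{\omega_t}\chi$ to the test function, and use $(\partial_t-\Delta_t)\mathrm{tr}_{\omega_t}\chi\le C-|\nabla\partial\Phi|^2$ (valid after Step 2) together with $|\nabla\mathrm{tr}_{\omega_t}\chi|^2\le C\,\mathrm{tr}_{\omega_t}\chi\,|\nabla\partial\Phi|^2$. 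This is how the Schwarz lemma actually enters the gradient and Laplacian estimates.
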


\subsection{Singular K\"ahler-Einstein metrics}
The K\"ahler-Ricci flow was first used by Cao \cite{Cao} to give an alternative proof of Yau's  theorem on the existence of K\"ahler-Einstein metric on compact K\"ahler manifolds with trivial or negative first Chern class (cf.  Yau \cite{Yau}; see also Aubin \cite{Aubin} for the negative case).  Tsuji \cite{Tsuji88} (see also Tian-Zhang \cite{TZ06}) proved the following

\begin{theorem}\label{th:Tsuji}
 If $K_M$ is big and nef,  then the solution $g_t$ of \eqref{eq:KR2} converges in $C^\infty_{\rm loc}-$topology on some Zariski open set  $U\subset M$ to a K\"ahler-Einstein metric $g_{KE}$ on $U$.  Moreover,  the K\"ahler form of  $g_{KE}$  extends to a closed positive current on $M$.  
\end{theorem}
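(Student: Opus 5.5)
We plan to follow Tsuji's approach, as streamlined by Tian--Zhang: reduce \eqref{eq:KR2} to a parabolic complex Monge--Amp\`ere equation, and derive estimates that are uniform in $t$ away from a fixed divisor. The Zariski open set will be $U:=M\setminus E$, where $E$ is an effective divisor supplied by Kodaira's lemma.

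\emph{Step 1: reduction to a scalar equation.} Fix a smooth volume form $\Omega$ and set $\chi:=-Ric(\Omega)\in 2\pi c_1(K_M)$. Put $\hat\omega_t:=e^{-t}\omega_0+(1-e^{-t})\chi$. Since $\partial_t[\omega_t]=-2\pi c_1(M)-[\omega_t]$, we have $[\omega_t]=[\hat\omega_t]$, so we may write $\omega_t=\hat\omega_t+i\partial\bar\partial\varphi_t$. Then \eqref{eq:KR2} becomes
$$
\frac{\partial\varphi_t}{\partial t}=\log\frac{(\hat\omega_t+i\partial\bar\partial\varphi_t)^n}{\Omega}-\varphi_t,\qquad \varphi_0=0 .
$$
Because $K_M$ is nef, the class $[\hat\omega_t]$ stays K\"ahler, so $T=\infty$. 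Because $K_M$ is big and nef, Kodaira's lemma gives an effective divisor $E$, with a section $s$ and a Hermitian metric $h_E$, such that $\chi-\varepsilon\Theta_{h_E}$ is a K\"ahler form after a harmless adjustment of $\Omega$, for some small $\varepsilon>0$. This positivity is what compensates for the degeneration of $\hat\omega_t$ as $t\to\infty$.

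\emph{Step 2: uniform estimates.} We aim for the following bounds, in order.
\begin{enumerate}
\item Upper bounds $\varphi_t\le C$ and $\dot\varphi_t\le C$, from the maximum principle applied to the equation and to its $t$-derivative. These use only that $\hat\omega_t^n\le C\Omega$.
\item The key lower bound $\varphi_t\ge \varepsilon\log|s|^2_{h_E}-C$. This follows from the maximum principle applied to $\varphi_t-\varepsilon\log|s|^2_{h_E}$: that function tends to $+\infty$ along $E$, so its minimum is attained on $U$, where $\hat\omega_t+\varepsilon i\partial\bar\partial\log|s|^2\ge c\,\omega_0$ gives a volume lower bound. A similar argument should give $\dot\varphi_t\ge N\log|s|^2-C$.
\item A second-order estimate $\mathrm{tr}_{\omega_0}\omega_t\le C|s|^{-2N}_{h_E}$, from the Aubin--Yau/Chern--Lu computation for $\log\mathrm{tr}_{\omega_0}\omega_t-A\varphi_t+A\varepsilon\log|s|^2$. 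Together with the volume bounds, this makes $\omega_t$ uniformly equivalent to $\omega_0$ on compact subsets of $U$.
\item Local Calabi/Evans--Krylov estimates and parabolic Schauder theory, which give uniform $C^k_{\rm loc}(U)$ bounds for every $k$.
\end{enumerate}

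\emph{Step 3: convergence, and the main obstacle.} Here we must upgrade subsequential $C^\infty_{\rm loc}$ limits to actual convergence. The plan is to show that $\varphi_t+Ce^{-t}$ (or $\varphi_t+\dot\varphi_t$ corrected by an $O(te^{-t})$ term) is monotone in $t$. This is done by differentiating the equation and using $\partial_t\hat\omega_t=e^{-t}(\chi-\omega_0)$. Monotonicity together with the two-sided local bounds yields pointwise convergence $\varphi_t\to\varphi_\infty$ on $U$, and Step 2 upgrades this to $C^\infty_{\rm loc}(U)$ convergence. We also need $\dot\varphi_t\to0$ locally, which should follow from the decay estimate on $\partial_t(\varphi_t+\dot\varphi_t)$. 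The limit then satisfies $(\chi+i\partial\bar\partial\varphi_\infty)^n=e^{\varphi_\infty}\Omega$ on $U$. Taking $-i\partial\bar\partial\log$ of both sides gives $Ric(\omega_{KE})=-\omega_{KE}$ for $\omega_{KE}:=\chi+i\partial\bar\partial\varphi_\infty$. I expect the hardest parts to be this monotonicity/convergence argument and the barrier lower bound in (2), because every estimate must be uniform in $t$ while the reference forms $\hat\omega_t$ degenerate.

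\emph{Step 4: extension across $E$.} The function $\varphi_\infty$ is $\chi$-plurisubharmonic on $U$ and bounded above, since $\varphi_t\le C$. By the Grauert--Remmert extension theorem for psh functions bounded above across an analytic set, $\varphi_\infty$ extends to a $\chi$-psh function on $M$. Hence $\omega_{KE}=\chi+i\partial\bar\partial\varphi_\infty$ extends to a closed positive current on $M$ in the class $2\pi c_1(K_M)$.
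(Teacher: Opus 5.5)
Your proposal is correct and follows the standard K\"ahler--Ricci flow argument of Tsuji \cite{Tsuji88} and Tian--Zhang \cite{TZ06} (as presented in \cite{SW}), which is exactly what the paper relies on: it quotes this theorem from those references and gives no proof of its own. The one step to tighten is the bound on $\dot\varphi_t$, because the maximum principle applied to $\dot\varphi_t$ alone fails on account of the uncontrolled term $e^{-t}\mathrm{tr}_{\omega_t}\chi$; instead use $Q:=(e^t-1)\dot\varphi_t-\varphi_t-nt$, which satisfies $(\partial_t-\Delta_{g_t})Q=-\mathrm{tr}_{\omega_t}\omega_0\le 0$ and $Q|_{t=0}=0$, so together with $\varphi_t\le C$ it gives $\dot\varphi_t\le Cte^{-t}$ for $t\ge 1$, hence $\varphi_t+C(1+t)e^{-t}$ is nonincreasing for $t\ge 1$, which is the monotone quantity your Step 3 needs.
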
 
 
One calls   $g_{KE}$ a singular K\"ahler-Einstein metric on $M$.  Tsuji's work was extended by Eyssidieux-Guedj-Zeriahi \cite{EGZ},  Boucksom et al.  \cite{BEGZ10} and Song-Tian \cite{ST12} to general projective manifolds of general type.

\begin{theorem}[cf. \cite{ST12}, Theorem B1, C1]\label{th:ST}
Let $M$ be a projective manifold of general type. Then there exists a measure $dV_{KE}$ such that 
\begin{enumerate}
\item $(K_M,(dV_{KE})^{-1})$ is an analytic Zariski decomposition,
\item $\omega_{KE}:=i\partial\bar{\partial}\log dV_{KE}$ is a closed positive current on $M$; moreover,  $\omega_{KE}$ is smooth on a Zariski open set $U\subset M$ and satisfies
$$
{\rm Ric}(\omega_{KE})=-\omega_{KE} \ \ \ \text{on\ \ }U.
$$
\end{enumerate}
\end{theorem}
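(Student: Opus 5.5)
The plan is to follow the variational and pluripotential route of Boucksom--Eyssidieux--Guedj--Zeriahi, in the form used by Song--Tian. We solve a degenerate complex Monge--Amp\`ere equation in the big class $c_1(K_M)$ and then read off the measure $dV_{KE}$ from the solution.

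\textbf{Setting up the equation.} Fix a smooth volume form $\Omega$ on $M$ and put $\theta:=i\partial\bar\partial\log\Omega\in c_1(K_M)$, a smooth closed real $(1,1)$-form. Since $K_M$ is big, the class $\{\theta\}$ is big. We look for a $\theta$-psh function $\varphi$ with \emph{minimal singularities} (in the sense of Demailly) solving
\begin{equation*}
\langle(\theta+i\partial\bar\partial\varphi)^n\rangle=e^{\varphi}\,\Omega ,
\end{equation*}
where $\langle\cdot\rangle$ is the non-pluripolar Monge--Amp\`ere product. Existence and uniqueness in the class of full Monge--Amp\`ere mass is the content of \cite{BEGZ10}. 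One way to obtain it is to solve $(\theta+\varepsilon\omega_0+i\partial\bar\partial\varphi_\varepsilon)^n=e^{\varphi_\varepsilon}\Omega$ by Aubin--Yau, with $\omega_0$ K\"ahler, and then let $\varepsilon\to0$. The maximum principle makes $\varphi_\varepsilon$ decreasing in $\varepsilon$ and bounded above. Kolodziej-type $L^\infty$ estimates relative to the envelope $V_\theta:=\sup\{\psi\ \theta\text{-psh},\ \psi\le0\}$ then give a lower bound $\varphi_\varepsilon\ge V_\theta-C$. We set $dV_{KE}:=e^{\varphi}\Omega$.

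\textbf{Positivity and the analytic Zariski decomposition.} By construction, $\omega_{KE}=i\partial\bar\partial\log dV_{KE}=\theta+i\partial\bar\partial\varphi\ge0$ is a closed positive current. For the decomposition property, take $s\in H^0(M,mK_M)$. The function $\frac1m\log|s|^2_{\Omega^{-m}}$ is $\theta$-psh up to a constant. By minimality of the singularities of $\varphi$ we get $\frac1m\log|s|^2_{\Omega^{-m}}\le\varphi+C_s$, hence
\begin{equation*}
|s|^2_{(dV_{KE})^{-m}}=|s|^2_{\Omega^{-m}}e^{-m\varphi}\le e^{mC_s}.
\end{equation*}
So $s$ lies in $\mathcal I(h^{\otimes m})$ with $h=(dV_{KE})^{-1}$, and the inclusion in the definition of Tsuji is an isomorphism.

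\textbf{Smoothness on a Zariski open set.} This is the main obstacle. By Kodaira's lemma, write $K_M=A+E$ with $A$ an ample $\mathbb Q$-divisor and $E$ effective. Take $U:=M\setminus(\mathrm{Supp}\,E\cup\mathbb B_+(K_M))$; on this ample locus $V_\theta$ is locally bounded. The plan is Tsuji's barrier argument, applied uniformly in $\varepsilon$:
\begin{itemize}
\item[(a)] Obtain the $C^0$ bound $\varphi_\varepsilon\ge\delta\log|s_E|^2_{h_E}-C$ from the maximum principle, using that $\theta-\delta\Theta_{h_E}$ is K\"ahler.
\item[(b)] Obtain a Laplacian estimate $\mathrm{tr}_{\omega_0}\omega_\varepsilon\le C|s_E|^{-2N}$. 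This comes from the Aubin--Yau/Chern--Lu inequality applied to $\log\mathrm{tr}_{\omega_0}\omega_\varepsilon-A\varphi_\varepsilon+A\delta\log|s_E|^2$, whose maximum is attained off $E$.
\item[(c)] Get local $C^{2,\alpha}$ estimates by Evans--Krylov on compact subsets of $U$, then bootstrap to $C^\infty_{\mathrm{loc}}$.
\end{itemize}
Passing to the limit, $\varphi$ is smooth on $U$. There the equation becomes $\omega_{KE}^n=e^{\varphi}\Omega$. Applying $-i\partial\bar\partial\log$ to both sides gives ${\rm Ric}(\omega_{KE})=-\theta-i\partial\bar\partial\varphi=-\omega_{KE}$ on $U$. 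The delicate point is keeping every constant in (a)--(c) independent of $\varepsilon$ near the non-ample locus, which is exactly what the twisting by $\delta\log|s_E|^2$ is designed to achieve.
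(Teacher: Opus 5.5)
The paper does not prove this statement; it is quoted from \cite{ST12}, which rests on \cite{BEGZ10}. The question is therefore whether your sketch stands on its own, and only part of it does.

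Two parts are fine. Given the solution $\varphi$ with minimal singularities from \cite{BEGZ10}, your argument that $\frac1m\log|s|^2_{\Omega^{-m}}$ is $\theta$-psh, so that $|s|^2_{(dV_{KE})^{-m}}$ is bounded, correctly gives the analytic Zariski decomposition. The identity $\mathrm{Ric}(\omega_{KE})=-\omega_{KE}$ also follows on any open set where $\varphi$ is smooth.

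The gap is the smoothness on a Zariski open set. Your steps (a)--(c) are applied to smooth solutions of $(\theta+\varepsilon\omega_0+i\partial\bar\partial\varphi_\varepsilon)^n=e^{\varphi_\varepsilon}\Omega$ obtained from Aubin--Yau. Aubin--Yau needs $c_1(K_M)+\varepsilon[\omega_0]$ to be a K\"ahler class for all small $\varepsilon>0$, and that is exactly the condition that $K_M$ be nef. The theorem only assumes general type.

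For example, take the surfaces $M_k$ of \S\,\ref{sec:Ricci}, which are blow-ups of minimal surfaces of general type. An exceptional curve $C$ has $K_{M_k}\cdot C=-1$, so $(c_1(K_{M_k})+\varepsilon[\omega_0])\cdot C<0$ for small $\varepsilon$. Hence the $\varphi_\varepsilon$ do not exist.

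Step (a) has the same defect. The form $\theta-\delta\Theta_{h_E}$ represents $(1-\delta)c_1(K_M)+\delta c_1(A)$, which is K\"ahler for small $\delta$ only if $K_M$ is nef; in general you can only take $\delta=1$. So what you have written is Tsuji's argument for $K_M$ big and nef (Theorem \ref{th:Tsuji}), not a proof of the general-type statement.

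The missing ingredient is a reduction of the non-nef case to a semi-positive class. In the works behind the citation this comes from finite generation of the canonical ring (Birkar--Cascini--Hacon--McKernan). On a suitable resolution $\mu:\widetilde M\to M$ one has $\mu^*K_M=P+N$, where $P$ is semi-ample and big (pulled back from the canonical model) and $N$ is the effective fixed part. The minimal-singularity potential of $\mu^*\theta$ then has the form $\log|s_N|^2_{h_N}+u$ with $u$ bounded.

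The function $u$ solves a degenerate Monge--Amp\`ere equation in the nef and big class $c_1(P)$, with density vanishing along $N$ and along the exceptional divisor of $\mu$. Only in that setting do approximation through K\"ahler classes and Tsuji's barrier argument with $\varepsilon$-independent constants go through. They give smoothness on a Zariski open subset of $\widetilde M$ on which $\mu$ is an isomorphism.

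Without this step, or some other regularity theory on the ample locus of a big, non-nef class, your sketch does not establish assertion (2).
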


\subsection{The Burns-Simanca metric}\label{subsec:BS}

Denote by $\mathrm{Bl}_0\mathbb{C}^n$ the blow-up of $\mathbb{C}^n$ at the origin, i.e.,
\[
\mathrm{Bl}_0\mathbb C^n:=\left\{(z,[\zeta])\in\mathbb{C}^n\times\mathbb{P}^{n-1},\ z_j\zeta_k=z_k\zeta_j,\ 1\leq j,k\leq n\right\}.
\]
Here $[\zeta]=[\zeta_1:\cdots:\zeta_n]$ is the homogeneous coordinate on $\mathbb{P}^{n-1}$. It is known that $\mathrm{Bl}_0\mathbb{C}^n$ is the total space of the tautological line bundle over $\mathbb{P}^{n-1}$ with a natural projection $\pi:\mathrm{Bl}_0\mathbb{C}^n\rightarrow\mathbb{P}^{n-1}$. In particular, $\mathrm{Bl}_0\mathbb C^n$ is a complex manifold. There is another natural holomorphic map
\[
\varpi:\mathrm{Bl}_0\mathbb C^n\rightarrow\mathbb{C}^n,\ \ \ (z,[\zeta])\mapsto z,
\]
which maps $\mathrm{Bl}_0\mathbb C^n\setminus\varpi^{-1}(0)$ biholomorphically to $\mathbb{C}^n\setminus\{0\}$. Recall that $\varpi^{-1}(0)$ is called the exceptional locus, which is a submanifold of $\mathrm{Bl}_0\mathbb C^n$ (biholomorphic to $\mathbb{P}^{n-1}$).

The Burns-Simanca metric $g_{BS}$ is a K\"{a}hler metric on $\mathrm{Bl}_0\mathbb{C}^n$ with zero scalar curvature (cf. \cite{LeBrun88,Simanca91}). On the complement of the exceptional divisor, its K\"{a}hler form can be written as
\[
\omega_{BS}=i\partial\bar{\partial}\left(\frac{|z|^2}{2}+\psi(z)\right),
\]
where
\begin{equation}\label{eq:BS_zero}
\psi(z)=a\log|z|^2+\eta(|z|^2)
\end{equation}
for some smooth function $\eta$ on $[0,+\infty)$ and $a>0$. When $n=2$, one can take $a=1$ and $\eta\equiv0$; when $n\geq3$, $\eta$ cannot be written explicitly, but we have the asymptotic expansion
\begin{equation}\label{eq:BS_infty}
\psi(z)=-|z|^{4-2n}+O(|z|^{2-2n}),\ \ \ |z|\to \infty
\end{equation}
(see \cite{Szekelyhidi12}).

The formula \eqref{eq:BS_zero} enables us to extend $\omega_{BS}$ through the exceptional locus. Indeed, if $(z,[\zeta])\in\mathrm{Bl}_0\mathbb{C}^n$, then
\[
\log|z|^2=\log\left(\sum^n_{j=1}\frac{|\zeta_j|^2|z_k|^2}{|\zeta_k|^2}\right)=\log|\zeta|^2+\log|z_k|^2-\log|\zeta_k|^2,
\]
where $z_k,\zeta_k\neq0$, so that
\[
i\partial\bar{\partial}\log|z|^2=i\partial\bar{\partial}\log|\zeta|^2.
\]
As a consequence, we have
\begin{equation}\label{eq:BS_FS}
\omega_{BS}=a\cdot \pi^*\omega_{FS}+\varpi^*\left(i\partial\bar{\partial}\left(\frac{|z|^2}{2}+\eta(|z|^2)\right)\right)
\end{equation}
outside the exceptional locus, where $\omega_{FS}$ is the Fubini-Study metric on $\mathbb{P}^{n-1}$.

\subsection{An $L^2$ estimate for $\bar{\partial}$}

We present the following refinement of Demailly-Nadel's vanishing theorem which might be useful for other purposes.  

\begin{theorem}\label{th:L2}
Let $M$ be a projective $n-$manifold equipped with a K\"{a}hler metric $g$ and $L$ a holomorphic line bundle over $M$ equipped with a singular Hermitian metric $h=e^{-\varphi}$ satisfying 
\begin{enumerate}
\item  $\Theta_h$ is a closed positive current on $M$,  
\item $h$ is smooth on some open  set $U\subset M$ and there exists  a continuous positive $(1,1)$-form on $\gamma$ on  $U$ such that 
$$
i\partial\bar{\partial}\varphi\geq\gamma\ \ \ \text{on\ \ } U.
$$
\end{enumerate}
Then for any $\bar{\partial}$-closed $v\in L^2_{(0,1)}(M,K_M+ L)$ with $\mathrm{supp}\,v\subset U$ and 
$$
\int_M|v|^2_{(dV_g)^{-1},\gamma}e^{-\varphi}dV_g:=\int_U |v|^2_{(dV_g)^{-1},\gamma}e^{-\varphi}dV_g<+\infty, 
$$
there exists $u\in L^2(M,K_M+ L)$ such that $\bar{\partial}u=v$ and
\[
\int_M|u|^2_{(dV_g)^{-1}}e^{-\varphi}dV_g\leq\int_M |v|^2_{(dV_g)^{-1},\gamma}e^{-\varphi}dV_g.
\]
Here if we write $v=\sum v_j dz_1\wedge\cdots \wedge dz_n \wedge d\bar{z}_j \otimes \xi$ and $\gamma=\sum \gamma_{jk} dz_j\wedge d\bar{z}_k$ in local coordinates,  then 
$$
|v|^2_{(dV_g)^{-1},\gamma} = \sum \gamma^{jk} v_j\bar{v}_k,\ \ \ (\gamma^{jk})=(\gamma_{jk})^{-1}.
$$
\end{theorem}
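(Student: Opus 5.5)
This is a refinement of the Demailly--Nadel $L^2$ estimate. The idea is to apply H\"ormander--Demailly to a complete K\"ahler metric on a Zariski open set where $\varphi$ is smooth, using a curvature operator that is large only where $v$ lives.

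\textbf{Step 1: regularize $\varphi$ on a complete K\"ahler manifold.} Since $M$ is projective, I fix a hypersurface $H$ and work on $X=M\setminus H$. This is Stein, hence carries a complete K\"ahler metric. By Demailly's regularization, there are smooth weights $\varphi_\varepsilon\downarrow\varphi$ on $M$ with
\[
i\partial\bar\partial\varphi_\varepsilon\ge -\varepsilon\omega_g.
\]
On relatively compact subsets of $U$, where $\varphi$ is already smooth, $\varphi_\varepsilon$ converges to $\varphi$ in $C^2$. Then $i\partial\bar\partial\varphi_\varepsilon\ge\gamma_\varepsilon$ there, with $\gamma_\varepsilon\to\gamma$ locally uniformly. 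It is cleanest to replace $\varphi_\varepsilon$ by a gluing that equals $\varphi$ near $\mathrm{supp}\,v$. Alternatively, one can shrink to an exhaustion $U'\Subset U$ and let $U'\uparrow U$ at the end, using monotone convergence in the $v$-integral.

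\textbf{Step 2: the twisted Bochner--Kodaira--Nakano inequality.} For $(n,1)$-forms with values in $L$ and weight $e^{-\varphi_\varepsilon}$, the curvature operator is $A_\varepsilon=[i\partial\bar\partial\varphi_\varepsilon,\Lambda_{\omega}]$. For $(n,1)$-forms this operator is, pointwise, the contraction by the Hermitian form $i\partial\bar\partial\varphi_\varepsilon$, and
\[
\langle A_\varepsilon^{-1}v,v\rangle\le|v|^2_{(dV_g)^{-1},\gamma_\varepsilon}
\]
wherever $i\partial\bar\partial\varphi_\varepsilon\ge\gamma_\varepsilon>0$. The key point is that the quantity $|v|^2_{(dV_g)^{-1},\gamma}\,dV_g$ for $(n,1)$-forms does not depend on the choice of K\"ahler metric on $M$. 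So I may replace $g$ by $\omega_\delta=\omega_g+\delta\,\hat\omega$, where $\hat\omega$ is a complete metric on $X$, without changing the right-hand side. The negativity $-\varepsilon\omega_g$ away from $U$ has to be handled. I would add $\varepsilon\omega_g$ by using the modified weight $\varphi_\varepsilon+\varepsilon\cdot(\text{potential})$, or more simply use the standard form of Demailly's theorem: for $v$ supported where the curvature is $\ge\gamma_\varepsilon-\varepsilon\omega_g>0$, H\"ormander's estimate holds with operator $A_\varepsilon+\varepsilon I$. This yields $u_{\varepsilon,\delta}$ on $X$ with $\bar\partial u_{\varepsilon,\delta}=v$ and
\[
\int_X|u_{\varepsilon,\delta}|^2e^{-\varphi_\varepsilon}\le\int|v|^2_{(dV_g)^{-1},\gamma_\varepsilon-\varepsilon\omega_g}\,e^{-\varphi_\varepsilon}\,dV_g.
\]

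\textbf{Step 3: limits.} I first let $\delta\to0$ by weak compactness, since the $(n,0)$ norm of $u$ is also metric-independent. Then I let $\varepsilon\to0$. Monotonicity $\varphi_\varepsilon\downarrow\varphi$ gives $e^{-\varphi_{\varepsilon'}}\le e^{-\varphi_\varepsilon}$ for $\varepsilon'\le\varepsilon$, so the bounds pass to the weak limit, as in the standard proof of Nadel vanishing. On $\mathrm{supp}\,v\subset U$, $e^{-\varphi_\varepsilon}\to e^{-\varphi}$ and $\gamma_\varepsilon-\varepsilon\omega_g\to\gamma$, which gives the stated bound. Finally, $\bar\partial u=v$ holds on $X$ with $u\in L^2_{\mathrm{loc}}$, and it extends across the hypersurface $H$ by the standard $L^2$ extension lemma for $\bar\partial$.

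\textbf{Main obstacle.} The delicate point is controlling the curvature lower bound $\gamma_\varepsilon\to\gamma$ on $\mathrm{supp}\,v$ while keeping global approximate positivity. $\mathrm{supp}\,v$ need not be compact in $U$, and $\gamma$ may degenerate toward $\partial U$. My first approach is to apply the estimate to $v\chi_k$ for cutoffs $\chi_k$, but then $\bar\partial(\chi_kv)\ne0$. So instead I would exhaust $U$ by $U_k\Subset U$ and choose Demailly's approximations that coincide with $\varphi$, up to $C^2$-small error, on $U_k$. I would then solve for the fixed $v$, using $\gamma$ only on $U_k$; where $v$ lies outside $U_k$ I would use the weaker bound coming from $\varepsilon$, and control the resulting error as $k\to\infty$ through the finiteness of the $v$-integral over $U$.
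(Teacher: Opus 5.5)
The gap is in Step 1, and Step 2 does not repair it. Demailly's regularization on a compact manifold does not in general give smooth weights with $i\partial\bar\partial\varphi_\varepsilon\ge-\varepsilon\omega_g$. The negative part is only bounded by $\lambda_\varepsilon u+\varepsilon\omega_g$, where $u$ is a fixed positive form coming from the curvature of $TM$ and $\lambda_\varepsilon(x)\to\nu(\Theta_h,x)$ is governed by the Lelong number.

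A uniform $-\varepsilon\omega_g$ bound for every $\varepsilon$ would force $L$ to be nef, and the hypotheses of the theorem allow non-nef $L$. For example, on the blow-up $\widehat M$ of a projective manifold at a point, with exceptional divisor $E$, take $L=\mathcal O(kE)\otimes A$ with $A$ ample, weight $\varphi=k\log|s_E|^2+\varphi_A$, $U=\widehat M\setminus E$ and $\gamma=i\partial\bar\partial\varphi_A$. Then $\Theta_h=k[E]+\gamma\ge0$, but $L\cdot\ell<0$ for a line $\ell\subset E$ once $k$ is large.

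Your Step 2 alternative is not a valid theorem. The H\"ormander--Demailly estimate needs the curvature operator to be semipositive on the \emph{whole} complete manifold, because the a priori inequality passes through $\int_{\mathrm{supp}\,v}\langle A\alpha,\alpha\rangle\le\int_X\langle A\alpha,\alpha\rangle\le\|\bar\partial\alpha\|^2+\|\bar\partial^*_\varphi\alpha\|^2$. Your other option is to add $\varepsilon\rho$, e.g. $\rho=-c^{-1}\log|s_H|^2_{h_H}$ with $H$ ample and $i\Theta_{h_H}\ge c\,\omega_g$, normalized so that $e^{-\varepsilon\rho}=|s_H|^{2\varepsilon/c}_{h_H}\le1$. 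This does compensate a uniform $-\varepsilon\omega_g$ loss. So your route goes through when $\Theta_h$ has zero Lelong numbers (e.g. bounded potentials), but not for the statement as given.

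The missing idea, which is how the paper proceeds, is to regularize on a Stein set where $L$ is trivial, not globally on $M$. Take $H$ ample and a nonzero $f\in\Gamma(M\setminus H,L)$ (Cartan's Theorem A). On the Stein manifold $M''=M\setminus(H\cup\{f=0\})$ the weight becomes a genuine psh function $\phi=\varphi-\log|f|^2$. Embed $M''$ in $\mathbb C^{2n+1}$, compose with a holomorphic retraction and convolve. This gives smooth psh $\phi_j\downarrow\phi$ on an exhaustion by Stein domains $M_j\Subset M''$, with no loss of positivity anywhere and $C^2$ convergence where $\phi$ is smooth. Hence $i\partial\bar\partial\phi_j$ is close to $i\partial\bar\partial\varphi\ge\gamma$ on $\mathrm{supp}\,v\cap M_j$. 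The weights $\phi_j+\log|f|^2$ are then positively curved on all of $M_j$, so the standard estimate on the complete K\"ahler manifold $M_j$ applies directly. One concludes by a weak limit and $L^2$ extension across $H\cup\{f=0\}$, as in your Step 3.

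Two smaller points:
\begin{enumerate}
\item Your monotonicity in Step 3 is reversed. From $\varphi_\varepsilon\downarrow\varphi$ you get $e^{-\varphi_\varepsilon}\le e^{-\varphi_{\varepsilon'}}$ for $\varepsilon'\le\varepsilon$, which is exactly what bounds $\int|u_{\varepsilon'}|^2e^{-\varphi_\varepsilon}$ uniformly.
\item Your exhaustion fallback in the last paragraph gives no control of $\langle A^{-1}v,v\rangle$ on the part of $\mathrm{supp}\,v$ outside $U_k$, where only semipositivity is available. This issue disappears when $\mathrm{supp}\,v\Subset U$, which is the case used in the proof of Theorem~\ref{th:BergmanLower}.
\end{enumerate}
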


\begin{proof}
Take an ample divisor $E$ of  $M$, so that $M':=M\setminus E$ is a Stein manifold.  Fix a smooth Hermitian metric $h_0=e^{-\varphi_0}$ of $L$ and set $\psi=\varphi-\varphi_0$.  Then 
\begin{enumerate}
\item
$i\partial\bar{\partial}\psi+i\partial\bar{\partial}\varphi_0\geq0$ on $M$ in the sense of currents,
\item
$i\partial\bar{\partial}\psi+i\partial\bar{\partial}\varphi_0\geq\gamma$ on $U$.
\end{enumerate}
Since $M'$ is Stein,  there exists a  nontrivial holomorphic section $f$ of $L$ over  $M'$,  in view of Cartan's Theorem A.   It follows that 
\[
\phi:=\psi-\log|f|^2_{h_0}
\]
is a smooth plurisubharmonic (psh) exhaustion function on $M'':=M'\setminus\{f=0\}$,  which satisfies 
$$
i\partial\bar{\partial}\phi\geq\gamma\ \ \ \text{on\ \ } M''\cap U.
$$
In particular,  
$M''$ is  a Stein manifold since $M'$ is,  which can be realized as a closed complex submanifold of $\mathbb{C}^{2n+1}$.  Thus there exists a neighbourhood $W$ of $M''$ in $\mathbb C^{2n+1}$ and a holomorphic retraction $\Psi:W\rightarrow M''$, so that $\phi\circ\Psi$ is psh on $W$. Let $\{M_j\}$ be an increasing sequence of relatively compact Stein domains in $M''$ such that $\bigcup^\infty_{j=1}M_j=M''$. By standard regularization of $\phi\circ \Psi$,  we may find a smooth strictly psh function $\phi_j$ on  $M_j$ such that $\phi_j\downarrow\phi$ and
\[
i\partial\bar{\partial}\phi_j\geq\gamma\ \ \ \text{on }M_j\cap U_0.
\]

With $\varphi_j:=\varphi_0+\phi_j+\log|f|^2_{h_0}$,   $h_j:=e^{-\varphi_j}$ becomes a smooth Hermitian metric of $L$ over $M_j$ satisfying
\begin{enumerate}
\item
$i\partial\bar{\partial}\varphi_j\geq i\partial\bar{\partial}\phi_j>0$ on $M_j$,
\item
$i\partial\bar{\partial}\varphi_j\geq\gamma$ on $M_j\cap{U}$,
\item
$\varphi_j\downarrow\varphi$ as $j\uparrow+\infty$.
\end{enumerate}
By standard $L^2$ estimates for $\bar{\partial}$ on complete K\"ahler manifolds (see e.g.,  \cite[Theorem 5.2]{DemaillyBook}), we can find a solution $u_j$ of  $\bar{\partial}u=v$ on $M_j$ such that 
\[
\int_{M_j}|u_j|^2_{(dV_g)^{-1}}e^{-\varphi_j}dV_g
\leq \int_{M_j}|v|^2_{(dV_g)^{-1},i\partial\bar{\partial}\phi_j}e^{-\varphi_j}dV_g
\leq \int_M|v|^2_{(dV_g)^{-1},\gamma}e^{-\varphi}dV_g.
\]
Thanks to the Banach-Alaoglu theorem,  there exists a weak limit $u$ of $\{u_j\}$ such that $\bar{\partial}u=v$ holds in the sense of distributions on $M''$,  together with estimate
\[
\int_{M''}|u|^2_{(dV_g)^{-1}}e^{-\varphi}dV_g
\leq \int_M|v|^2_{(dV_g)^{-1},\gamma}e^{-\varphi}dV_g.
\]
The conclusion follows immediately from the well-known  $L^2$ extension property of the $\bar{\partial}$-equation across analytic sets (cf.  \cite[Lemma 11.10]{DemaillyBook}).
\end{proof}

\section{Upper asymptotic bounds of the Bergman kernel}\label{sec:upper_bound}

\begin{proof}[Proof of Theorem \ref{th:Bergman1}]
For the sake of simplicity,  we write
$$
|s|_m^2:= |s|^2_{(dV_g)^{-\otimes m}}=\frac{|f|^2}{(\det(g_{j\bar{k}}))^m},  \ \ \ s=f\, (dz_1\wedge\cdots\wedge dz_n)^{\otimes m}\in \Gamma(M,mK_M).
$$
Recall that the Bergman kernel function $B_m$ enjoys the following extremal property
\begin{equation}\label{eq:extremal}
B_m(x)=\sup\left\{|s(x)|^2_m:s\in{\Gamma(M,mK_M)},\ \int_M|s|^2_mdV_g\leq1\right\}.
\end{equation}
 A trivial but very useful formula is 
\begin{equation}\label{eq:plurigenera}
P_m(M)=\int_M B_m dV_g.
\end{equation}

Let $\lambda_1,\cdots,\lambda_n$ be the eigenvalues of $Ric(\omega)=i\sum R_{j\bar{k}}dz_j\wedge d\bar{z}_k$ with respect to $g$, that is, the eigenvalues of the matrix $(g_{j\bar{k}})^{-1}(R_{j\bar{k}})$, with $\lambda_1\leq\lambda_2\leq\cdots\leq\lambda_n$. Note that $\lambda_j=\lambda_j(x)$ is merely a continuous function on $M$.  If we write $\varphi=\log\det(g_{j\bar{k}})$,  then
\begin{equation}\label{eq:det}
(i\partial\bar{\partial}\varphi)^n=(-1)^nn!\lambda_1\cdots\lambda_ndV_g
\end{equation}
and
\begin{equation}\label{eq:tr}
\mathrm{Tr}_g(i\partial\bar{\partial}\varphi)=-S_C=-(\lambda_1+\cdots+\lambda_n).
\end{equation}

Fix $x\in{M}$ and take a local coordinate $(z_1,\cdots,z_n)$ around $x$ with $z(x)=0$.  Near $x$, the K\"ahler form $\omega$ of the Hermitian metric $g$ may be expressed as
\[
\omega=\frac{i}{2}\sum^n_{j=1}dz_j\wedge{d\bar{z}_j}+ O(|z|),
\]
so that
\[
dV_g:=\frac{\omega^n}{n!}=\left(1+O(|z|)\right)\left(\frac{i}{2}\right)^n dz_1\wedge{d\bar{z}_1}\wedge\cdots\wedge{dz_n\wedge{d\bar{z}_n}}=:\left(1+O(|z|)\right)dV_0.
\]
Moreover, we have
\[
\varphi(z)=\varphi(0)-\sum^n_{j=1}\lambda_{j}|z_j|^2 + 2\mathrm{Re}\,P(z)+O(|z|^3),
\]
where $P$ is a complex polynomial of degree $2$ with $P(0)=0$.

Take $0<r\ll1$. For every $s\in\Gamma(M,mK_M)$ with $\int_M|s|^2_m e^{-m\varphi}dV_g\leq1$, we can identify $s$ with a holomorphic function near $x$,  so that
\begin{eqnarray*}
1
&\geq& \int_{|z_j|<r,\atop 1\leq{j}\leq{n}}|s|^2e^{-m\varphi}dV_g\\
&=& \int_{|z_j|<r,\atop1\leq{j}\leq{n}}\left|s(z)e^{-mP(z)}\right|^2e^{-m\varphi(0)+m\sum^n_{j=1}\lambda_j|z_j|^2+O(m|z|^3)}\left(1+O(|z|)\right)dV_0\\
&\ge & e^{-m\varphi(x)}\left(1-Cr\right)\left(1-Cmr^3\right)\int_{|z_j|<r,\atop1\leq{j}\leq{n}}\left|s(z)e^{-mP(z)}\right|^2e^{m\sum^n_{j=1}\lambda_j|z_j|^2}dV_0\\
&\geq& |s(x)|^2e^{-m\varphi(x)}\left(1-Cr\right)\left(1-Cmr^3\right)\prod^n_{j=1}\int_{|z_j|<r}e^{m\lambda_j |z_j|^2}dV_{z_j},
\end{eqnarray*}
in view of the mean value inequality, where $dV_{z_j}$ is the Euclidean volume element in the $z_j-$plane.  Here and in what follows, $C$ denotes a generic positive constant and  $r=o(m^{-1/3})$ as $m\rightarrow+\infty$. It follows from \eqref{eq:extremal} that
\begin{equation}\label{eq:Bergman_upper_1a}
B_{m}(x)\leq \left(1+Cr+Cmr^3\right)\prod^n_{j=1}\left(\int_{|z_j|<r}e^{m\lambda_j|z_j|^2}dV_{z_j}\right)^{-1}.
\end{equation}
A straightforward calculation shows 
\begin{eqnarray*}
\int_{|z_j|<r}e^{m\lambda_j|z_j|^2}dV_{z_j}
&=& \begin{cases}
\frac{\pi}{m\lambda_j}(e^{m\lambda_j r^2}-1),\ \ \ &\lambda_j\neq0,\\
\pi{r}^2,\ \ \ &\lambda_j=0.
\end{cases}
\end{eqnarray*}

Now suppose that $ Ric(\omega)|_x=-i\partial\bar{\partial}\varphi|_x$ admits $p=p(x)$ positive eigenvalues and $q=q(x)$ negative eigenvalues, i.e.,
\begin{equation}\label{eq:eigenvalues}
\lambda_1\leq\cdots\leq\lambda_q<0,\ \ \ \lambda_{q+1}=\cdots=\lambda_{n-p}=0,\ \ \ \lambda_n\geq\cdots\geq\lambda_{n-p+1}>0.
\end{equation}
Note that $p(x)$ and $q(x)$ are lower semicontinuous functions on $M$.

Take $A_m>0$ with $\lim_{m\rightarrow+\infty}A_m=+\infty$ and $\lim_{m\rightarrow+\infty}A_m/m^{1/3}=0$. Let us choose $r=(A_m/m)^{1/2}$.
\begin{itemize}
\item[$(1)$]
If $1\leq j\leq q$,  then
\[
\int_{|z_j|<r}e^{m\lambda_j|z_j|^2}dV_{z_j}
=\frac{\pi}{|\lambda_j|}\frac{1-e^{-|\lambda_j|A_m}}{m}\sim \frac{\pi}{|\lambda_j|m},\ \  \text{as\ } m\rightarrow+\infty.
\]

\item[$(2)$]
If $q+1\leq j\leq n-p$,  then
\[
\int_{|z_j|<r}e^{m\lambda_j|z_j|^2}dV_{z_j}
=\pi\cdot\frac{A_m}{m}.
\]

\item[$(3)$]
If $n-p+1\leq j\leq n$,  then
\[
\int_{|z_j|<r}e^{m\lambda_j|z_j|^2}dV_{z_j}
=\frac{\pi}{\lambda_j}\frac{e^{\lambda_jA_m}-1}{m}
\sim\frac{\pi}{\lambda_jm}e^{\lambda_jA_m},\ \  \text{as\ } m\rightarrow+\infty.
\]
\end{itemize}
These together with \eqref{eq:Bergman_upper_1a} yield \eqref{eq:Bergman_upper_1_1}.
\end{proof}

\begin{remark}
Theorem \ref{th:Bergman1} implies
\begin{equation}\label{eq:Bergman_upper_1}
\limsup_{m\rightarrow+\infty}\frac{B_m(x)}{m^n}\leq
(-1)^n\frac{\lambda_1\cdots\lambda_n}{\pi^n},\ \ \ x\in{M_0}
\end{equation}
\begin{equation}\label{eq:Bergman_upper_2}
\limsup_{m\rightarrow+\infty}\frac{B_m(x)}{m^n}=0,\ \ \ x\notin{M_0},
\end{equation}
where $M_0:=\{x\in M:Ric(\omega)|_x<0\}=\{x\in M:q(x)=n\}$. Both \eqref{eq:Bergman_upper_1} and \eqref{eq:Bergman_upper_2} are well-known (see e.g.,  \cite{Berman},  \cite{BerndtssonBook}).
\end{remark}

\begin{proof}[Proof of  Theorem \ref{th:vanishing}]
Choose $A_m=m^\alpha$ for $\alpha\in(0,1/3)$ in Theorem \ref{th:Bergman1}. If $p(x)>0$, i.e., $Ric(\omega)|_x$ admits a positive eigenvalue $\lambda_1$, then
\begin{equation}\label{eq:vanishing_Bergman}
B_m(x)\leq e^{-cm^\alpha}
\end{equation}
for some $c>0$ independent of $x$. If $Ric(\omega)|_x$ admits a positive eigenvalue everywhere on $M$, then \eqref{eq:vanishing_Bergman} holds for every $x\in M$. It follows from \eqref{eq:plurigenera} that $\kappa(M)=-\infty$, i.e., $P_m(M)=0$,  $\forall\,m$.
\end{proof}

\begin{proof}[Proof of  Proposition \ref{prop:RicciRank}]
By the proof of Theorem \ref{th:vanishing},  it suffices to assume  $p(x)=0$, i.e., $Ric(\omega)|_x\leq0$.  With $A_m=m^\alpha$ for $\alpha\in(0,1/3)$, Theorem \ref{th:Bergman1} yields
\begin{equation}\label{eq:RicciRank_Bergman}
B_m(x)\leq \left(1+Cm^{\frac{3\alpha-1}{2}}\right)\frac{1}{\pi^n}\left(\prod_{\lambda_j\neq0}|\lambda_j|\right)m^{\alpha q+(1-\alpha)n}
\end{equation}
for some positive constant $C$ independent of $x$. Note that $q\leq \mathcal{R}_g$. By \eqref{eq:plurigenera}, \eqref{eq:vanishing_Bergman} and \eqref{eq:RicciRank_Bergman}, we have
\[
\kappa(M)\leq \alpha \mathcal{R}_g+(1-\alpha)n=\mathcal{R}_g+(1-\alpha)(n-\mathcal{R}_g),\ \ \ \forall\,0<\alpha<1/3.
\]
The conclusion follows by letting $\alpha\rightarrow1/3$.
\end{proof}

\section{Lower asymptotic bounds of the Bergman kernel}

With the help of Theorem \ref{th:L2},  Tian's method of peak functions (cf.  \cite{Tian}) applies to the following

\begin{theorem}\label{th:BergmanLower}
Let $M$ be a projective $n-$manifold equipped with a K\"ahler metric $g_0$.  Let $g$ be a smooth Hermitian metric defined on an open set $U\subset M$ with K\"{a}hler form $\omega$,  such that
\begin{enumerate}
\item $Ric(\omega)<0$ on $U$,
\item the curvature of the metric $(dV_g)^{-1}$ on $K_M|_{U}$ extends to  a closed positive current on $M$. 
\end{enumerate}
Let $\lambda_1,\cdots,\lambda_n$ be the eigenvalues of $Ric(\omega)$ with respect to $g$ on $U$.  Set 
$$
\widetilde{B}_m(x) = B_{g_0,(dV_g)^{-1},mK_M}(x).  
$$
Then 
\begin{equation}\label{eq:AsympLower}
\liminf_{m\rightarrow+\infty}\frac{\widetilde{B}_m(x)}{m^n}\geq (-1)^n\frac{\lambda_1\cdots\lambda_n}{\pi^n}\cdot \frac{dV_g}{dV_{g_0}}(x),
\ \ \ \forall\ x\in U.
\end{equation}
\end{theorem}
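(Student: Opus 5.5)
By the extremal property \eqref{eq:extremal}, now with respect to $g_0$ and the weight $h=(dV_g)^{-1}$, it suffices to construct, for each fixed $x\in U$ and each large $m$, a section $s_m\in\Gamma(M,mK_M)$ satisfying
$$
\frac{|s_m(x)|^2_{h^{\otimes m}}}{\int_M|s_m|^2_{h^{\otimes m}}dV_{g_0}}\ \ge\ (1-o(1))\,\frac{m^n}{\pi^n}\,(-1)^n\lambda_1\cdots\lambda_n\,\frac{dV_g}{dV_{g_0}}(x).
$$
I would use Tian's peak section method, with Theorem \ref{th:L2} playing the role of the $L^2$ estimate.

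Write $\varphi=\log\det(g_{j\bar k})$, so that $h^{\otimes m}=e^{-m\varphi}$ on $U$. Choose coordinates centred at $x$ in which $g$ is Euclidean at $x$ and $Ric(\omega)$ is diagonal, so that
$$
\varphi(z)=\varphi(0)+\sum_j|\lambda_j||z_j|^2+2\mathrm{Re}\,P(z)+O(|z|^3),
$$
with all $\lambda_j<0$ by hypothesis (1). Take $\chi$ a cutoff equal to $1$ on $|z|<r_m$ and supported in $|z|<2r_m$, where $r_m=(\log m)/\sqrt m$ (any $r_m$ with $mr_m^2\to\infty$ and $mr_m^3\to0$ works). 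Set
$$
\sigma_m=\chi\,e^{mP(z)}(dz_1\wedge\cdots\wedge dz_n)^{\otimes m}.
$$
The Gaussian computation from \S\ref{sec:upper_bound}, run in reverse with the near-equality case now in force, gives
$$
\int|\sigma_m|^2e^{-m\varphi}\,dV_{g_0}=(1+o(1))\,e^{-m\varphi(0)}\prod_j\frac{\pi}{m|\lambda_j|}\cdot\frac{dV_{g_0}}{dV_g}(x),
$$
because $dV_g=(1+O(|z|))dV_0$ and the factor $dV_{g_0}/dV_g$ is continuous. Moreover $|\sigma_m(0)|^2e^{-m\varphi(0)}=e^{-m\varphi(0)}$. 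The ratio of these two quantities is exactly the target constant.

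Next I correct $\sigma_m$ to a holomorphic section. Put $v=\bar\partial\sigma_m$, which is supported in the annulus $r_m\le|z|\le2r_m\subset U$. We view $mK_M=K_M+L$ with $L=(m-1)K_M$. To get the vanishing $u(x)=0$ I would equip $L$ with the weight
$$
\phi=(m-1)\varphi+(n+1)\theta\log|z|^2,
$$
where $\theta$ is a cutoff near $x$, and also absorb the discrepancy between $dV_{g_0}$ and $dV_g$ into the weight. Hypothesis (2) makes $i\partial\bar\partial\phi$ a positive current globally up to a bounded error, and on $U$ it dominates $\gamma=(m-1)\cdot(\text{negative of }Ric)-C\omega$, which is of size $cm\,\omega$ near $x$. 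The error coming from $\theta$ is bounded and is swallowed by $m$. To keep positivity exact one can instead borrow a small multiple $\epsilon$ of the curvature current together with a Kähler form; on a projective $M$ I would absorb the bounded negative part using the term $i\partial\bar\partial\varphi\ge c\omega$ near $x$, which is precisely where $\theta$ is supported.

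Theorem \ref{th:L2} then yields $u$ with $\bar\partial u=v$, with $u(x)=0$ forced by non-integrability of $|z|^{-2n-2}$, and with
$$
\int|u|^2e^{-m\varphi}dV_{g_0}\lesssim\frac1m\int|v|^2e^{-m\varphi}|z|^{-2n-2}\,dV_{g_0}.
$$
On the annulus, $|\bar\partial\chi|^2\lesssim r_m^{-2}$, the factor $|z|^{-2n-2}$ is at most $r_m^{-2n-2}$, and the Gaussian weight $e^{-m\sum|\lambda_j||z_j|^2}\le e^{-cmr_m^2}=e^{-c(\log m)^2}$. Hence the right-hand side is $o(m^{-n})\,e^{-m\varphi(0)}$, i.e. negligible compared with the norm of $\sigma_m$. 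Consequently $s_m=\sigma_m-u$ is holomorphic, satisfies $s_m(x)=\sigma_m(x)$, and has norm $(1+o(1))$ times that of $\sigma_m$, which gives \eqref{eq:AsympLower}.

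The main obstacle is this weight-positivity step. Theorem \ref{th:L2} requires $\Theta_h$ to be a positive current on all of $M$ while also having a smooth lower bound $\gamma$ on the support of $v$. Inserting the singular factor $\log|z|^2$ and the $dV_{g_0}$ versus $dV_g$ normalisation must not destroy global positivity. I expect to handle this by the $(m-1)$ versus $1$ splitting described above, so that the fixed bounded negative errors are dominated by $m\cdot(-Ric(\omega))$ on the compact neighbourhood of $x$ where the cutoffs live, while positivity elsewhere comes from hypothesis (2).
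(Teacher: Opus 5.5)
Your outline is sound except at the step you flag as the main obstacle, and there the proposed fix does not work. Theorem \ref{th:L2} measures $u$ by $|u|^2_{(dV_{g_0})^{-1}}e^{-\phi}dV_{g_0}$. So to land directly in the target norm $\int_M|u|^2_{(dV_g)^{-m}}dV_{g_0}$ with $L=(m-1)K_M$, you must put on $L$ the metric $(dV_g)^{-m}\otimes dV_{g_0}$ (times the local factor $|z|^{-2(n+1)\theta}$). Its curvature is $mT+Ric(\omega_0)+(n+1)i\partial\bar\partial(\theta\log|z|^2)$, with $T$ the positive current of hypothesis (2). The $\theta$-term is supported near $x$, where $mT\ge cm\,\omega$ absorbs it. But $Ric(\omega_0)$, which comes from the factor $dV_{g_0}$, is a fixed smooth form on all of $M$, not concentrated where your cutoffs live. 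Hypothesis (2) gives only $T\ge0$, with no strict positivity off $U$. So $mT+Ric(\omega_0)$ need not be a positive current for any $m$, and Theorem \ref{th:L2} is not available. For instance, take $K_M$ big and nef but not ample and $T=\omega_{KE}$, which has bounded potentials. Restricting to a curve $C$ with $K_M\cdot C=0$ shows that $mT+Ric(\omega_0)\ge0$ would force $Ric(\omega_0)|_C\equiv0$, which fails for a generic K\"ahler metric $g_0$. Adding a K\"ahler form would change the class of the weight, i.e.\ the line bundle. Borrowing $\epsilon T$ is useless exactly where $T$ degenerates.

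The paper never absorbs the discrepancy. It builds the peak section in $(m+1)K_M=K_M+mK_M$ and applies Theorem \ref{th:L2} with $L=mK_M$, $h=(dV_g)^{-m}$ and $\gamma=-m\,Ric(\omega)\ge 2\varepsilon m\,\omega$ near $x$. Global positivity is then literally hypothesis (2), and the extra $K_M$ factor is measured by $g_0$ through the intrinsic norm. The density ratio at $x$ enters only through the local Gaussian computation. The paper also needs no singular weight: $v_m$ vanishes on $\{|z|<(A_m/m)^{1/2}\}$, so $u_m$ is holomorphic there. The mean-value inequality then gives $|u_m(x)|^2e^{-m\varphi(x)}\lesssim m^{-1}e^{-m\varphi(x)}$, which is negligible against $|\chi_m(x)|^2e^{-m\varphi(x)}=e^{-m\varphi(x)}$.

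Be aware that this route controls the Bergman kernel of $(m+1)K_M$ for the metric $(dV_{g_0})^{-1}\otimes(dV_g)^{-m}$. That is enough for \S6, since its $L^2$ space still lies in $\Gamma(M,(m+1)K_M)$. If you want the literal weight $(dV_g)^{-m}$ on $mK_M$, use the plain weight $(m-1)\varphi$ (plus your $\theta\log|z|^2$ term). Then pass to the target norm via a global bound $dV_{g_0}\le C\,dV_g$. This bound holds for $dV_{KE}$ when $K_M$ is big and nef, but it is not implied by (1)--(2) alone.
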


\begin{proof}
For any fixed $x\in{M}$, we can take a local coordinate $(z_1,\cdots,z_n)$ around $x$ such that $z(x)=0$,  and the K\"ahler form $\omega_0$ of $g_0$ may be written as
\[
\omega_0=\sum dz_j\wedge{d\bar{z}_j}+ O(|z|).
\]
Moreover,  if one writes $(dV_g)^{-1}=e^{-\varphi}$ near $x$,  then 
\[
\varphi(z)=\varphi(0)-\sum \lambda_{j}|z_j|^2 + 2\mathrm{Re}\,P(z)+O(|z|^3),
\]
where $P$ is a complex polynomial of degree $2$ with $P(0)=0$.   Without loss of generality,  we may assume that the coordinate neighbourhood is the unit ball $\mathbb B^n$ and it is contained in $U$; moreover,   $Ric(\omega)\leq -2\varepsilon \omega$ on $\mathbb{B}^n$ for some $\varepsilon>0$,  so that $\lambda_j\leq -2\varepsilon$ for $j=1,2,\cdots,n$.

As before, take $A_m>0$ with $A_m\rightarrow+\infty$ and $A_m/m^{1/3}\rightarrow0$ as $m\rightarrow+\infty$. Let $\chi:[0,+\infty)\rightarrow[0,1]$ be a smooth cut-off function such that $\chi|_{[0,1]}=1$ and $\chi|_{[2,+\infty)}=0$.   Set
\begin{eqnarray*}
\chi_m(z) & := & \chi\left(\left(m/A_m\right)^{1/2}z\right)e^{mP(z)} (dz_1\wedge\cdots\wedge dz_n)^{\otimes (m+1)}\\
v_m & := & e^{mP}\bar{\partial}\chi_m\otimes  (dz_1\wedge\cdots\wedge dz_n)^{\otimes (m+1)}.
\end{eqnarray*}
Note that
\begin{eqnarray}\label{eq:chi_m_L2}
\int_M|\chi_m|^2_{(dV_{g_0})^{-1}\otimes (dV_g)^{\otimes (-m)}} dV_{g_0} 
&\le & \int_{|z|\leq 2(A_m/m)^{1/2}} e^{-m\varphi(0)+m\sum^n_{j=1}\lambda_j|z_j|^2-O(m|z|^3)}(1+O(|z|))dV_0(z)\notag\\
&\le& e^{-m\varphi(x)}\left(1-C\frac{A_m^{3/2}}{m^{1/2}}\right)\int_{|z|\leq 2(A_m/m)^{1/2}} e^{m\sum^n_{j=1}\lambda_j|z_j|^2}dV_0(z)\notag\\
&=& \frac{e^{-m\varphi(x)}}{m^n}\left(1-C\frac{A_m^{3/2}}{m^{1/2}}\right)\int_{|\zeta|\leq 2 A_m^{1/2}} e^{\sum^n_{j=1}\lambda_j|\zeta_j|^2}dV_0(\zeta)\notag\\
&\sim& \frac{e^{-m\varphi(x)}}{m^n}\int_{\zeta\in\mathbb{C}^n} e^{\sum^n_{j=1}\lambda_j|\zeta_j|^2}dV_0(\zeta)\notag\\
&=& \frac{\pi^n}{m^n}\left(\prod^n_{j=1}|\lambda_j|\right)^{-1}e^{-m\varphi(x)},\ \ \ m\rightarrow+\infty.
\end{eqnarray}
Moreover,
\begin{eqnarray}\label{eq:v_m_L2}
&& \int_M|v_m|^2_{(dV_{g_0})^{-1}\otimes (dV_g)^{\otimes (-m)}} dV_{g_0}\nonumber\\
&\leq&  \frac{m}{A_m}\sup|\chi'|^2\int_{(A_m/m)^{1/2}\leq |z|\leq 2(A_m/m)^{1/2}}e^{-m\varphi(0)+m\sum^n_{j=1}\lambda_j|z_j|^2-O(m|z|^3)}(1+O(|z|))dV_0(z)\notag\\
&\leq& \frac{m e^{-m\varphi(x)}}{A_m}\sup|\chi'|^2 \left(1+C\frac{A_m^{3/2}}{m^{1/2}}\right) \int_{(A_m/m)^{1/2}\leq |z|\leq 2(A_m/m)^{1/2}}e^{-m\varepsilon|z|^2}dV_0(z)\notag\\
&=& \frac{e^{-m\varphi(x)}}{m^{n-1}A_m}\sup|\chi'|^2\left(1+C\frac{A_m^{3/2}}{m^{1/2}}\right)\int_{A_m^{1/2}\leq |\zeta|\leq 2A_m^{1/2}}e^{-\varepsilon|\zeta|^2}dV_0(\zeta)\notag\\
&\leq& C_0\frac{A_m^{n-1}e^{-\varepsilon A_m}}{m^{n-1}}e^{-m\varphi(x)},\ \ \ \forall\,m\gg1,
\end{eqnarray}
where $C_0$ is a numerical constant.

Let us take $A_m$ such that $e^{-\varepsilon A_m}= o(m^{-N})$ for all $N>0$ (e.g., $A_m=(\log m)^2$). By Theorem \ref{th:L2} and \eqref{eq:v_m_L2}, there exists a solution $u_m$ of $\bar{\partial}u_m=v_m$ on $M$,  such that 
\begin{eqnarray}\label{eq:u_m_L2}
\int_M|u_m|^2_{(dV_{g_0})^{-1}\otimes (dV_g)^{\otimes (-m)}} dV_{g_0}
&\leq& \int_M|v_m|^2_{(dV_{g_0})^{-1}\otimes (dV_g)^{\otimes (-m)},-mRic(\omega)} dV_{g_0}\notag\\
&\leq& \frac{1}{2\varepsilon m}\int_M|v_m|^2_{(dV_{g_0})^{-1}\otimes (dV_g)^{\otimes (-m)}} dV_{g_0}\notag\\
&\leq& \frac{C_0}{\varepsilon}\frac{A_m^{n-1}e^{-\varepsilon A_m}}{m^n}e^{-m\varphi(x)}\notag\\
&\leq& \frac{C_0}{\varepsilon}\frac{1}{m^{n+1}}e^{-m\varphi(x)},\ \ \ \forall\,m\gg1.
\end{eqnarray}

Let $u_m^\ast$ be a local representation of  $u_m$ at $x$.    It follows that
\begin{eqnarray*}
& & \int_{|z|<(A_m/m)^{1/2}}|u_m|^2_{(dV_{g_0})^{-1}\otimes (dV_g)^{\otimes (-m)}} dV_{g_0}\\
&=& \int_{|z|<(A_m/m)^{1/2}}\left|u_m^\ast (z)e^{-mP(z)}\right|^2e^{-m\varphi(0)+m\sum^n_{j=1}\lambda_j|z_j|^2+O(m|z|^3)}\left(1+O(|z|)\right)dV_0(z)\\
&\geq& e^{-m\varphi(x)}\left(1-C\frac{A_m^{3/2}}{m^{1/2}}\right)\int_{|z|<(A_m/m)^{1/2}}\left|u_m^\ast(z)e^{-mP(z)}\right|^2e^{m\sum^n_{j=1}\lambda_j|z_j|^2}dV_0(z)\\
&\geq& |u_m^\ast (x)|^2e^{-m\varphi(x)}\left(1-C\frac{A_m^{3/2}}{m^{1/2}}\right)\int_{|z|<(A_m/m)^{1/2}}e^{m\sum^n_{j=1}\lambda_j |z_j|^2}dV_0(z)\\
&=& |u_m^\ast (x)|^2e^{-m\varphi(x)}\frac{1}{m^n}\left(1-C\frac{A_m^{3/2}}{m^{1/2}}\right)\int_{|\zeta|<A_m^{1/2}}e^{\sum^n_{j=1}\lambda_j |\zeta_j|^2}dV_0(\zeta)
\end{eqnarray*}
in view of the mean value inequality. This combined with \eqref{eq:u_m_L2} yields
\begin{eqnarray}\label{eq:u_m_x}
|u_m^\ast(x)|^2e^{-m\varphi(x)}
&\leq& \frac{C_0}{\varepsilon} \frac{1}{m} \left(1-C\frac{A_m^{3/2}}{m^{1/2}}\right)^{-1}\left(\int_{|\zeta|<A_m^{1/2}}e^{\sum^n_{j=1}\lambda_j |\zeta_j|^2}dV_0(\zeta)\right)^{-1}e^{-m\varphi(x)}\notag\\
&\sim& \frac{C_0}{\varepsilon} \frac{1}{m}\left(\int_{\zeta\in\mathbb{C}^n}e^{\sum^n_{j=1}\lambda_j |\zeta_j|^2}dV_0(\zeta)\right)^{-1}e^{-m\varphi(x)}\notag\\
&\leq& \frac{C}{m}e^{-m\varphi(x)}.
\end{eqnarray}
Recall that $C$ denotes a generic constant depending on the geometry of $(M,g)$, but is independent of $m$.

Set $s_m:=\chi_m-u_m$.  Clearly, $s_m\in \Gamma(M,(m+1)K_M)$. It follows from \eqref{eq:chi_m_L2}, \eqref{eq:u_m_L2} and \eqref{eq:u_m_x} that
\begin{eqnarray*}
|s_m^\ast (x)|^2e^{-m\varphi(x)}
&\geq& \left(|\chi_m^\ast (x)|e^{-m\varphi(x)/2}-|u_m^\ast (x)|e^{-m\varphi(x)/2}\right)^2\\
&\geq& \left(1-\frac{C^{1/2}}{m^{1/2}}\right)^2e^{-m\varphi(x)}\\
&\sim& e^{-m\varphi(x)},\ \ \ m\rightarrow+\infty,
\end{eqnarray*}
and
\begin{eqnarray*}
&& \int_M|s_m|^2_{(dV_{g_0})^{-1}\otimes (dV_g)^{\otimes (-m)}} dV_{g_0}\\
&\leq& \left(\left(\int_M|\chi_m|^2_{(dV_{g_0})^{-1}\otimes (dV_g)^{\otimes (-m)}} dV_{g_0}\right)^{1/2}+\left(\int_M|u_m|^2_{(dV_{g_0})^{-1}\otimes (dV_g)^{\otimes (-m)}} dV_{g_0}\right)^{1/2}\right)^2\\
&\sim& \frac{\pi^n}{m^n}\left(\prod^n_{j=1}|\lambda_j|\right)^{-1}e^{-m\varphi(x)},\ \ \ m\rightarrow+\infty.
\end{eqnarray*}
These together with the extremal property \eqref{eq:extremal} yield \eqref{eq:AsympLower}.  
\end{proof}

\section{Proof of Theorem \ref{th:Main0}}
Recall that $M_0$ is the open subset of $M$ where $\lambda_j<0$ for all $1\leq{j}\leq{n}$. By \eqref{eq:plurigenera},  \eqref{eq:Bergman_upper_1}, \eqref{eq:Bergman_upper_2} and Fatou's lemma,  we have

\begin{eqnarray}\label{eq:LeBrun_M_0}
\mathrm{CanVol}(M) & = & \limsup_{m\rightarrow+\infty}\frac{P_m(M)}{m^n/n!}\leq \int_M\limsup_{m\rightarrow+\infty}\frac{B_m}{m^n/n!}dV_g\nonumber\\
&\leq& \frac{(-1)^nn!}{\pi^n}\int_{M_0}\lambda_1\cdots\lambda_n{dV_g}\nonumber\\
&\leq& \frac{n!}{\pi^n}\int_{M_0}\left(\frac{-\lambda_1-\cdots-\lambda_n}{n}\right)^n{dV_g}\nonumber\\
&\leq& \frac{n!}{(n\pi)^n}\int_M|S^-_{C}|^ndV_g,
\end{eqnarray}
i.e.,  \eqref{eq:LeBrun_1} holds,  which  implies \eqref{eq:volume_lower_1} since
$$
{\rm vol}_g (M)\ge \int_M|S^-_{C}(g)|^ndV_g
$$
if $S_C(g)\ge -1$.

Suppose there exists a Hermitian metric $g$ such that $S_C(g)\ge -1$ and
$$
{\rm vol}_g(M) = \frac{(n\pi)^n}{n!} {\rm CanVol}(M).
$$ 
In particular,  ${\rm CanVol}(M)>0$,  i.e.,  $M$ is of general type.  
On the other hand,  \eqref{eq:LeBrun_M_0} implies 
$$
{\rm vol}_g(M_0) \ge \frac{(n\pi)^n}{n!} {\rm CanVol}(M).
$$ 
Thus
$$
{\rm vol}_g(M\setminus M_0)=0
$$
and equalities in \eqref{eq:LeBrun_M_0} hold. It follows that $\lambda_1=\cdots=\lambda_n=:\lambda$ on $M_0$, hence on $M$ by continuity. Note that
\[
\lambda=\frac{1}{n}\mathrm{tr}_g(\lambda\omega)=\frac{1}{n}\mathrm{tr}_g(Ric(\omega))=\frac{S_C(g)}{n}.
\]
In case $g$ is K\"{a}hler, the Chern connection coincides with the Levi-Civita connection, and hence $g$ is also a Riemannian Einstein metric. Since the real dimension of $M$ is greater than $2$, we conclude that $\lambda$ is a constant (cf. Berger \cite[Theorem 277]{BergerBook}). Moreover, $\lambda<0$ for $M$ is of general type.

\section{Proofs  of Theorem \ref{th:Main1} and Corollary \ref{cor:holo_increasing}}

\subsection{Proof of Theorem \ref{th:Main1} when $K_M$ is big and nef}
First of all,  we have
\begin{equation}\label{eq:MinVol_S}
\mathrm{MinVol}_C(M)\geq\mathcal{I}_C^-(M).
\end{equation}
Indeed, given any Hermitian metric $g$ on $M$, the scalar curvature $S_C(g)$ is bounded from below on $M$. Since $S_C(tg)=t^{-1}S_C(g)$ and $dV_{tg}=t^ndV_g$, we see that the integral $\int_M|S_C^-(g)|^ndV_g$ is invariant under scaling. Then we may assume that $S_C(g)\geq-1$, so that
\[
\int_M|S_C^-(g)|^ndV_g\leq\int_M1dV_g\leq\mathrm{vol}_g(M),
\]
from which \eqref{eq:MinVol_S} immediately follows. 

By \eqref{eq:LeBrun_1} and \eqref{eq:MinVol_S},  it suffices to verify the follow inequalities:
\begin{equation}\label{eq:MinVol_CanVol}
\mathrm{MinVol}_C(M)
\leq \frac{(n\pi)^n}{n!}\mathrm{CanVol}(M)
\end{equation}
\begin{equation}\label{eq:big_nef_2}
\mathcal{I}_C(M)\leq\frac{(n\pi)^n}{n!}\mathrm{CanVol}(M).
\end{equation}

Note that a compact K\"{a}hler manifold with a big line bundle is always algebraic, in view of a celebrated result of Siu \cite{Siu84}.   Thus by Theorem \ref{th:Tsuji},  we know that there exists an analytic subset $E\subset M$  such that  the solution $g_t$ of the normalized K\"ahler-Ricci flow \eqref{eq:KR2} converges in $C^\infty_{\rm loc}(M\setminus E)$ to the singular K\"ahler-Einstein metric $g_{KE}$.  Moreover,  the K\"ahler form $\omega_{KE}$ of  $g_{KE}$  extends to a closed positive current on $M$.   
Consider the Bergman space of $mK_M$ associated to $dV_{KE}$,  i.e.,  
$$
A^2_{g_0,(dV_{KE})^{-1}}(M,mK_M)
$$
and the corresponding Bergman kernel
$$
B_{g_0,(dV_{KE})^{-1},mK_M}(x).
$$
Since
\[
Ric(\omega_{KE})=-\omega_{KE},\qquad \text{on}\ M\setminus E,
\]
it follows from \eqref{eq:AsympLower} that
\begin{equation}\label{eq:limit_Bergman_M_E}
\liminf_{m\rightarrow+\infty}\frac{B_{g_0,(dV_{KE})^{-1},mK_M}(x)}{m^n}\ge  \frac{1}{\pi^n}\cdot \frac{dV_{KE}}{dV_{g_0}}(x),\ \ \ \forall\ x\in M\setminus E.
\end{equation}
Then we have
\begin{eqnarray}\label{eq:lower_CanVol_1}
&& \liminf_{m\rightarrow+\infty}\frac{\dim A^2_{g_0,(dV_{KE})^{-1}}(M,mK_M)}{m^n/n!}\nonumber\\
&=& n!\liminf_{m\rightarrow+\infty}\int_{M}\frac{B_{g_0,(dV_{KE})^{-1},mK_M}(x)}{m^n}dV_{g_0}(x)\notag\\
&\geq& \frac{n!}{\pi^n}\mathrm{vol}_{KE}(M\setminus E)\ \ \ (\text{Fatou's lemma and\ }\eqref{eq:limit_Bergman_M_E}).
\end{eqnarray}
Thus 
\begin{eqnarray}\label{eq:lower_CanVol_2}
\mathrm{CanVol}(M)
= \limsup_{m\rightarrow+\infty}\frac{P_m(M)}{m^n/n!}
& \geq & \limsup_{m\rightarrow+\infty}\frac{\dim A^2_{g_0,(dV_{KE})^{-1}}(M,mK_M)}{m^n/n!}\nonumber\\
& \geq & \frac{n!}{\pi^n}\mathrm{vol}_{KE}(M\setminus E).
\end{eqnarray}

On the other side,   it follows from \eqref{eq:volumeCompare} that there exists for each $\delta>0$, a neighbourhood $E_\delta$ of $E$ such that $\mathrm{vol}_{g_t}(E_\delta)<\delta$ for $t\in[0,\infty)$.  Since $g_t\rightarrow g_{KE}$ in $C^\infty(M\setminus E_\delta)$,  we have $\mathrm{vol}_{g_t} (M\setminus E_\delta)\leq\mathrm{vol}_{KE}(M\setminus E_\delta)+\delta$ when $t\gg1$,  so that
\begin{equation}\label{eq:lower_CanVol_3}
\mathrm{vol}_{g_t} (M)
\leq \mathrm{vol}_{g_t}(M\setminus E_\delta)+\delta
\leq \mathrm{vol}_{KE}(M\setminus E_\delta)+2\delta
\leq \frac{\pi^n}{n!}\mathrm{CanVol}(M)+2\delta,
\end{equation}
in view of \eqref{eq:lower_CanVol_2}.

By \eqref{eq:ScalarLower},  the K\"{a}hler metric $(n+C_0 e^{-t})g_t$ on $M$ satisfies $S_C\geq-1$.  Thus
\begin{equation}\label{eq:MinVolUpper}
\mathrm{MinVol}_C(M)
\leq (n+C_0e^{-t})^n\mathrm{vol}_{g_t}(M).
\end{equation}
This combined with \eqref{eq:lower_CanVol_3} yields
\[
\mathrm{MinVol}_C(M)
\leq (n+C_0 e^{-t})^n\left(\frac{\pi^n}{n!}\mathrm{CanVol}(M)+2\delta\right),  
\]
when $t\gg1$ and $0<\delta\ll1$.  Letting $t\rightarrow\infty$ and $\delta\rightarrow0$, we get \eqref{eq:MinVol_CanVol}. 

For \eqref{eq:big_nef_2},  we infer from 
 Theorem \ref{th:scalar_bdd} that there exists a constant $C>0$ such that 
\begin{equation}\label{eq:scalar_bdd_upper}
|S_C(g_t)|\leq C,\ \ \ \forall\ t\in[0,\infty),
\end{equation}
so that
\begin{equation}\label{eq:I_M_small}
\int_{E_\delta}|S_C(g_t)|^ndV_{g_t} \leq C^n\mathrm{vol}_{g_t} (E_\delta)<C^n\delta.
\end{equation}
Moreover, $S_C(g_t)$ converges uniformly on $M\setminus E_\delta$ to $S_C(g_{KE})=-n$,  so that
\begin{equation}\label{eq:I_M_big}
\lim_{t\rightarrow\infty}\int_{M\setminus E_\delta}|S_C(g_t)|^ndV_{g_t}  = n^n\mathrm{vol}_{KE}(M\setminus E_\delta)\leq \frac{(n\pi)^n}{n!}\mathrm{CanVol}(M),
\end{equation}
in view of \eqref{eq:lower_CanVol_2}. By \eqref{eq:I_M_small} and \eqref{eq:I_M_big}, we have
\begin{equation}\label{eq:I_C_limit}
\mathcal{I}_C (M)\leq\limsup_{t\rightarrow\infty}\int_M|S_C(g_t)|^ndV_{g_t} \leq \frac{(n\pi)^n}{n!}\mathrm{CanVol}(M)+C^n\delta,
\end{equation}
from which  \eqref{eq:big_nef_2} immediately follows.

\subsection{Proof of Theorem \ref{th:Main1} when $K_M$ is nef but not big}
 Recall that the numerical dimension of  the nef line bundle $K_M$ is defined by
\begin{equation}\label{eq:nd}
\mathrm{nd}(M)=\mathrm{nd}(K_M)=\max\left\{k:\ 1\leq k\leq n, c_1(K_M)^k\neq0\ \text{in}\ H^{k,k}(M,\mathbb{R})\right\}.
\end{equation}
Take a smooth $(1,1)$ form $\gamma$ with $c_1(K_M)=[\gamma]$ and  a smooth volume form $\Omega$ on $M$ such that $\gamma=i\partial\bar{\partial}\log\Omega$ and $\int_M\Omega=1$.  Let $g_t$ be the solution to the normalized K\"{a}hler-Ricci flow \eqref{eq:KR2} and $\omega_t$ the associated K\"{a}hler form. Set
\[
\widehat{\omega}_t:=(1-e^{-t})\gamma+e^{-t}\omega_0.
\]
It follows from \eqref{eq:KR2} that
\[
\frac{d}{dt}[\omega_t]=[-\mathrm{Ric}(\omega_t)-\omega_t]=c_1(K_M)-[\omega_t],
\]
i.e.,
\[
\frac{d}{dt}(e^t[\omega_t])=c_1(K_M)e^t.
\]
Solving this ODE in $H^{1,1}(M,\mathbb{R})$, we obtain
\[
[\omega_t]=(1-e^{-t})[c_1(K_M)]+e^{-t}[\omega_0]=[\widehat{\omega}_t].
\]
In particular,
\begin{equation}\label{eq:volume_int_cohomology}
\mathrm{vol}_{g_t}(M)=\int_M\widehat{\omega}_t^n.
\end{equation}
By \eqref{eq:ScalarLower}, we have
\begin{equation}\label{eq:vol_nef_not_big}
\mathrm{MinVol}_C(M)\leq \left(n+C_0e^{-t}\right)^n\mathrm{vol}_{g_t}(M)=\left(n+C_0e^{-t}\right)^n\int_M\widehat{\omega}_t^n.
\end{equation}
Thus it remains to verify $\int_M\widehat{\omega}_t^n\rightarrow0$ as $t\rightarrow+\infty$. This follows immediately from  \cite[Lemma 9.1]{GPSS}. For the sake of completeness, we shall include the very simple proof as follows.  

Since $[\gamma]^k=c_1(K_M)^k=0$ in $H^{k,k}(M,\mathbb{R})$ for $k>\mathrm{nd}(M)$, so
\begin{eqnarray*}
\int_M\widehat{\omega}_t^n
&=& \int_M\left(\gamma + e^{-t}(\omega_0-\gamma)\right)^n\\
&=& \sum^n_{l=0}\binom{n}{l}e^{-(n-l)t}\int_M\gamma^l\wedge(\omega_0-\gamma)^{n-l}\\
&=& e^{-(n-\mathrm{nd}(M))t}\binom{n}{\mathrm{nd}(M)}\int_M\gamma^{\mathrm{nd}(M)}\wedge\left(\omega_0-\gamma\right)^{n-\mathrm{nd}(M)}+o(e^{-(n-\mathrm{nd}(M))t}),\ \ \ t\rightarrow+\infty.
\end{eqnarray*}
In particular, we get
\begin{equation}\label{eq:volume_numerical_dim_1}
C^{-1}e^{-(n-\mathrm{nd}(M))t}\mathrm{vol}_{g_0}(M)
\leq \mathrm{vol}_{g_t}(M)
\leq Ce^{-(n-\mathrm{nd}(M))t}\mathrm{vol}_{g_0}(M).
\end{equation}
for some constant $C$ independent of $t$. As $\mathrm{Kod}(M)<n$ implies $\mathrm{nd}(M)<n$ (cf. \cite{DemaillyBook}), it follows that $\mathrm{vol}_{g_t}(M)\rightarrow0$ as $t\rightarrow+\infty$.

\vspace{2mm}

Finally,  we will show  $\mathcal{I}_C(M)=0$.  Consider the average Chern scalar curvature of $g_t$
\begin{equation}\label{eq:average_sc}
\overline{S}_t
:=\frac{1}{\mathrm{vol}_{g_t}(M)}\int_M S_C(g_t)\,dV_{g_t}.
\end{equation}
We claim that the function $t\mapsto\overline{S}_t$ is bounded on $[0,\infty)$. Since $g_t$ is a K\"ahler metric, we have the standard identity
\[
S_C(g_t)\,\omega_t^n
= n\,Ric(\omega_t)\wedge \omega_t^{n-1},
\]
so that
\[
\int_M S_C(g_t)\,dV_{g_t}
=n\int_M Ric(\omega_t)\wedge \omega_t^{n-1}.
\]
Recall that $[Ric(\omega_t)]=-c_1(K_M)=-[\gamma]$, so
\begin{equation}\label{eq:scalar_integral}
\int_M S_C(g_t)\,dV_{g_t}
= -n\int_M\gamma\wedge\omega_t^{n-1}
= -n\int_M\gamma\wedge\widehat{\omega}_t^{n-1}.
\end{equation}
A similar argument as above yields
\begin{eqnarray*}
\int_M\gamma\wedge\widehat{\omega}_t^{n-1}
&=& \int_M\gamma\wedge\left(\gamma + e^{-t}(\omega_0-\gamma)\right)^{n-1}\\
&=& \sum^{n-1}_{l=0}\binom{n-1}{l}e^{-(n-1-l)t}\int_M\gamma^{l+1}\wedge(\omega_0-\gamma)^{n-1-l}.
\end{eqnarray*}
Note that $[\gamma]^{l+1}=c_1(K_M)^{l+1}=0$ in $H^{l+1,l+1}(M,\mathrm{R})$ when $l\geq\mathrm{nd}(M)$. We get
\[
\left|\int_M\gamma\wedge\widehat{\omega}_t^{n-1}\right|=O(e^{-(n-\mathrm{nd}(M))t}).
\]
This combined with \eqref{eq:average_sc}, \eqref{eq:volume_numerical_dim_1} and \eqref{eq:scalar_integral} gives
\begin{equation}\label{eq:average_sc_bdd}
|\overline{S}_t|\leq C,
\qquad \forall\,t\geq 0.
\end{equation}
Here $C$ is independent of $t$.

We will proceed the argument through suitable conformal deformation. Set
\[
h_t=e^{u_t/n}g_t
\]
for a real-valued smooth function $u_t$ on $M$. The volume form is given by
\begin{equation}\label{eq:volume_form_conformal_t}
dV_{h_t}=e^{u_t}dV_{g_t},
\end{equation}
and a straightforward calculation shows
\begin{equation}\label{eq:sc_conformal_t}
S_C(h_t)=e^{-u_t/n}\left(S_C(g_t)-\Delta_{g_t} u_t\right),
\end{equation}
where $\Delta_{g_t}$ denotes the complex Laplacian with respect to the metric $g_t$, i.e.,
\[
\Delta_{g_t}\varphi=\mathrm{tr}_{g_t}(i\partial\bar{\partial}\varphi),\ \ \ \forall\,\varphi\in C^\infty(M).
\]
For each $t\geq 0$, the Poisson equation
\begin{equation}\label{eq:Poisson}
\Delta_{g_t}u_t = S_C(g_t)-\overline{S}_t
\end{equation}
admits a smooth solution $u_t$ since the right-hand side has integral zero  (see \cite[Theorem 2.12]{SzekelyhidiBook}).  By \eqref{eq:sc_conformal_t} and \eqref{eq:Poisson}, we get
\[
S_C(h_t)=e^{-u_t/n}\overline{S}_t.
\]
This together with \eqref{eq:volume_form_conformal_t} and \eqref{eq:average_sc_bdd} yield
\[
\int_M|S_C(h_t)|^ndV_{h_t}=\int_M|\overline{S}_t|^ndV_{g_t}\leq C^n\mathrm{vol}_{g_t}(M).
\]
Since $\mathrm{nd}(M)<n$, we conclude
\[
\int_M|S_C(h_t)|^ndV_{h_t}\rightarrow0,\ \ \ t\rightarrow+\infty,
\]
in view of \eqref{eq:volume_numerical_dim_1}.
This implies $\mathcal I_C(M)=0$. 

\subsection{Proof of Corollary \ref{cor:holo_increasing}}
The pull-back $F^*K_M$ of $K_M$ is a line bundle over $M'$, whose volume can be defined in a similar way:
\[
\mathrm{vol}(F^*K_M):= \limsup_{m\rightarrow+\infty}\frac{\dim\Gamma(M,F^*K_M)}{m^n/n!}.
\]
It follows from \cite[Lemma 4.3]{Holschbach10} (see also \cite[Lemma 2.9]{Cutkosky24}) that
\begin{equation}\label{eq:deg_vol_pullback}
\mathrm{vol}(F^*K_M)=\deg(F)\cdot\mathrm{CanVol}(M).
\end{equation}
Let  $R_F$ denote the line bundle associated to the effective divisor locally given by the zeros of the complex Jacobian determinant of $F$. We have
\[
K_{M'}=F^*K_M+ R_F.
\]
By \cite[Example 2.2.48]{LazarsfeldBook}, we know that
\begin{equation}\label{eq:volume_increasing_effective}
\mathrm{CanVol}(M')=\mathrm{vol}(F^*K_M+ R_F)\geq\mathrm{vol}(F^*K_M).
\end{equation}
Thus
\[
\mathrm{CanVol}(M')\geq \deg(F)\cdot\mathrm{CanVol}(M).
\]
This together with Corollary \ref{cor:lower_gap} and Theorem \ref{th:Main1} yield \eqref{eq:deg_1}-\eqref{eq:deg_3}. Moreover, if $F$ is a covering map, then $R_F=0$ and $K_{M'}$ is also nef, so that
\[
\mathrm{CanVol}(M')=\mathrm{vol}(F^*K_M)=\deg(F)\cdot\mathrm{CanVol}(M).
\]
Thus equalities in \eqref{eq:deg_1}-\eqref{eq:deg_3} hold, in view of Theorem \ref{th:Main1}.

\begin{remark}
Conversely, if equalities in \eqref{eq:deg_1}-\eqref{eq:deg_3} hold, then $F$ is not necessarily a covering map. A counterexample is given by Theorem \ref{th:blow_up}.
\end{remark}

\section{Proof of Theorem \ref{th:blow_up}}\label{sec:blow_up}

Since the canonical volume is a bimeromorphic invariant (see, e.g., \cite[Proposition 2.2.43]{LazarsfeldBook}, \cite[\S\,2]{Ueno}), it follows that
\begin{equation}\label{eq:canvol_bimeromorphic}
\mathrm{CanVol}(\widehat M)=\mathrm{CanVol}(M).
\end{equation}
By \eqref{eq:canvol_bimeromorphic} and Theorem \ref{th:Main1}, it suffices to prove that $\mathrm{MinVol}_C$ and $\mathcal{I}_C^-$ (resp. $\mathcal{I}_C$) are invariant under the blow-up at finite points when $K_M$ is nef (resp. big and nef). For simplicity,  we will only treat   the  case of blowing-up at one point and leave the general case  to interested readers.  Let
\[
\varpi:\widehat{M}=\mathrm{Bl}_pM\rightarrow M
\]
be the blow-up at $p\in M$.

From  the proof of Theorem \ref{th:Main1}, we learn that if $K_M$ is nef,  then for any $\varepsilon>0$, there exists a K\"{a}hler metric $g$ on $M$ such that $S_C(g)\geq-1$ and
\begin{equation}\label{eq:approximating_metric_I_minus}
\int_M|S_C^-(g)|^ndV\leq \frac{(n\pi)^n}{n!}\mathrm{CanVol}(M)+\varepsilon,
\end{equation}
\begin{equation}\label{eq:approximating_metric_I}
\int_M|S_C(g)|^ndV\leq \frac{(n\pi)^n}{n!}\mathrm{CanVol}(M)+\varepsilon,
\end{equation}
\begin{equation}\label{eq:approximating_metric_vol}
\mathrm{vol}_g(M)\leq \frac{(n\pi)^n}{n!}\mathrm{CanVol}(M)+\varepsilon.
\end{equation}

The main idea is to  modify  $g$ to obtain certain metric on $\widehat{M}$ with analogous properties.  More precisely, we will divide the proof into four steps:
\begin{itemize}
\item[Step 1:] Glue any given metric with the Burns-Simanca metric near the exceptional divisor;
\item[Step 2:] Derive quantitative estimates of the scalar curvature and the volume  for the glued metric;
\item[Step 3:] Deduce the equalities for $\mathcal{I}_C$ and $\mathcal{I}_C^-$;
\item[Step 4:] Apply a conformal correction to achieve the sharp lower bound $S_C\ge -1$, and conclude the equality for $\mathrm{MinVol}_C$.
\end{itemize}

\medskip
\noindent\textbf{Step 1: Construction of the glued metric.}
We will glue the background metric $g$ on $M$ with a rescaled
Burns-Simanca metric near the exceptional divisor,   following
LeBrun \cite{LeBrun96} and Sz{\'e}kelyhidi \cite{Szekelyhidi12}.

Since $g$ is K\"{a}hler, the K\"ahler form $\omega$ of $g$ can be written as
\[
\omega=i\partial\bar{\partial}\left(\frac{|z|^2}{2}+\varphi(z)\right)
\]
in a small coordinate ball $B_\rho$ around $p$, 
where $\varphi(z)=O(|z|^4)$ as $z\rightarrow0$. Let $\delta>0$ be a small parameter. Consider the rescaled Burns-Simanca metric (see \S\,\ref{subsec:BS})
\[
\omega^\delta_{BS}
:=\varpi^*i\partial\bar{\partial}\left(\frac{|z|^2}{2}+\delta^2\psi\left(\frac{z}{\delta}\right)\right)
\]
on $\varpi^{-1}(B_\rho)$, where $z\in B_\rho\setminus\{0\}$. We will define a modified metric $g_\delta$ on $\widehat{M}$ by gluing $g$ and $g_{BS}^\delta$ as follows. Let $\chi:[0,\infty)\to[0,1]$ be a smooth cut-off function with $\chi\equiv 0$ on $[0,1]$ and $\chi\equiv 1$ on $[2,\infty)$.   Take
\[
r_\delta:=\delta^\alpha,\qquad 0<\alpha<{1}/{2}.
\]
Clearly,  $B_{2r_\delta}\subset\subset B_\rho$ for $\delta\ll 1$.  On $B_\rho$,  we define
\begin{equation}\label{eq:Phi_delta}
\Phi_\delta(z)
:=\frac{|z|^2}{2}
+\chi\left(\frac{|z|}{r_\delta}\right)\varphi(z)
+\left(1-\chi\left(\frac{|z|}{r_\delta}\right)\right)\delta^2\psi\left(\frac{z}{\delta}\right).
\end{equation}
Note that $\varpi^*i\partial\bar{\partial} \Phi_\delta$ is a real $(1,1)$-form on $\widehat{M}$, which coincides with $\omega^\delta_{BS}$ (resp. $\varpi^*\omega$) on $\varpi^{-1}(B_{r_\delta})$ (resp. $\varpi^{-1}(B_\rho\setminus B_{2r_\delta})$). The glued metric $g_\delta$ is defined through
\begin{equation}\label{eq:glue_Kahler}
\omega_\delta:=\begin{cases}
\varpi^*i\partial\bar{\partial} \Phi_\delta,\ \ \ &\text{on }\varpi^{-1}(B_\rho),\\
\varpi^*\omega,\ \ \ &\text{on }\widehat{M}\setminus\varpi^{-1}(B_\rho),
\end{cases}
\end{equation}
provided that  $\omega_\delta$ is positive on the gluing annulus
\[
A_\delta:=\varpi^{-1}(B_{2r_\delta}\setminus B_{r_\delta}).
\]
To see this,  let us use rescaled coordinates $w=z/r_\delta$.  Consider the holomorphic map
\[
\iota:\{0<|w|\leq\rho/r_\delta\}\xrightarrow{z=r_\delta w}\left\{0<|z|\leq \rho\right\}\xrightarrow{\varpi^{-1}}\widehat{M}
\]
and define the $(1,1)$-tensor
\[
\widetilde{g}_\delta:=r_\delta^{-2}\iota^*g_\delta
\]
on $\{0<|w|\leq\rho/r_\delta\}$. With respect to the coordinate $w$, the K\"{a}hler form $\widetilde{\omega}_\delta$ of $\widetilde{g}_\delta$ can be written as
\[
\widetilde{\omega}_\delta=i\partial\bar{\partial}\,\widetilde{\Phi}_\delta,
\]
where
\[
\widetilde{\Phi}_\delta(w)=\frac{|w|^2}{2}
+\chi\left(|w|\right)r_\delta^{-2}\varphi(r_\delta w)
+\bigl(1-\chi\left(|w|\right)\bigr)r_\delta^{-2}\delta^2\psi\left(\frac{r_\delta w}{\delta}\right),\ \ \ 0<|w|\leq\rho/r_\delta.
\]
Denote by $g_{\mathrm{eucl}}$  the Euclidean metric on $\mathbb{C}^n$ (whose K\"ahler form is given by $i\partial\bar{\partial}(|w|^2/2)$).

\begin{lemma}\label{lm:shell}
Set
\[
A:=\left\{\frac{1}{2}\le |w|\le 3\right\}\subset \{0<|w|\leq\rho/r_\delta\}.
\]
Then
\[
\|\widetilde{g}_\delta-g_{\mathrm{eucl}}\|_{C^2(A)}=O(r_\delta^2),\qquad \delta\rightarrow0.
\]
In particular, $\widetilde{g}_\delta$ converges to $g_{\mathrm{eucl}}$ in the $C^2$ topology on $A$.
\end{lemma}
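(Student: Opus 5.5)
Since $\widetilde{\omega}_\delta=i\partial\bar{\partial}\widetilde{\Phi}_\delta$ and $g_{\mathrm{eucl}}$ has potential $|w|^2/2$, the coefficients of $\widetilde{g}_\delta-g_{\mathrm{eucl}}$ are the second derivatives $\partial_{w_j}\partial_{\bar w_k}$ of
\[
R_\delta(w):=\chi(|w|)\,r_\delta^{-2}\varphi(r_\delta w)+\bigl(1-\chi(|w|)\bigr)\,r_\delta^{-2}\delta^2\psi\left(\frac{r_\delta w}{\delta}\right).
\]
So the plan is to show $\|R_\delta\|_{C^4(A)}=O(r_\delta^2)$, which gives the $C^2$ bound on the metric coefficients. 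The cut-off $\chi(|w|)$ and its derivatives are bounded on $A$ uniformly in $\delta$, and $A$ stays away from $w=0$. By the Leibniz rule it therefore suffices to show that each of
\[
F_\delta(w):=r_\delta^{-2}\varphi(r_\delta w),\qquad G_\delta(w):=r_\delta^{-2}\delta^2\psi\left(\frac{r_\delta w}{\delta}\right)
\]
is $O(r_\delta^2)$ in $C^4(A)$. The cross terms between derivatives of $\chi$ and derivatives of $F_\delta$ or $G_\delta$ are then controlled automatically.

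For $F_\delta$ I will use Taylor expansion. Since $\varphi$ is smooth near $0$ with $\varphi=O(|z|^4)$, all derivatives satisfy $\partial^\beta\varphi(z)=O(|z|^{4-|\beta|})$ for $|\beta|\le 4$. The chain rule gives
\[
\partial_w^\beta F_\delta(w)=r_\delta^{|\beta|-2}(\partial^\beta\varphi)(r_\delta w)=O\bigl(r_\delta^{|\beta|-2}\,r_\delta^{4-|\beta|}\bigr)=O(r_\delta^2)
\]
uniformly for $|w|\le 3$.

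For $G_\delta$ I will use the asymptotics \eqref{eq:BS_infty}. Set $R:=r_\delta/\delta=\delta^{\alpha-1}\to\infty$, so on $A$ the argument $Rw$ has modulus comparable to $R$. The expansion \eqref{eq:BS_infty} only controls $\psi$ itself, but I need derivatives up to order $4$. I will use the standard fact that the expansion can be differentiated, $\partial^\beta\bigl(\psi+|\zeta|^{4-2n}\bigr)=O(|\zeta|^{2-2n-|\beta|})$. This follows because $\psi$ is radial and solves the scalar-flat ODE, or it can be quoted from \cite{Szekelyhidi12}. In particular $\partial^\beta\psi(\zeta)=O(|\zeta|^{4-2n-|\beta|})$. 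Then
\[
\partial^\beta_w G_\delta=r_\delta^{-2}\delta^2R^{|\beta|}(\partial^\beta\psi)(Rw)=O\bigl(r_\delta^{-2}\delta^2R^{4-2n}\bigr)=O\bigl(\delta^{2n-2}r_\delta^{2-2n}\bigr).
\]
When $n=2$, $\psi=\log|\zeta|^2$, and one checks directly that $G_\delta=r_\delta^{-2}\delta^2\bigl(\log|w|^2+\log R^2\bigr)$. The constant term is killed by $i\partial\bar\partial$, although it does enter $R_\delta$ through the cut-off, where it contributes $O(\delta^2r_\delta^{-2}\log(1/\delta))$. Using $r_\delta=\delta^\alpha$, the $G_\delta$-contribution is $O(\delta^{(2n-2)(1-\alpha)})$ for $n\ge3$ and $O(\delta^{2-2\alpha}|\log\delta|)$ for $n=2$. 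To compare with $r_\delta^2=\delta^{2\alpha}$, note that $(2n-2)(1-\alpha)\ge 2(1-\alpha)>2\alpha$ precisely because $\alpha<1/2$. Hence this contribution is $o(r_\delta^2)$, even after absorbing the logarithm, and the lemma follows.

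The main obstacle is the Burns--Simanca term. One issue is justifying the differentiated asymptotics of $\psi$. The other is handling the constant part of $\log$ in the $n=2$ case, and more generally any additive constant in $\psi$, that gets multiplied by derivatives of $\chi$. This is exactly where the restriction $\alpha<1/2$ enters. If a constant term were present for $n\ge 3$, it would give $\delta^2r_\delta^{-2}=\delta^{2-2\alpha}$, which is still $o(\delta^{2\alpha})$. So the estimate is robust to this issue.
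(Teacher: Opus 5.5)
Your proposal is correct and follows essentially the same route as the paper. Both arguments reduce the claim to a $C^4(A)$ bound on the potential difference $\Theta_\delta=\widetilde{\Phi}_\delta-|w|^2/2$. The $\varphi$-part is then $O(r_\delta^2)$ because $\varphi=O(|z|^4)$. The Burns--Simanca part is $O(\delta^{(2n-2)(1-\alpha)})$ for $n\ge3$ and $O(\delta^{2(1-\alpha)}\log(1/\delta))$ for $n=2$, both $o(r_\delta^2)$ precisely because $\alpha<1/2$. Your explicit points that the expansion \eqref{eq:BS_infty} must be differentiated (which the paper uses tacitly) and that the additive constant $\log R^2$ is hit by derivatives of $\chi$ are useful clarifications rather than departures from the paper's proof.
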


\begin{proof}
Set
\[
\Theta_\delta(w)
:=\widetilde{\Phi}_\delta(w)-\frac{|w|^2}{2}
:=\chi(|w|)\,r_\delta^{-2}\varphi(r_\delta w)
+\bigl(1-\chi(|w|)\bigr)r_\delta^{-2}\delta^2\psi\left(\frac{r_\delta w}{\delta}\right).
\]
It suffices to show that
\begin{equation}\label{eq:C4_converge}
\|\Theta_\delta\|_{C^4(A)}:=\sup_{w\in A}\sum^4_{k=1}|D^k_w\Theta_\delta(w)|=O(\delta^{2\alpha})=O(r_\delta^2)
\end{equation}
as $\delta\rightarrow0$. We have
\begin{equation}\label{eq:C4_converge_0}
|D^k_w\Theta_\delta(w)|\lesssim \sum^k_{j=0}r_\delta^{-2+j}|D^j_z\varphi(r_\delta w)|+\sum^k_{j=0}\left(\frac{r_\delta}{\delta}\right)^{-2+j}\left|D^j_\zeta\psi\left(\frac{r_\delta w}{\delta}\right)\right|,\ \ \ 0\leq k\leq 4,
\end{equation}
where $\zeta=z/\delta=r_\delta w/\delta$ and the implicit constant is numerical. Since $\varphi(z)=O(|z|^4)$ as $z\rightarrow0$, we have $|D_z^j\varphi(r_\delta w)|=O(r_\delta^{4-j})$ as $\delta\rightarrow0$ uniformly on $A$. Thus
\begin{equation}\label{eq:C4_converge_1}
\sum^k_{j=0}r_\delta^{-2+j}|D^j_z\varphi(r_\delta w)|=O(r_\delta^2)=O(\delta^{2\alpha}).
\end{equation}
On the other hand, for $n\ge 3$, \eqref{eq:BS_infty} gives
\[
\left|D^j_\zeta\psi\left(\frac{r_\delta w}{\delta}\right)\right|
=   O\left(\left|\frac{r_\delta w}{\delta}\right|^{4-2n-j}\right)=O\left(\left(\frac{r_\delta}{\delta}\right)^{4-2n-j}\right)
\]
uniformly on $A$,  so that
\begin{equation}\label{eq:C4_converge_2}
\sum^k_{j=0}\left(\frac{r_\delta}{\delta}\right)^{-2+j}\left|D^j_\zeta\psi\left(\frac{r_\delta w}{\delta}\right)\right|
:=O\left(\left(\frac{r_\delta}{\delta}\right)^{2-2n}\right)=O(\delta^{(2n-2)(1-\alpha)}).
\end{equation}
For $n=2$, we have $\psi(\zeta)=\log|\zeta|^2$, and for $j\geq1$,
\[
\left|D^j_\zeta\psi\left(\frac{r_\delta w}{\delta}\right)\right|=O\left(\left|\frac{r_\delta w}{\delta}\right|^{-j}\right)=O\left(\left(\frac{r_\delta}{\delta}\right)^{-j}\right)
\]
holds uniformly on $A$. Thus
\begin{eqnarray}\label{eq:C4_converge_3}
\sum^k_{j=0}\left(\frac{r_\delta}{\delta}\right)^{-2+j}\left|D^j_\zeta\psi\left(\frac{r_\delta w}{\delta}\right)\right|
&=& \left(\frac{r_\delta}{\delta}\right)^{-2}\left|\log\left|\frac{r_\delta w}{\delta}\right|^2\right|+\sum^k_{j=1}\left(\frac{r_\delta}{\delta}\right)^{-2+j}\left|D^j_\zeta\psi\left(\frac{r_\delta w}{\delta}\right)\right|\notag\\
&=& O\left(\delta^{2(1-\alpha)}\log\frac{1}{\delta}\right)+O\left(\delta^{2(1-\alpha)}\right)\notag\\
&=& O\left(\delta^{2(1-\alpha)}\log\frac{1}{\delta}\right).
\end{eqnarray}
By \eqref{eq:C4_converge_0}, \eqref{eq:C4_converge_1}, \eqref{eq:C4_converge_2} and
\eqref{eq:C4_converge_3}, we have
\[
\|\Theta_\delta\|_{C^4(A)}=
\begin{cases}
O(\delta^{2\alpha})+O(\delta^{(2n-2)(1-\alpha)}),\ \ \ &n\geq3,\\
O(\delta^{2\alpha})+O(\delta^{2(1-\alpha)}\log(1/\delta)),\ \ \ &n=2.
\end{cases}
\]
Since $0<\alpha<1/2$,  the assertion immediately follows. 
\end{proof}

As a consequence, $\omega_\delta=r_\delta^2(\iota^{-1})^*\widetilde{\omega}_\delta$ is positive on $A_\delta$ for all sufficiently small $\delta$. Hence $g_\delta$ is a genuine K\"{a}hler metric on $\widehat M$.

\medskip
\noindent\textbf{Step 2: Gluing estimates.}

Let us first show the following
\begin{lemma}\label{lm:BS_bounded}
$\omega_{BS}^\delta$ is uniformly bounded on $\varpi^{-1}(B_\rho)$ as $\delta\rightarrow0$.
\end{lemma}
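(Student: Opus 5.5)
Throughout I read ``uniformly bounded'' as bounded above, with respect to a fixed smooth reference K\"ahler metric $\widehat\omega$ on $\varpi^{-1}(B_\rho)$, by a constant independent of $\delta$. A natural choice is $\widehat\omega:=\pi^*\omega_{FS}+\varpi^*i\partial\bar\partial(|z|^2/2)$, which is smooth and positive on $\varpi^{-1}(B_\rho)$, including across the exceptional divisor. The aim is $\omega^\delta_{BS}\le C\widehat\omega$ with $C$ independent of $\delta$. I would use the decomposition \eqref{eq:BS_FS} together with the scaling behaviour of $\psi$.

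The first step is to rewrite the potential. From \eqref{eq:BS_zero},
$$\delta^2\psi(z/\delta)=a\delta^2\log|z|^2-a\delta^2\log\delta^2+\delta^2\eta(|z|^2/\delta^2).$$
The constant term disappears under $i\partial\bar\partial$. Using $i\partial\bar\partial\log|z|^2=\pi^*\omega_{FS}$ as computed in \S\ref{subsec:BS}, this gives
$$\omega^\delta_{BS}=a\delta^2\pi^*\omega_{FS}+\varpi^*i\partial\bar\partial\Big(\frac{|z|^2}{2}+\delta^2\eta(|z|^2/\delta^2)\Big).$$
The first term is at most $a\delta^2\widehat\omega$, so it is harmless.

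For the second term I set $\zeta=z/\delta$. Then $\varpi^*i\partial\bar\partial(\delta^2\eta(|z|^2/\delta^2))=\varpi^*i\partial_\zeta\bar\partial_\zeta\,\eta(|\zeta|^2)$, written in $\zeta$-coordinates; this is just the dilation pullback of the fixed form $i\partial\bar\partial\eta(|\zeta|^2)$. Measured against the Euclidean form $i\partial\bar\partial|\zeta|^2/2$, whose dilation pullback is $i\partial\bar\partial|z|^2/2$, the coefficient is scale invariant. It therefore suffices to show that $\sup_{\zeta\ne0}|i\partial\bar\partial\eta(|\zeta|^2)|_{g_{\mathrm{eucl}}}<\infty$. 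Granting this, $\varpi^*i\partial\bar\partial\eta\le C\varpi^*i\partial\bar\partial|z|^2/2\le C\widehat\omega$, and the whole estimate follows.

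The main step is the bound on $\eta$, i.e.\ on the Hessian of $\eta(|\zeta|^2)$. I split into a compact region and a region near infinity.
\begin{itemize}
\item On $|\zeta|\le R$: $\eta$ is smooth on $[0,\infty)$, so $\eta(|\zeta|^2)$ is a smooth function on $\mathbb C^n$ and its Hessian is bounded there.
\item On $|\zeta|\ge R$: $\eta(|\zeta|^2)=\psi(\zeta)-a\log|\zeta|^2$. By \eqref{eq:BS_infty} and its derivative version (already used in Lemma \ref{lm:shell}), $D^2\psi=O(|\zeta|^{2-2n})$. Also $D^2\log|\zeta|^2=O(|\zeta|^{-2})$. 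Both are bounded for large $|\zeta|$.
\item For $n=2$ we have $\eta\equiv0$, so there is nothing to prove.
\end{itemize}
This yields the required constant and completes the argument.
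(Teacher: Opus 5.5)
Your proposal is correct and follows essentially the same route as the paper. You use the decomposition \eqref{eq:BS_FS}, discard the harmless $a\delta^2\pi^*\omega_{FS}$ term, and bound $i\partial\bar\partial\,\eta(|\zeta|^2)$ on a compact region by smoothness and for large $|\zeta|$ via $\eta(|\zeta|^2)=\psi(\zeta)-a\log|\zeta|^2$ and the decay of $D^2\psi$. Your explicit reference metric $\widehat\omega$ and the dilation-invariance remark simply make precise what the paper's split into $|z|\le C\delta$ and $C\delta\le|z|\le\rho$ does implicitly.
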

\begin{proof}
Since
\[
\omega_{BS}^\delta=\delta^2a\cdot \pi^*\omega_{FS}+\varpi^*\left(i\partial\bar{\partial}\left(\frac{|z|^2}{2}+\delta^2\eta(|z/\delta|^2)\right)\right)
\]
in view of \eqref{eq:BS_FS}, it suffices to verify that $\varpi^\ast i\partial\bar{\partial}(\delta^2\eta(|z/\delta|^2))$ is uniformly bounded on $\varpi^{-1}(B_\rho)$ as $\delta\rightarrow0$.  If $n=2$,  then $\eta\equiv0$ and there is nothing to proof.  Suppose  $n\geq 3$ and fix $C\gg1$. Since $\eta$ is a smooth function on $[0,+\infty)$, it follows that
\[
i\partial\bar{\partial}(\delta^2\eta(|z/\delta|^2))
=\left.\left(i\partial_\zeta\bar{\partial}_\zeta\eta(|\zeta|^2)\right)\right|_{\zeta=z/\delta}
\]
is uniformly bounded for $|z|\leq C\delta$ as $\delta\rightarrow0$.  Moreover,  since $\eta(|\zeta|^2)=\psi(\zeta)-a\log|\zeta|^2$,  we may choose $C\gg 1$ such that
\[
i\partial_\zeta\bar{\partial}_\zeta\psi(\zeta)=O(|\zeta|^{2-2n})
\]
when $|\zeta|=|z/\delta|\geq C$, in view of \eqref{eq:BS_infty}.  Thus
\[
i\partial\bar{\partial}(\delta^2\eta(|z/\delta|^2))=\left.\left(i\partial_\zeta\bar{\partial}_\zeta\psi(\zeta)\right)\right|_{\zeta=z/\delta}-a\delta^2\cdot i\partial\bar{\partial}\log|z|^2=\delta^{2n-2}O(|z|^{2-2n})+\delta^2O(|z|^{-2}),
\]
which is uniformly bounded for $C\delta\leq |z|\leq \rho$ as $\delta\rightarrow0$.  This completes the proof.
\end{proof}
Now we give the crucial estimates.

\begin{lemma}\label{lm:gluing_estimates}
There exists a constant $C_0>0$, independent of $\delta$, such that the following hold.
\begin{eqnarray}
& & S_C(g_\delta)=0\ \ \ \text{on}\ \varpi^{-1}(B_{r_\delta}),\label{eq:sc_glue_1}\\
& & |S_C(g_\delta)|\leq C_0\ \ \ \text{on}\ A_\delta=\varpi^{-1}(B_{2r_\delta}\setminus B_{r_\delta}),\label{eq:sc_glue_2}\\
& & S_C(g_\delta)=S_C(g)\ \ \ \text{on}\ \widehat{M}\setminus \varpi^{-1}(B_{2r_\delta}),\label{eq:sc_glue_3}\\
& & \mathrm{vol}_{g_\delta}(\widehat{M})\leq \mathrm{vol}_g(M)+O(r_\delta^2),\qquad \delta\to0,\label{eq:volume_glue}\\
& & \int_{\widehat{M}}|S_C^-(g_\delta)|^n dV_{g_\delta}\leq \int_M|S_C^-(g)|^n dV_g+O(r_\delta^{2n}),\qquad \delta\to0,\label{eq:I_C_minus_glue}\\
& & \int_{\widehat{M}}|S_C(g_\delta)|^n dV_{g_\delta}\leq \int_M|S_C(g)|^n dV_g+O(r_\delta^{2n}),\qquad \delta\to0.\label{eq:I_C_glue}
\end{eqnarray}
\end{lemma}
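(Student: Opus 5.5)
The plan is to treat the three regions $\varpi^{-1}(B_{r_\delta})$, $A_\delta$ and $\widehat M\setminus\varpi^{-1}(B_{2r_\delta})$ separately, and then to assemble the integral estimates.

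For \eqref{eq:sc_glue_1}: on $\varpi^{-1}(B_{r_\delta})$ we have $\omega_\delta=\omega^\delta_{BS}$. This is the pull-back of the Burns--Simanca metric under the dilation $\zeta=z/\delta$, multiplied by $\delta^2$. Indeed,
\[
\frac{|z|^2}{2}+\delta^2\psi(z/\delta)=\delta^2\Bigl(\frac{|\zeta|^2}{2}+\psi(\zeta)\Bigr).
\]
Since $g_{BS}$ is K\"ahler, its Chern scalar curvature equals half its Riemannian scalar curvature, which is zero. Scaling gives $S_C(\delta^2 g)=\delta^{-2}S_C(g)=0$.

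For \eqref{eq:sc_glue_3}: outside $\varpi^{-1}(B_{2r_\delta})$ we have $\chi\equiv1$, so $\omega_\delta=\varpi^*\omega$ there. Since $\varpi$ is a biholomorphism off the exceptional divisor, the curvatures agree.

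For \eqref{eq:sc_glue_2}: I will use Lemma \ref{lm:shell}. On $A$ the rescaled metric satisfies $\|\widetilde g_\delta-g_{\mathrm{eucl}}\|_{C^2(A)}=O(r_\delta^2)$. The Chern Ricci form $-i\partial\bar\partial\log\det$ depends on up to two derivatives of the metric, so $|S_C(\widetilde g_\delta)|=O(r_\delta^2)$ on $A$. The annulus $A_\delta$ corresponds to $\{1\le|w|\le2\}\subset A$. Undoing the scaling gives
\[
S_C(g_\delta)=r_\delta^{-2}S_C(\widetilde g_\delta)=O(1),
\]
which is a uniform bound $C_0$.

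\textbf{Volume estimate \eqref{eq:volume_glue}.} I split
\[
\mathrm{vol}_{g_\delta}(\widehat M)=\mathrm{vol}_g\bigl(M\setminus B_{2r_\delta}\bigr)+\mathrm{vol}_{g_\delta}\bigl(\varpi^{-1}(B_{2r_\delta})\bigr)\le \mathrm{vol}_g(M)+\mathrm{vol}_{g_\delta}\bigl(\varpi^{-1}(B_{2r_\delta})\bigr).
\]
On $\varpi^{-1}(B_{r_\delta})$, Lemma \ref{lm:BS_bounded} makes $\omega^\delta_{BS}$ uniformly bounded, so $(\omega^\delta_{BS})^n\le C\,\varpi^*dV_0$ on the complement of the divisor. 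Its volume is therefore at most $C r_\delta^{2n}$. On $A_\delta$, Lemma \ref{lm:shell} shows that $g_\delta$ is uniformly equivalent to the Euclidean metric, which again gives $O(r_\delta^{2n})$. This is stronger than the claimed $O(r_\delta^2)$.

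\textbf{Curvature integrals \eqref{eq:I_C_minus_glue} and \eqref{eq:I_C_glue}.} I use the same splitting. The contribution from $\varpi^{-1}(B_{r_\delta})$ vanishes by \eqref{eq:sc_glue_1}. The contribution from $A_\delta$ is at most $C_0^n\,\mathrm{vol}_{g_\delta}(A_\delta)=O(r_\delta^{2n})$. The contribution from the outer region is bounded by the full integral over $M$, because the integrand is nonnegative.

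\textbf{Main obstacle.} The main point is the curvature bound on the annulus, which requires scaling to be compatible with Lemma \ref{lm:shell}. This depends on the $C^2$ closeness there, which is exactly what that lemma supplies. The volume bound inside the Burns--Simanca region should be checked separately via Lemma \ref{lm:BS_bounded}, being careful near the exceptional divisor. That divisor has measure zero, so a bounded form on its complement suffices.
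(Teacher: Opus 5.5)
Your three-region decomposition, the rescaling argument via Lemma~\ref{lm:shell} for \eqref{eq:sc_glue_2} and for $\mathrm{vol}_{g_\delta}(A_\delta)=O(r_\delta^{2n})$, and your treatment of the curvature integrals all coincide with the paper's proof; for the integrals, the inner region contributes nothing because $S_C(g_\delta)\equiv 0$ there. The step that fails is the volume bound on $\varpi^{-1}(B_{r_\delta})$. You infer from Lemma~\ref{lm:BS_bounded} that $(\omega^\delta_{BS})^n\le C\,\varpi^*dV_0$ off the divisor, and this is false. That lemma gives boundedness relative to a smooth metric on the blown-up ball, not relative to the degenerate form $\varpi^*\omega_{\mathrm{eucl}}$. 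Its proof writes $\omega^\delta_{BS}=a\delta^2\pi^*\omega_{FS}+\varpi^*\beta_\delta$, where only $\beta_\delta$ has uniformly bounded $z$-coefficients. The term $a\delta^2\pi^*\omega_{FS}=a\delta^2\,i\partial\bar\partial\log|z|^2$ has $z$-coefficients of size $\delta^2|z|^{-2}$. Concretely, in the chart $z_1=u_1$, $z_j=u_1u_j$ ($j\ge 2$) one has $\varpi^*dV_0=|u_1|^{2(n-1)}dV_u$, which vanishes to order $2(n-1)$ along the exceptional divisor. By contrast, $(\omega^\delta_{BS})^n$ is a nondegenerate smooth volume form there. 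So no constant $C$ works, not even for one fixed $\delta$. Your closing remark that the divisor has measure zero does not help, because the form is unbounded on the complement when measured in $z$-coordinates.

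The repair is local. Measure $\omega^\delta_{BS}$ against the fixed smooth metric $\pi^*\omega_{FS}+\varpi^*\omega_{\mathrm{eucl}}$, which is what Lemma~\ref{lm:BS_bounded} actually provides. This bounds $\mathrm{vol}_{g_\delta}(\varpi^{-1}(B_{r_\delta}))$ by a constant times the volume of a tubular neighbourhood of the divisor, i.e.\ by $O(r_\delta^2)$. This is the paper's argument, and it is all that \eqref{eq:volume_glue} requires. Your sharper $O(r_\delta^{2n})$ is actually true, but it needs a different justification. One option is to expand $(a\delta^2\pi^*\omega_{FS}+\varpi^*\beta_\delta)^n$. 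Use $(\pi^*\omega_{FS})^n=0$ and $|(\pi^*\omega_{FS})^k\wedge\beta_\delta^{n-k}|\lesssim|z|^{-2k}dV_0$ for $k\le n-1$, which is integrable near $0$. Summing gives $\sum_{k<n}\delta^{2k}r_\delta^{2n-2k}=O(r_\delta^{2n})$, since $\delta\le r_\delta$. The other option is to rescale: $\mathrm{vol}_{\omega^\delta_{BS}}(\varpi^{-1}(B_{r_\delta}))=\delta^{2n}\,\mathrm{vol}_{\omega_{BS}}(\varpi^{-1}(B_{r_\delta/\delta}))=O(r_\delta^{2n})$, by the asymptotically Euclidean behaviour of $\omega_{BS}$. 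With either repair the rest of your proposal goes through unchanged.
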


\begin{proof}
\eqref{eq:sc_glue_1} and \eqref{eq:sc_glue_3} are direct consequences of  the definition of $g_\delta$.  For \eqref{eq:sc_glue_2},  we infer from  Lemma \ref{lm:shell} that
\[
\sup_A|S_C(\widetilde{g}_\delta)|=\sup_A|S_C(\widetilde{g}_\delta)-S_C(g_{\mathrm{eucl}})|=O(r_\delta^2),\qquad \delta\to0, 
\]
which implies
\[
\sup_{A_\delta}|S_C(g_\delta)|=\frac{1}{r_\delta^2}\sup_{1\leq|w|\leq2}|S_C(\widetilde{g}_\delta)|=\frac{1}{r_\delta^2}O(r_\delta^2)=O(1),\qquad \delta\to0.
\]

Since $g_\delta$  differs from $g$ only inside $\varpi^{-1}(B_{2r_\delta})$,  it follows that
\begin{equation}\label{eq:volume_glue_1}
\mathrm{vol}_{g_\delta}(\widehat{M}\setminus \varpi^{-1}(B_{2r_\delta}))=\mathrm{vol}_g(M\setminus \varpi^{-1}(B_{2r_\delta}))\leq \mathrm{vol}_g(M).
\end{equation}
Recall that $\widetilde{g}_\delta=r_\delta^{-2}\iota^*g_\delta$ and $\widetilde{g}_\delta\rightarrow g_{\mathrm{eucl}}$ in the $C^2$ topology on $A$.  Thus
\[
\mathrm{vol}_{g_\delta}(\varpi^{-1}(B_{2r_\delta}\setminus B_{r_\delta}))=r_\delta^{2n}\mathrm{vol}_{\widetilde{g}_\delta}(\{1\leq|w|<2\})
\]
and
\[
\lim_{\delta\rightarrow0}\mathrm{vol}_{\widetilde{g}_\delta}(\{1\leq|w|<2\})=\mathrm{vol}_{g_{\mathrm{eucl}}}(\{1\leq|w|<2\}),
\]
so that
\begin{equation}\label{eq:volume_glue_2}
\mathrm{vol}_{g_\delta}(\varpi^{-1}(B_{2r_\delta}\setminus B_{r_\delta}))=O(r_\delta^{2n}),\qquad \delta\to0.
\end{equation}
Moreover, as $\varpi^{-1}(B_{r_\delta})$ is a tubular neighbourhood of the exceptional locus in $\widehat{M}$, we have
\begin{equation}\label{eq:volume_glue_3}
\mathrm{vol}_{g_\delta}(\varpi^{-1}(B_{r_\delta}))=O(r_\delta^2),\qquad \delta\to0,
\end{equation}
in view of Lemma \ref{lm:BS_bounded}. 
Thus  \eqref{eq:volume_glue} follows immediately form \eqref{eq:volume_glue_1}, \eqref{eq:volume_glue_2} and  \eqref{eq:volume_glue_3}.  
Similarly,    \eqref{eq:sc_glue_1}--\eqref{eq:sc_glue_3} yield \eqref{eq:I_C_minus_glue} and
\eqref{eq:I_C_glue}.
\end{proof}

\medskip
\noindent\textbf{Step 3: Equalities for $\mathcal{I}_C$ and $\mathcal{I}_C^-$.}

Suppose that $K_M$ is nef.  Recall that $g$ satisfies \eqref{eq:approximating_metric_I_minus}.  By \eqref{eq:I_C_minus_glue}, we get
\[
\int_{\widehat{M}}|S_C^-(g_\delta)|^n dV_{g_\delta}
\leq \int_M|S_C^-(g)|^n dV_g+O(r_\delta^2)
\leq \frac{(n\pi)^n}{n!}\mathrm{CanVol}(M)+\varepsilon+O(r_\delta^2),
\]
so that
\[
\mathcal{I}^-_C(\widehat{M})\leq \frac{(n\pi)^n}{n!}\mathrm{CanVol}(M)+\varepsilon+O(r_\delta^2).
\]
Letting $\delta\rightarrow0$ and $\varepsilon\rightarrow0$,  we obtain
\[
\mathcal{I}^-_C(\widehat{M})\leq \frac{(n\pi)^n}{n!}\mathrm{CanVol}(M).
\]
This combined with Theorem \ref{th:Main0} gives
\[
\mathcal{I}^-_C(\widehat{M})= \frac{(n\pi)^n}{n!}\mathrm{CanVol}(M).
\]
Using \eqref{eq:approximating_metric_I} and \eqref{eq:I_C_glue} instead of \eqref{eq:approximating_metric_I_minus} and \eqref{eq:I_C_minus_glue},  we obtain the equality for  $\mathcal{I}_C(M)$.

\medskip
\noindent\textbf{Step 4: Equality for $\mathrm{MinVol}_C$.}

This part is the most involved.  Suppose  $K_M$ is nef.  It is known from  \eqref{eq:approximating_metric_vol} that for any $\varepsilon>0$ there exists a K\"{a}hler metric $g$ with
$S_C(g)\ge -1$ and
\[
\mathrm{vol}_g(M)\le \frac{(n\pi)^n}{n!}\,\mathrm{CanVol}(M)+\varepsilon.
\]
Fix $\tau\in(0,1)$. Replacing $g$ by $\lambda g$ with $\lambda=(1-\tau)^{-1}>1$, we get
\[
S_C(g)\ge -1+\tau
\]
and
\[
\mathrm{vol}_g(M)\le (1-\tau)^{-n}\left(\frac{(n\pi)^n}{n!}\,\mathrm{CanVol}(M)+\varepsilon\right).
\]
If $\tau$ is sufficiently small,  then
\[
\mathrm{vol}_g(M)\le \frac{(n\pi)^n}{n!}\,\mathrm{CanVol}(M)+2\varepsilon.
\]
This combined with  \eqref{eq:volume_glue} implies
\begin{equation}\label{eq:volume_canvol_epsilon}
\mathrm{vol}_{g_\delta}(\widehat{M})\leq \frac{(n\pi)^n}{n!}\,\mathrm{CanVol}(M)+3\varepsilon
\end{equation}
when $\delta$ is sufficiently small. However, this is not enough to verify the equality for $\mathrm{MinVol}_C$, for we merely have
\begin{equation}\label{eq:sc_glue}
S_C(g_\delta)\geq
\begin{cases}
0\ \ \ \ \ &\text{on}\ \varpi^{-1}(B_{r_\delta}),\\
-C_0\ \ \ \ \ &\text{on}\ A_\delta=\varpi^{-1}(B_{2r_\delta}\setminus B_{r_\delta}),\\
-1+\tau\ \ \ \ \ &\text{on}\ \widehat{M}\setminus \varpi^{-1}(B_{2r_\delta}),
\end{cases}
\end{equation}
in view of \eqref{eq:sc_glue_1}, \eqref{eq:sc_glue_2} and \eqref{eq:sc_glue_3}. It is unclear whether $C_0\leq1$.

To overcome this difficulty, we will use again the technique of conformal deformation. Set $h_\delta=e^{u_\delta/n}g_\delta$ for a real-valued smooth function $u_\delta$ on $\widehat M$. Recall that
\begin{equation}\label{eq:sc_conformal}
S_C(h_\delta)=e^{-u_\delta/n}\left(S_C(g_\delta)-\Delta_{g_\delta} u_\delta\right),
\end{equation}
where $\Delta_{g_\delta}$ denotes the complex Laplacian with respect to the metric $g_\delta$. We are going to prove the following

\begin{proposition}\label{prop:conformal_raise}
For $0<\delta\ll\rho\ll1$, there exists
$u_\delta\in C^\infty(\widehat M)$ with $u_\delta\ge 0$, such that the Hermitian metric $h_\delta=e^{u_\delta/n}g_\delta$ satisfies
\begin{eqnarray*}
& & S_C(h_\delta)\ge -1,\\
& & \mathrm{vol}_{h_\delta}(\widehat M)=\mathrm{vol}_{g_\delta}(\widehat M)+O(r_\delta^2),\qquad \delta\rightarrow0.
\end{eqnarray*}
\end{proposition}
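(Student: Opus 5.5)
We want $u_\delta\ge 0$ with $S_C(g_\delta)-\Delta_{g_\delta}u_\delta\ge -e^{u_\delta/n}$ everywhere, and $\int_{\widehat M}(e^{u_\delta}-1)\,dV_{g_\delta}=O(r_\delta^2)$. Since $u_\delta\ge 0$ gives $e^{u_\delta/n}\ge 1$, it suffices to arrange
\[
-\Delta_{g_\delta}u_\delta\ \ge\ -1-S_C(g_\delta).
\]
By \eqref{eq:sc_glue}, the right-hand side is $\le -\tau$ off $\varpi^{-1}(B_{2r_\delta})$, $\le -1$ on $\varpi^{-1}(B_{r_\delta})$, and $\le C_0-1$ on the annulus $A_\delta$. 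So we need a function that is \emph{superharmonic enough} on $A_\delta$ and allowed to be slightly subharmonic (Laplacian at most $\tau$, resp. $1$) elsewhere.

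\textbf{Construction.} Write $f_\delta:=\max\{-1-S_C(g_\delta),-\tau\}$, smoothed. It is supported (in its positive part) only in $A_\delta$, has size $\le C_0$ there, and $\int f_\delta^+\,dV_{g_\delta}\le C_0\,\mathrm{vol}(A_\delta)=O(r_\delta^{2n})$. Set $F_\delta:=C_0\chi_{A'_\delta}-\kappa_\delta$, where $A'_\delta$ is a slight enlargement of $A_\delta$ and the constant $\kappa_\delta=C_0\mathrm{vol}(A'_\delta)/\mathrm{vol}_{g_\delta}(\widehat M)=O(r_\delta^{2n})$ makes $\int F_\delta=0$. Solve the Poisson equation $-\Delta_{g_\delta}v_\delta=F_\delta$ as in \eqref{eq:Poisson} (cf. \cite[Theorem 2.12]{SzekelyhidiBook}), and put $u_\delta:=v_\delta-\min v_\delta\ge 0$. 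Then $-\Delta u_\delta=F_\delta\ge f_\delta$ wherever $\kappa_\delta\le\tau$, which holds for $\delta$ small. This yields $S_C(h_\delta)\ge -1$.

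\textbf{Main obstacle: the volume estimate.} We must show $\mathrm{osc}\,v_\delta$ is small, namely $\int(e^{u_\delta}-1)\,dV=O(r_\delta^2)$, and it suffices that $\sup u_\delta=O(r_\delta^2)$. Here $v_\delta=\int G_\delta(\cdot,y)F_\delta(y)\,dV(y)$ with $G_\delta$ the Green function of $g_\delta$, normalized to be bounded below. Away from the neck, $g_\delta$ is the fixed metric $g$, and the Green function behaves like $d^{2-2n}$. The source is concentrated on a shell of radius $\sim r_\delta$ with total mass $O(r_\delta^{2n})$. By scaling (Lemma \ref{lm:shell}: in the $w$-coordinates the metric is $C^2$-close to Euclidean on the shell), the potential of this shell is bounded by $\sim r_\delta^{2n}\cdot r_\delta^{2-2n}=r_\delta^2$ near the shell and is smaller far away. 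The delicate point is uniformity of the Green function bound in $\delta$ inside $\varpi^{-1}(B_{r_\delta})$, where the rescaled Burns-Simanca geometry sits.

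I would handle this by an explicit barrier rather than abstract Green function bounds. On $B_\rho$, take $u_\delta$ to be a radial function of $|z|$, built as follows. It is the constant $c\,r_\delta^2$ on $B_{r_\delta}$; in the shell it is the Euclidean profile solving $-\Delta u=C_0$ (valid since $\widetilde g_\delta\approx g_{\mathrm{eucl}}$); outside it decays like $r_\delta^{2n}|z|^{2-2n}$ (superharmonic corrections are needed since $g$ is only approximately Euclidean near $p$, with errors controlled because $\varphi=O(|z|^4)$). It is then cut off to $0$ at $|z|\sim\rho$. The cut-off region produces a Laplacian of size $O(r_\delta^{2n}\rho^{-2n})\ll\tau$, which is allowed. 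Inside $B_{r_\delta}$ the function is constant, so its Laplacian vanishes, and Lemma \ref{lm:BS_bounded} is not even needed there. This gives $\sup u_\delta=O(r_\delta^2)$, hence $\mathrm{vol}_{h_\delta}(\widehat M)=\int e^{u_\delta}dV_{g_\delta}\le(1+O(r_\delta^2))\mathrm{vol}_{g_\delta}(\widehat M)$, as required.
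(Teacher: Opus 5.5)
Your final construction, once you drop the Green's-function route, is essentially the paper's: the paper takes $u_\delta=r_\delta^2\kappa(z)v(z/r_\delta)$, where $v=N*f$ is the Newtonian potential of a smooth radial bump $f\equiv -C_0$ on $\{1\le|w|\le 2\}$. That is exactly your radial barrier: constant near the exceptional divisor, Euclidean profile on the shell, decay like $r_\delta^{2n}|z|^{2-2n}$, cut off at scale $\rho$. The volume estimate then follows from $0\le u_\delta\le Cr_\delta^2$, just as you say.

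Two minor corrections. First, for $u_\delta$ to be smooth, the source needs transition layers in $\{r_\delta/2\le|z|\le r_\delta\}$ and $\{2r_\delta\le|z|\le 3r_\delta\}$, where a nonpositive source is harmless; so $u_\delta$ is constant only on $B_{r_\delta/2}$, not on all of $B_{r_\delta}$. Second, no superharmonic corrections are needed outside the shell, because the $O(\rho^2)$ deviation of $g$ from the Euclidean metric is absorbed by the slack $\tau$.
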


We first show the following

\begin{lemma}\label{lm:Laplacian_scaling}
Let $\varphi$ be a smooth function on $\{0<|w|\leq \rho/r_\delta\}$. Set $\varphi_{r_\delta}(z)=\varphi(z/r_\delta)$ for $0<|z|\leq\rho$, which can be identified with a smooth function on $\varpi^{-1}(B_\rho\setminus\{0\})\subset\widehat{M}$. Then
\[
\Delta_{g_\delta}\varphi_{r_\delta}(z)=r_\delta^{-2}\Delta_{\widetilde{g}_\delta}\varphi(w)|_{w=z/r_\delta}.
\]
\end{lemma}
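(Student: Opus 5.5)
This is a pure scaling computation in the coordinates $w=z/r_\delta$. The plan is to work on $\varpi^{-1}(B_\rho\setminus\{0\})$, which $\varpi$ identifies biholomorphically with $B_\rho\setminus\{0\}$. There $g_\delta$ is given in the coordinate $z$ by the Hermitian matrix $(g_{\delta,j\bar k}(z))$, and the complex Laplacian is
\[
\Delta_{g_\delta}f=\mathrm{tr}_{g_\delta}(i\partial\bar\partial f)=\sum g_\delta^{j\bar k}(z)\frac{\partial^2 f}{\partial z_j\partial\bar z_k}.
\]
Since $\iota$ is the composition of $w\mapsto z=r_\delta w$ with $\varpi^{-1}$, the tensor $\widetilde g_\delta=r_\delta^{-2}\iota^*g_\delta$ has, in the coordinate $w$, the components
\[
\widetilde g_{\delta,j\bar k}(w)=r_\delta^{-2}\,r_\delta^{2}\,g_{\delta,j\bar k}(r_\delta w)=g_{\delta,j\bar k}(r_\delta w).
\]
The two factors come from $dz_j=r_\delta\,dw_j$ and $d\bar z_k=r_\delta\,d\bar w_k$. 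Consequently the inverse matrices agree as well: $\widetilde g_\delta^{j\bar k}(w)=g_\delta^{j\bar k}(r_\delta w)$.

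Next I would apply the chain rule to $\varphi_{r_\delta}(z)=\varphi(z/r_\delta)$. This gives
\[
\frac{\partial^2\varphi_{r_\delta}}{\partial z_j\partial\bar z_k}(z)=r_\delta^{-2}\,\frac{\partial^2\varphi}{\partial w_j\partial\bar w_k}(w)\Big|_{w=z/r_\delta}.
\]
The Laplacian formula is coordinate-invariant because it is the trace of $i\partial\bar\partial f$ against $\omega_\delta$, so computing it in the $z$-chart is legitimate. Contracting the last identity with $g_\delta^{j\bar k}(z)=\widetilde g_\delta^{j\bar k}(z/r_\delta)$ then yields
\[
\Delta_{g_\delta}\varphi_{r_\delta}(z)=r_\delta^{-2}\sum\widetilde g_\delta^{j\bar k}(w)\,\partial_{w_j}\partial_{\bar w_k}\varphi(w)\Big|_{w=z/r_\delta}=r_\delta^{-2}\Delta_{\widetilde g_\delta}\varphi(w)\Big|_{w=z/r_\delta},
\]
which is the identity asserted in Lemma \ref{lm:Laplacian_scaling}.

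There is no real obstacle here. The only point needing care is bookkeeping of the scaling factors. The $r_\delta^{-2}$ in the definition of $\widetilde g_\delta$ cancels the $r_\delta^{2}$ produced by pulling back under $z=r_\delta w$, so the metric coefficients match pointwise. The leftover $r_\delta^{-2}$ therefore comes entirely from the two derivatives of $\varphi(z/r_\delta)$. One should also note that everything takes place away from the exceptional divisor, where $\varpi$ is a biholomorphism, so the identification of functions on $\varpi^{-1}(B_\rho\setminus\{0\})$ with functions of $z$ is valid.
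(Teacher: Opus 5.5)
Your proposal is correct and follows essentially the same route as the paper. Both write the coefficients of $\widetilde g_\delta$ in the $w$-coordinates as $(g_\delta)_{j\bar k}(r_\delta w)$, so the inverse matrices match, and both get the factor $r_\delta^{-2}$ from the chain rule applied to $\partial_{z_j}\partial_{\bar z_k}\varphi(z/r_\delta)$.
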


\begin{proof}
If one writes
\[
g_\delta=\sum(g_\delta)_{j\bar{k}}(z)dz_j\otimes d\bar{z}_k,
\]
then
\[
\widetilde{g}_\delta=r_\delta^{-2}\iota^*g_\delta=\sum(g_\delta)_{j\bar{k}}(r_\delta w)dw_j\otimes d\bar{w}_k.
\]
Let  $((g_\delta)^{j\bar{k}})$ denote the inverse matrix of $((g_\delta)_{j\bar{k}})$.   Then we have
\begin{eqnarray*}
\Delta_{g_\delta}\varphi_{r_\delta}(z)
&=& \sum (g_\delta)^{j\bar{k}}(z)\frac{\partial^2\varphi_{r_\delta}}{\partial z_j\bar{z}_k}(z)\\
&=& r_\delta^{-2}\sum (g_\delta)^{j\bar{k}}(z)\left.\frac{\partial^2\varphi}{\partial w_j\bar{w}_k}(w)\right|_{w=z/r_\delta}\\
&=& r_\delta^{-2}\left.\left(\sum (g_\delta)^{j\bar{k}}(r_\delta w)\frac{\partial^2\varphi}{\partial w_j\bar{w}_k}(w)\right)\right|_{w=z/r_\delta}\\
&=& r_\delta^{-2}\Delta_{\widetilde{g}_\delta}\varphi(w)|_{w=z/r_\delta}.
\end{eqnarray*}
\end{proof}

Pick a smooth radial function $f\in C_0^\infty(\mathbb C^n)$ such that
\begin{eqnarray*}
& & -C_0\le f\le 0 \quad \text{on } \left\{\frac{1}{2}\le |w|\le 3\right\},\\
& & f\equiv -C_0 \quad \text{on } \{1\le |w|\le 2\},\\
& & \mathrm{supp}\,f\subset \left\{\frac{1}{2}\le |w|\le 3\right\}.
\end{eqnarray*}
Let $N(w)=-c_n|w|^{2-2n}$ be the Newtonian kernel in real dimension $2n\ge 4$,  and define
\[
v:=N*f.
\]
Then $v$ is smooth and radial on $\mathbb C^n$, satisfying $v\geq0$ and
\[
\Delta_{\mathrm{eucl}}v:=2\sum^n_{j=1}\frac{\partial^2v}{\partial w_j\partial \bar{w}_j}=f.
\]
We also need the following elementary fact.

\begin{lemma}\label{lm:v_bdd}
There exists a constant $C$ depending on $n$ and $C_0$ such that
\begin{eqnarray*}
& & |v|\leq C,\qquad \left|\frac{\partial v}{\partial w_j}\right|\leq C,\ \ \ 1\leq j\leq n,\\
& & \left|\frac{\partial^2 v}{\partial w_j\partial\bar{w}_k}\right|\leq C,\ \ \ 1\leq j,k\leq n.
\end{eqnarray*}
\end{lemma}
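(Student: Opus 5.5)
Since $f$ is a bounded, compactly supported function and $N$ is locally integrable, the plan is to estimate $v=N*f$ and its derivatives directly from the convolution formula. I will split into derivatives of order at most one and of order two.

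\textbf{Zeroth and first derivatives.} For $|v|$ I use $|v(w)|\le \|f\|_{L^\infty}\int_{\{1/2\le|y|\le 3\}}|N(w-y)|\,dV(y)\le C_0\int_{|y|\le 3}c_n|y|^{2-2n}\,dV(y)$. The last step holds because the integral of the radially decreasing function $|N|$ over a set of fixed measure is maximized when the set is centered at the origin, so the bound is uniform in $w$. For the first derivatives I differentiate under the integral, giving $\partial_{w_j}v=(\partial_j N)*f$. Since $|\partial_j N(y)|\lesssim |y|^{1-2n}$ is still locally integrable in real dimension $2n$, the same rearrangement argument bounds this by a constant times $C_0$.

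\textbf{Second derivatives.} These cannot be handled the same way, because $\partial^2 N\sim |y|^{-2n}$ is not integrable. Instead I use the fact that $f$ is smooth with compact support and move the derivatives onto it: $\partial_{w_j}\partial_{\bar w_k}v=N*(\partial_{w_j}\partial_{\bar w_k}f)$. Since $f$ is a fixed radial profile, $\|D^2 f\|_\infty$ is a constant depending only on $C_0$ and the chosen cutoff, and the bound follows exactly as for $|v|$. The constant then depends on $n$, $C_0$, and the fixed choice of profile, which is how I read the statement. Alternatively, outside $\{|w|\le 4\}$ one can differentiate $N$ directly, since the kernel is smooth away from the support of $f$. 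This also shows $|v|=O(|w|^{2-2n})$ and $|\partial v|=O(|w|^{1-2n})$ at infinity, which will be needed later when rescaling.

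\textbf{Nonnegativity and smoothness.} These come for free. We have $N\le0$ and $f\le0$, so $v=N*f\ge0$. And $v$ is smooth because $f$ is. The step I expect to need the most care is the second-derivative bound. The route I would take is to transfer derivatives onto $f$, rather than invoke Calderón–Zygmund or Schauder theory. The only thing to check is that the normalization $\Delta_{\mathrm{eucl}}=2\sum\partial_j\bar\partial_j$ matches the constant $c_n$, but that affects only constants, not boundedness.
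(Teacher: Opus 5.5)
Your proposal is correct and follows essentially the same route as the paper: bound $N*f$, $N*\partial f$ and $N*\partial\bar\partial f$ by the sup norm of $f$ (resp. of its derivatives) times the $L^1$ norm of $N$ over a bounded translate of $\mathrm{supp}\,f$, which is uniformly bounded in $w$. The only difference is that for the first derivatives you put the derivative on the kernel, using local integrability of $|y|^{1-2n}$, whereas the paper moves it onto $f$ as it does for the second derivatives; both work, and your centered-ball argument for uniformity in $w$ is a detail the paper leaves implicit.
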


\begin{proof}
We have
\[
|v(w)|=|N*f|\leq \|N\|_{L^1(w+\mathrm{supp}\,f)}\|f\|_{L^\infty(\mathbb{C}^n)}\le C,\ \ \forall\,w\in \mathbb C^n.
\]
Similarly,
\begin{eqnarray*}
& & \left|\frac{\partial v}{\partial w_j}(w)\right|
=\left|N*\frac{\partial f}{\partial w_j}\right|
\leq \|N\|_{L^1(w+\mathrm{supp}\,f)}\left\|\frac{\partial f}{\partial w_j}\right\|_{L^\infty(\mathbb{C}^n)}\le C,\\
& & \left|\frac{\partial^2 v}{\partial w_j\partial\bar{w}_k}(w)\right|
=\left|N*\frac{\partial^2 f}{\partial w_j\partial\bar{w}_k}\right|
\leq \|N\|_{L^1(w+\mathrm{supp}\,f)}\left\|\frac{\partial^2 f}{\partial w_j\partial\bar{w}_k}\right\|_{L^\infty(\mathbb{C}^n)}\le C.
\end{eqnarray*}
\end{proof}

\begin{proof}[Proof of Proposition \ref{prop:conformal_raise}]
The function $v$ constructed above is harmonic on $\{|w|<1/2\}\cup\{|w|>3\}$. Since $v$ is radial, it  has to be a  constant on $\{|w|<1/2\}$.  Take a cut-off function $\kappa\in C_0^\infty(B_\rho)$ such that $\kappa\equiv 1$ on $B_{3r_\delta}$ and 
\begin{equation}\label{eq:kappa_bdd}
\left|\frac{\partial \kappa}{\partial z_j}\right|\lesssim 1/\rho,\qquad \left|\frac{\partial^2 \kappa}{\partial z_j\partial\bar{z}_k}\right|\lesssim 1/\rho^2,\qquad 1\leq j,k\leq n,
\end{equation}
where the implicit constants are independent of $\rho$ and $\delta$ (note that $3r_\delta\ll\rho$).  Now define $u_\delta\in C^\infty_0(\widehat{M})$ as
\[
u_\delta:=
\begin{cases}
r_\delta^2\,\kappa(z)\,v(z/r_\delta),\ \ \ &\text{on }\varpi^{-1}(B_\rho\setminus\{0\}),\\
0,\ \ \ &\text{on }\widehat{M}\setminus\varpi^{-1}(B_\rho\setminus\{0\}).
\end{cases}
\]
Recall that $z$ is a local coordinate on $B_\rho$, hence we may identify a point in $\varpi^{-1}(B_\rho\setminus\{0\})$ with its coordinate in $B_\rho\setminus \{0\}$. The function $u_\delta$ can be extended smoothly across the exceptional divisor since $v(z/r_\delta)$ and $\kappa$ are constants on $\varpi^{-1}(B_{r_\delta/2}\setminus\{0\})$.

In what follows, we will estimate the Laplacian of $u_\delta$ with respect to $g_\delta$.

\begin{itemize}
\item[$(1)$]
On $\widehat{M}\setminus\varpi^{-1}(B_\rho)$, we have $u_\delta\equiv0$, so that $\Delta_{g_\delta}u_\delta=0$.

\item[$(2)$]
On $\varpi^{-1}(B_{r_\delta/2})$,  both $v(z/r_\delta)$ and $\kappa(z)$ are  constants,  so that $\Delta_{g_\delta}u_\delta=0$.

\item[$(3)$]
On $\varpi^{-1}(B_{3r_\delta}\setminus B_{r_\delta/2})$, we have $u_\delta(z)=r_\delta^2v(z/r_\delta)$. It follows from Lemma \ref{lm:Laplacian_scaling} that
\[
\Delta_{g_\delta}u_\delta(z)=\Delta_{\widetilde{g}_\delta}v(w)|_{w=z/r_\delta}.
\]
Lemma \ref{lm:shell} implies 
\[
\sup_{A}\left|\Delta_{\widetilde{g}_\delta}v-\Delta_{\mathrm{eucl}}v\right|\rightarrow0
\]
where $A=\{1/2\leq|w|\leq3\}$,  so that
\[
\Delta_{g_\delta}u_\delta(z)\rightarrow \Delta_{\mathrm{eucl}}v(w)|_{w=z/r_\delta}=f(z/r_\delta)
\]
uniformly for $r_\delta/2\leq |z|\leq 3r_\delta$ as $\delta\rightarrow0$. Thus we may take $\delta\ll 1$ such that
\[
\sup_{r_\delta/2\leq |z|\leq 3r_\delta}\left|\Delta_{g_\delta}u_\delta(z)-f(z/r_\delta)\right|\leq\tau\leq 1.
\]
By the definition of $f$, we obtain
\begin{eqnarray*}
& & \Delta_{g_\delta}u_\delta\leq 1\ \ \ \text{on}\ \varpi^{-1}(B_{r_\delta}\setminus B_{r_\delta/2}),\\
& & \Delta_{g_\delta}u_\delta\leq -C_0+1\ \ \ \text{on}\ \varpi^{-1}(B_{2r_\delta}\setminus B_{r_\delta}),\\
& & \Delta_{g_\delta}u_\delta\leq \tau\ \ \ \text{on}\ \varpi^{-1}(B_{3r_\delta}\setminus B_{2r_\delta}).
\end{eqnarray*}

\item[$(4)$]
On $\varpi^{-1}(B_\rho\setminus B_{3r_\delta})$, since $g_\delta=g$, we have
\begin{eqnarray}\label{eq:Laplacian_case_4}
\Delta_{g_\delta}u_\delta(z)
&=& r_\delta^2v(z/r_\delta)\Delta_{g_\delta}\kappa(z)
+\kappa(z)\Delta_{g_\delta}\left(r_\delta^2v(z/r_\delta)\right)
+2r_\delta^2\mathrm{Re\,}\mathrm{tr}_{g_\delta}\left(i\partial\kappa\wedge\bar{\partial}_zv(z/r_\delta)\right)\notag\\
&=& r_\delta^2v(z/r_\delta)\Delta_g\kappa(z)
+\kappa(z)\Delta_{\widetilde{g}_\delta}v(w)|_{w=z/r_\delta}
+2r_\delta\mathrm{Re\,}\mathrm{tr}_{g}\left(i\partial\kappa\wedge\bar{\partial}_wv(z/r_\delta)\right).
\end{eqnarray}
By \eqref{eq:kappa_bdd},
\begin{equation}\label{eq:Laplacian_case_4_1}
\left|r_\delta^2v(z/r_\delta)\Delta_g\kappa(z)\right|\lesssim\frac{r_\delta^2}{\rho^2},\qquad \forall\,z\in B_\rho\setminus B_{3r_\delta}.
\end{equation}
Here and in what follows, the implicit constants are independent of $\rho$ and $\delta$.  Note that the coefficients of $\widetilde{g}_\delta$ are given by
\[
(\widetilde{g}_\delta)_{j\bar{k}}(w)=g_{j\bar{k}}(r_\delta w)=\frac{1}{2}\delta_{j\bar{k}}+\frac{\partial^2\varphi}{\partial z_j\partial\bar{z}_k}(r_\delta w),\ \ \ 3\leq |w|\leq \rho/r_\delta.
\]
Since $\varphi(z)=O(|z|^4)$ on $\varpi^{-1}(B_\rho)$, we have
\[
\sup_{3\leq |w|\leq \rho/r_\delta}\left|(\widetilde{g}_\delta)_{j\bar{k}}(w)-\frac{1}{2}\delta_{j\bar{k}}\right|=O(\rho^2),\qquad \rho\rightarrow0.
\]
This together with Lemma \ref{lm:v_bdd} yield
\[
\sup_{3\leq |w|\leq \rho/r_\delta}\left|\Delta_{\widetilde{g}_\delta}v-\Delta_{\mathrm{eucl}}v\right|
=O(\rho^2),\ \ \ \rho\rightarrow0.
\]
Since $\Delta_{\mathrm{eucl}}v(w)=f(w)=0$ when $|w|\geq3$, it follows that
\begin{equation}\label{eq:Laplacian_case_4_2}
\left|\kappa(z)\Delta_{\widetilde{g}_\delta}v(w)|_{w=z/r_\delta}\right|\lesssim\rho^2,\qquad \forall\,z\in B_\rho\setminus B_{3r_\delta}.
\end{equation}
Moreover, since $g^{j\bar{k}}=\delta_{j\bar{k}}+O(\rho^2)$ as $\rho\rightarrow0$, we have
\begin{eqnarray}\label{eq:Laplacian_case_4_3}
\left|2r_\delta\mathrm{Re\,}\mathrm{tr}_{g}\left(i\partial\kappa\wedge\bar{\partial}_wv(z/r_\delta)\right)\right|
&=& 2r_\delta\left|\mathrm{Re}\,\sum g^{j\bar{k}}\frac{\partial\kappa}{\partial z_j}\frac{\partial v}{\partial\bar{w}_k}(z/r_\delta)\right|\notag\\
&\lesssim& \frac{r_\delta}{\rho},\qquad \forall\,z\in B_\rho\setminus B_{3r_\delta}.
\end{eqnarray}
when $0<\delta\ll\rho\ll1$, in view of Lemma \ref{lm:v_bdd} and \eqref{eq:kappa_bdd}. It follows from \eqref{eq:Laplacian_case_4_1}, \eqref{eq:Laplacian_case_4_2}, \eqref{eq:Laplacian_case_4_3} and \eqref{eq:Laplacian_case_4} that
\[
\Delta_{g_\delta}u_\delta(z)\lesssim \rho^2+\frac{r_\delta^2}{\rho^2}+\frac{r_\delta}{\rho},\qquad \forall\,z\in B_\rho\setminus B_{3r_\delta}.
\]
As a consequence, we may choose $0<\delta\ll\rho\ll1$ such that
\[
\Delta_{g_\delta}u_\delta\leq\tau\ \ \ \text{on }\ \varpi^{-1}(B_\rho\setminus B_{3r_\delta}).
\]
\end{itemize}

\vspace{5mm}

Combining (1)$\sim$(4) with \eqref{eq:sc_glue}, we conclude that
\begin{equation}\label{eq:sc_raise}
S_C(g_\delta)-\Delta_{g_\delta}u_\delta\geq-1.
\end{equation}
Since $u_\delta\ge 0$, it follows from \eqref{eq:sc_raise} and \eqref{eq:sc_conformal} that
\[
S_C(h_\delta)=e^{-u_\delta/n}\left(S_C(g_\delta)-\Delta_{g_\delta} u_\delta\right)\ge -1.
\]

It remains to compare $\mathrm{vol}_{h_\delta}$ with $\mathrm{vol}_{g_\delta}$.
Since $u_\delta=0$ on $\widehat{M}\setminus\varpi^{-1}(B_\rho)$,
\begin{equation}\label{eq:volume_conformal_1}
\mathrm{vol}_{h_\delta}(\widehat{M}\setminus\varpi^{-1}(B_\rho))=\mathrm{vol}_{g_\delta}(\widehat{M}\setminus\varpi^{-1}(B_\rho)).
\end{equation}
By Lemma \ref{lm:v_bdd}, we have $|u_\delta|\leq Cr_\delta^2$, so that
\[
\mathrm{vol}_{h_\delta}(\varpi^{-1}(B_\rho))
=\int_{\varpi^{-1}(B_\rho)} e^{u_\delta}\,dV_{g_\delta}
\leq e^{Cr_\delta^2}\mathrm{vol}_{g_\delta}(\varpi^{-1}(B_\rho)),
\]
while \eqref{eq:volume_glue} implies
\[
\mathrm{vol}_{g_\delta}(\varpi^{-1}(B_\rho))\leq \mathrm{vol}_{g_\delta}(\widehat{M})\leq \mathrm{vol}_g(\widehat{M})+O(r_\delta^2),\qquad \delta\rightarrow0;
\]
in particular, $\mathrm{vol}_{g_\delta}(\varpi^{-1}(B_\rho))$ is bounded by a constant independent of $\rho$ and $\delta$ as $\delta\rightarrow0$. Thus
\begin{equation}\label{eq:volume_conformal_2}
\mathrm{vol}_{h_\delta}(\varpi^{-1}(B_\rho))=\mathrm{vol}_{g_\delta}(\varpi^{-1}(B_\rho))+O(r_\delta^2).
\end{equation}
It follows from  \eqref{eq:volume_conformal_1} and \eqref{eq:volume_conformal_2} that
\[
\mathrm{vol}_{h_\delta}(\widehat{M})=\mathrm{vol}_{g_\delta}(\widehat{M})+O(r_\delta^2).
\]
\end{proof}

By Proposition \ref{prop:conformal_raise} and \eqref{eq:volume_canvol_epsilon},  we obtain
\[
\mathrm{MinVol}_C(M)\leq \mathrm{vol}_{h_\delta}(\widehat{M})\leq \frac{(n\pi)^n}{n!}\mathrm{CanVol}(M)+4\varepsilon
\]
when $0<\delta\ll\rho\ll1$, while  Theorem \ref{th:Main0} gives
\[
\mathrm{MinVol}_C(M)\geq \frac{(n\pi)^n}{n!}\mathrm{CanVol}(M).
\]
The proof of Theorem \ref{th:blow_up} is complete.

\section{Failure of  regularization of $g_{KE}$  with  Ricci curvatures bounded}\label{sec:Ricci}

It is also natural to consider
\[
\mathcal{I}_R(M):=\inf_{g\in\mathcal{RM}(M)}\int_M|Ric(g)|^{n/2}dV_g
\]
for any compact Riemannian $n-$manifold $M$ (see LeBrun \cite{LeBrun01}). By \cite[Theorem 4.3]{LeBrun01}, if $M$ is a projective surface with $K_M$ big and nef and $M_k$ is the blow-up of $M$ at $k$ points, then
\begin{equation}\label{eq:Ricci_integral}
\mathcal{I}_R(M_k)\geq 8\pi^2\mathrm{CanVol}(M)+k\ \left(>8\pi^2\mathrm{CanVol}(M_k)\right)
\end{equation}
holds.

Since $K_{M_k}$ is big, there exists a singular K\"{a}hler-Einstein metric $g_{KE}$ which is smooth on a Zariski open subset $U$ of $M_k$. Based on \eqref{eq:Ricci_integral}, we have

\begin{proposition}\label{prop:approximating_Ricci}
There does not exist a sequence $\{g_j\}$ of smooth K\"{a}hler metrics on $M_k$ such that the following conditions hold:
\begin{itemize}
\item[$(1)$]
$\{g_j\}$ converges in the $C^\infty_{\mathrm{loc}}-$topology to $g_{KE}$ on $U$.

\item[$(2)$]
Both $dV_{g_j}$ and $|Ric(g_j)|$ are uniformly bounded on $M_k$.
\end{itemize}
\end{proposition}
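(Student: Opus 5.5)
Argue by contradiction: assume such a sequence $\{g_j\}$ exists and show that $\int_{M_k}|Ric(g_j)|^2dV_{g_j}$ (real dimension $4$, so the exponent is $n/2=2$ with $n$ the real dimension) converges to $8\pi^2\mathrm{CanVol}(M_k)$. This contradicts \eqref{eq:Ricci_integral}, because $\mathrm{CanVol}(M_k)=\mathrm{CanVol}(M)$ by bimeromorphic invariance \eqref{eq:canvol_bimeromorphic}.

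\emph{Step 1: the contribution from $U$.} On $U$ the limit metric is K\"ahler--Einstein with $Ric(\omega_{KE})=-\omega_{KE}$. The Riemannian Ricci tensor of a K\"ahler metric is $J$-invariant and corresponds to the Ricci form, so for $g_{KE}$ we get $Ric=-g_{KE}$ as a Riemannian tensor. Hence $|Ric(g_{KE})|^2=4$ pointwise on $U$ (real dimension $4$), and $\int_U|Ric(g_{KE})|^2dV_{KE}=4\,\mathrm{vol}_{KE}(U)$. Up to normalization conventions, this is the quantity $\frac{(2\pi)^2}{2!}\cdot 4\cdot\mathrm{CanVol}$-type expression. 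To get $\mathrm{vol}_{KE}(U)\le\frac{\pi^2}{2!}\mathrm{CanVol}(M_k)$, apply Theorem \ref{th:BergmanLower} to $g_{KE}$, which is allowed by Theorem \ref{th:ST}, since $(K_{M_k},(dV_{KE})^{-1})$ has positive curvature current. Then use Fatou's lemma exactly as in \eqref{eq:lower_CanVol_1}--\eqref{eq:lower_CanVol_2}. This gives $4\,\mathrm{vol}_{KE}(U)\le 2\pi^2\cdot 4\,\mathrm{CanVol}/... $; I would track constants carefully so that it reads $\int_U|Ric(g_{KE})|^2dV_{KE}\le 8\pi^2\mathrm{CanVol}(M_k)$, consistent with the normalization of \eqref{eq:Ricci_integral}.

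\emph{Step 2: the contribution near $E=M_k\setminus U$.} Fix a neighbourhood $E_\delta$ of the analytic set $E$ with small $g_0$-volume. Uniform boundedness of $dV_{g_j}$ (relative to a fixed smooth volume form) gives $\mathrm{vol}_{g_j}(E_\delta)<\delta$ for all $j$. Uniform boundedness of $|Ric(g_j)|$ then gives $\int_{E_\delta}|Ric(g_j)|^2dV_{g_j}\le C\delta$. On the compact set $M_k\setminus E_\delta\subset U$, $C^\infty$ convergence yields $\int_{M_k\setminus E_\delta}|Ric(g_j)|^2dV_{g_j}\to 4\,\mathrm{vol}_{KE}(M_k\setminus E_\delta)$. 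This mirrors \eqref{eq:I_M_small}--\eqref{eq:I_C_limit}.

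\emph{Step 3: conclusion.} Combining the two steps gives $\mathcal I_R(M_k)\le\limsup_j\int|Ric(g_j)|^2dV_{g_j}\le 8\pi^2\mathrm{CanVol}(M_k)+C\delta$. Letting $\delta\to0$ contradicts \eqref{eq:Ricci_integral}, since $8\pi^2\mathrm{CanVol}(M)+k>8\pi^2\mathrm{CanVol}(M_k)$ for $k\ge1$.

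The main obstacle is the bookkeeping of constants: matching the norm convention for $|Ric|$ in LeBrun's $\mathcal I_R$ with the normalization $\mathrm{vol}_{KE}\le\frac{\pi^n}{n!}\mathrm{CanVol}$. Only the sharp inequality $\int_U|Ric(g_{KE})|^2dV_{KE}\le 8\pi^2\mathrm{CanVol}$ matters, and I would verify it against LeBrun's equality case for K\"ahler--Einstein surfaces. A secondary point is that Theorem \ref{th:BergmanLower} requires projectivity. This holds because $K_{M_k}$ is big, by Siu's theorem.
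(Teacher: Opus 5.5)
Your proposal is correct and follows the paper's proof essentially step for step. You bound $\mathrm{vol}_{KE}(U)$ by the Bergman-kernel lower bound plus Fatou (the paper simply cites \eqref{eq:lower_CanVol_2}). You then split $M_k$ into a neighbourhood $E_\delta$ of $E$ with $\mathrm{vol}_{g_j}(E_\delta)<\delta$, where $|Ric(g_j)|$ is uniformly bounded, and its compact complement, where $|Ric(g_j)|^2 dV_{g_j}\to 4\,dV_{KE}$. Finally you contradict \eqref{eq:Ricci_integral} through its $+k$ term.

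On the constant you flagged: any bound $\int_U|Ric(g_{KE})|^2dV_{KE}\le 8\pi^2\mathrm{CanVol}(M_k)$ suffices. In this proposition the paper in fact uses $\mathrm{vol}_{KE}(M_k\setminus E)\le 2\pi^2\,\mathrm{CanVol}(M_k)=\frac{(2\pi)^2}{2!}\mathrm{CanVol}(M_k)$. That is the normalization consistent with $c_1(K_{M_k})=[\omega_{KE}/2\pi]$ and with LeBrun's equality case, which is exactly the check you planned.
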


\begin{proof}
Let $\omega_{KE}$ be the K\"{a}hler form of $g_{KE}$ on $U$. Note that $Ric(\omega_{KE})=-\omega_{KE}$ on $U$. By \eqref{eq:lower_CanVol_2}, we have
\[
\mathrm{vol}_{KE}(M_k\setminus E)\leq 2\pi^2\mathrm{CanVol}(M_k),
\]
where $E:=M_k\setminus U$. Suppose on the contrary that there exists a sequence $\{g_j\}$ of K\"ahler metrics on $M_k$ satisfying (1) and (2).  Then for any $\delta>0$, there exists a neighbourhood $E_\delta$ of $E$ such that
\begin{itemize}
\item[(a)]
$\mathrm{vol}_{g_j}(E_\delta)<\delta$ for all $j$,

\item[(b)]
$|Ric(g_j)|\leq 2(1+\delta)$ on $M_k\setminus E_\delta$ when $j\gg1$, and

\item[(c)]
$\mathrm{vol}_{g_j}(M_k\setminus E_\delta)\leq 2\pi^2\mathrm{CanVol}(M_k)+\delta$ when $j\gg1$.
\end{itemize}
Here (b) follows from the fact that $|Ric(\omega_{KE})|^2=|\omega_{KE}|^2=2$,  $|Ric(g_{KE})|^2=2|Ric(\omega_{KE})|^2=4$ and $g_j\rightarrow g_{KE}$ uniformly on $M_k\setminus E_\delta$.

 Therefore,
\[
\int_{M_k\setminus E_\delta}|Ric(g_j)|^2dV_{g_j}\leq 4(1+\delta)^2\mathrm{vol}_{g_j}(M_k\setminus E_\delta)=4(1+\delta)^2(2\pi^2\mathrm{CanVol}(M_k)+\delta).
\]
Moreover, since $|Ric(g_j)|\leq C$ for some constant $C$ independent of $j$, we have
\[
\int_{E_\delta}|Ric(g_j)|^2dV_{g_j}\leq C^2\mathrm{vol}_{g_j}(E_\delta)<C^2\delta.
\]
Thus
\[
\mathcal{I}_R(M_k)\leq \int_{M_k} |Ric(g_j)|^2dV_{g_j}< 4(1+\delta)^2(2\pi^2\mathrm{CanVol}(M_k)+\delta)+C^2\delta,
\]
which contradicts \eqref{eq:Ricci_integral} when $\delta\ll0$.
\end{proof}

{\bf Acknowledgements.} The first author is supported by National Natural Science Foundation of China, No. 12271101; the third author is supported by National Natural Science Foundation of China, No. 12471079.


\begin{thebibliography}{99}

\bibitem{Aubin} T.  Aubin,  {\it \'{E}quations du type Monge-Amp$\grave{e}$re sur les vari\'et\'es k\"ah\'eriennes compactes},  Bull.  Sci.  Math.  (2) {\bf 102} (1978),  no. 1,  63--95.

\bibitem{BergerBook} M. Berger, A Panoramic View of Riemannian Geometry,  Springer-Verlag, Berlin,  2003.

\bibitem{Berman} R. Berman,  {\it Bergman kernels and local holomorphic Morse inequalities},  Math.  Z.  {\bf 248}  
 (2004),  325--344.

\bibitem{BerndtssonBook} B. Berndtsson, An introduction to things $\bar{\partial}$, Analytic and algebraic geometry, 7--76. IAS/Park City Math. Ser., 17, American Mathematical Society, Providence, RI, 2010.

\bibitem{BCG} G.  Besson,  G.  Courtois and S.  Gallot,   {\it Entropies et rigidit\'es des espaces
localement sym\'etriques de courbure strictement n\'egative},  Geom. Funct.  Anal.  {\bf 5} (1995),  
731--799.

\bibitem{BEGZ10} S. Boucksom, P. Eyssidieux, V. Guedj and A. Zeriahi, {\it Monge-Amp\`{e}re equations in big cohomology classes}, Acta Math., {\bf 205} (2010), 199--262.

\bibitem{Cao} H.-D.  Cao,  {\it Deformation of K\"ahler metrics to K\"ahler-Einstein metrics on compact K\"ahler manifolds}, Invent.  Math.  {\bf 81} (1985),  359--372.

\bibitem{Catlin} D.  Catlin, {\it The Bergman kernel and a theorem of Tian}, Analysis and Geometry in Several Complex Variables (Katata, 1997), Trends Math.,  Birkh\"ahser Boston,  Boston,  MA 1999, 1--23.

\bibitem{CheegerRong} J.  Cheeger and X.  Rong,  {\it Existence of polarized F-structures on collapsed manifolds with bounded curvature and diameter},  Geom.  Funct.  Anal.  {\bf 6} (1996),  
411-429.

\bibitem{Cutkosky24} S. D. Cutkosky, {\it The Minkowski equality of big divisors}, Michigan Math. J. {\bf 74} (2024), 451--483.

\bibitem{DemaillyBook} J.-P. Demailly, Analytic Methods in Algebraic Geometry,  Survey of Modern Mathematics, 1,
International Press, Somerville, MA; Higher Educational Press, Beijing, 2012.

\bibitem{EGZ}  P.  Eyssidieux,  V.  Guedj and A.  Zeriahi, {\it Singular K\"ahler-Einstein metrics},  J.  Amer.  Math.  Soc.  {\bf 22} (2009),  no. 3,  607--639.

\bibitem{GromovVolume} M.  Gromov,  {\it Volumes and bounded cohomology},  Publ.  Math.  IHES {\bf 56} (1982),  5--99.

\bibitem{Gromov4} M. Gromov,  Four Lectures on Scalar Curvature,  2019.

\bibitem{GPSS} B. Guo, D. H. Phong, J. Song, J. Sturm, {\it Diameter estimates in K\"{a}hler geometry}, Comm. Pure Appl. Math. {\bf 77} (2024), 3520--3556.

\bibitem{HM} C. Hacon and J. McKernan, {\it Boundedness of pluricanonical maps of varieties of general type}, Invent. Math. {\bf 166} (2006), 1--25.

\bibitem{Holschbach10} A. Holschbach, {\it A Chebotarev-type density theorem for divisors on algebraic varieties}, arXiv:1006.2340v2. 

\bibitem{KobayashiBook} S. Kobayashi, Differential Geometry of Complex Vector Bundles, Publ. Math. Soc. Japan, vol. 15, Princeton University Press, Princeton, NJ, 1987.

\bibitem{LazarsfeldBook} R. Lazarsfeld, Positivity in algebraic geometry. I, Ergeb. Math. Grenzgeb. (3), vol. 48, Springer-Verlag, Berlin, 2004.

\bibitem{LeBrun88} C. LeBrun, {\it Counter-examples to the generalized positive action conjecture}, Comm. Math. Phys. {\bf 188} (1988), 591--596.

\bibitem{LeBrun96} C. LeBrun, {\it Four-manifolds without Einstein metrics}, Math. Res. Lett. {\bf 3} (1996), 133--147.

\bibitem{LeBrun99} C. LeBrun, {\it Kodaira dimension and the Yamabe problem}, Comm. Anal. Geom. {\bf 7} (1999), 133--156.

\bibitem{LeBrun01}C. LeBrun, {\it Ricci Curvature, Minimal Volumes, and Seiberg-Witten Theory}, Invent. Math. {\bf 145} (2001), 279--316.

\bibitem{LeBrun23} C. LeBrun, {\it The scalar curvature of $4$-manifolds}, in Perspective in scalar curvature. Vol. 1, World Sci. Publ., Hackensack, NJ, 2023, 643--707.

\bibitem{LY17} K. Liu and X. Yang, {\it Ricci curvatures on Hermitian manifolds}, Trans. Amer. Math. Soc. {\bf 369} (2017), 5157--5196.

\bibitem{Liu} G. Liu, {\it Compact K\"{a}hler manifolds with nonpositive bisectional curvature}, Geom. Funct. Anal. {\bf 24} (2014), 1591--1607.

\bibitem{MM} X.  Ma and G.  Marinescu,  Holomorphic Morse Inequalities and Bergman Kernels,  Progress in Math.  
 {\bf 254},  Birkh\"auser,  2006.

\bibitem{Rong} X.  Rong,  {\it The existence of polarized $F-$structures on volume collapsed $4-$manifolds},  Geom. Funct.  Anal.  {\bf 3} (1993),  474--501.

\bibitem{Simanca91} S. R. Simanca, {\it K\"{a}hler metrics of constant scalar curvature on bundles over $\mathbb{CP}^{n-1}$}, Math. Ann. {\bf 291} (1991), 239--246.

\bibitem{Siu84} Y. T. Siu, {\it A vanishing theorem for semipositive line bundles over non-K\"{a}hler manifolds}, J. Differential Geom. {\bf 19} (1984), 431--452.

\bibitem{SW} J.  Song and B. Weinkove,  Lecture notes on the K\"ahler-Ricci flow,  arXiv: 1212.3653v1. 

\bibitem{ST12} J. Song and G. Tian, {\it Canonical measures and K\"{a}hler-Ricci flow}, J. Amer. Math. Soc. {\bf 25} (2012), 303--353.

\bibitem{ST16} J. Song and G. Tian, {\it Bounding scalar curvature for global solutions of the K\"{a}hler-Ricci flow}, Amer. J. Math., {\bf 138} (2016), 683--695.

\bibitem{Szekelyhidi12} G. Sz\'{e}kelyhidi, {\it On blowing up extremal Kahler manifolds},
Duke Math. J. {\bf 161} (2012), 1411--1453.

\bibitem{SzekelyhidiBook} G. Sz\'{e}kelyhidi, An Introduction to extremal K\"{a}hler metrics, American Mathematical Society, Providence, RI, 2014.

\bibitem{Takayama} S. Takayama, {\it Pluricanonical systems on algebraic varieties of general type}, Invent. Math. {\bf 165} (2006), 551--587.

\bibitem{Tian} G.  Tian,  {\it On a set of polarized K\"ahler metrics on algebraic manifolds},  J.  Diff.  Geom.  {\bf 32}  (1990),  99--130.

\bibitem{TZ06} G. Tian and Z. Zhang, {\it On the K\"{a}hler-Ricci flow on projective manifolds of general type}, Chinese Ann. Math. Ser. B {\bf 27} (2006), 179--192.

\bibitem{Tsuji88} H. Tsuji, {\it Existence and degeneration of K\"{a}hler-Einstein metrics on minimal algebraic varieties of general type}, Math. Ann. {\bf 281} (1988), 123--133.

\bibitem{Tsuji92} H. Tsuji, {\it Analytic Zariski Decomposition}, Proc. of Japan Acad. {\bf 61} (1992), 161--163.

\bibitem{Ueno} K. Ueno,  Classification theory of algebraic varieties and compact complex spaces,  Lecture Notes in Math., Vol. 439, Springer-Verlag, Berlin-New York, 1975. 

\bibitem{WZ} H.  Wu and F.  Zheng,  {\it Compact K\"ahler manifolds with nonpositive bisectional curvature},  J.  Diff. Geom.  {\bf 61} (2002),  263--287.

\bibitem{Yang21} X. Yang, {\it RC-positivity, vanishing theorems and rigidity of holomorphic maps}, J. Inst. Math. Jussieu {\bf 20} (2021), 1023--1038.

\bibitem{Yau} S.-T. Yau,  {\it On the Ricci curvature of a compact K\"ahler manifold and the complex Monge-Amp$\grave{e}$re equation, I},  Comm.  Pure Appl.  Math.  {\bf 31} (1978),  339--411.

\bibitem{WZhang} W.  Zhang,   {\it Geometric structures,  Gromov norm and Kodaira dimensions},   Adv.  Math.  {\bf 308} (2017),   
 1--35. 

\bibitem{Zhang09} Z. Zhang, {\it Scalar curvature bound for K\"{a}hler-Ricci flows over minimal manifolds of general type}, Int. Math. Res. Not. IMRN {\bf 20} (2009), 3901--3912.

\bibitem{Zelditch} S.  Zelditch,  {\it Szeg\"o kernels and a theorem of Tian},  Internat.  Math.  Res.  Notices {\bf 6} (1998),  317--331.

\end{thebibliography}
\end{document}